\documentclass[11pt]{amsart}
\usepackage{amssymb,amsmath,amscd,latexsym,epsfig,color,hyperref,graphics}
\usepackage[all,cmtip]{xy}

\newtheorem{theorem}{Theorem}[section]
\newtheorem{proposition}[theorem]{Proposition}
\newtheorem{corollary}[theorem]{Corollary}

\newtheorem{lemma}[theorem]{Lemma}

\theoremstyle{definition}

\newtheorem{remark}[theorem]{Remark}
\newtheorem{example}[theorem]{Example}

\def\NN{\ensuremath{\mathbb{N}}}
\def\ZZ{\ensuremath{\mathbb{Z}}}
 
\def\RR{\ensuremath{\mathbb{R}}}

\newcommand{\PP}{{\mathbb P}}

\newcommand{\B}{{\mathcal B}}

\def\A{\ensuremath{\mathcal{A}}}

\def\xx{\ensuremath{{\bf{x}}}}

\def\proof{\smallskip\noindent {\it Proof: \ }}
\def\endproof{\hfill$\square$\medskip}

\newcommand{\comment}[1]{}

\newcommand{\bmat}[1]{\begin{bmatrix}#1\end{bmatrix}}

\newcommand{\XX}{\textbf{x}}
\DeclareMathOperator{\PGL}{PGL}

\DeclareMathOperator{\tin}{in}

\title{A Hilbert Scheme in Computer Vision}
\author{Chris Aholt, Bernd Sturmfels and Rekha Thomas}
\address{Chris Aholt, Mathematics, University of Washington, Seattle, WA 98195}
\email{aholtc@uw.edu}
\address{Bernd Sturmfels, Mathematics, Univ.~of California,
Berkeley, CA 94720}
\email{bernd@math.berkeley.edu}
\address{Rekha Thomas, Mathematics, University of Washington, Seattle, WA 98195}
\email{rrthomas@uw.edu}

\begin{document}

\begin{abstract}
Multiview geometry is the study of 
two-dimensional images of three-dimensional scenes, a foundational subject in computer vision.
We determine a universal Gr\"obner basis for the multiview ideal of $n$ generic cameras.
As the cameras move, the multiview varieties vary in a family of dimension $11n-15$.
This family is the distinguished component of a multigraded Hilbert scheme
with a unique Borel-fixed point.
We present a combinatorial study
of ideals lying on that Hilbert scheme.

\end{abstract}
\maketitle

\section{Introduction}

Computer vision is based on mathematical foundations known as
{\em multiview geometry} \cite{FL, Grosshans} or {\em epipolar geometry} \cite[\S 9]{HartleyZisserman}.
In that subject one studies the
space of pictures of three-dimensional objects seen from $n \geq 2$ cameras.
Each camera is represented by a $3 \times 4$-matrix $A_i$ of rank $3$. The matrix
specifies a linear projection from  $\PP^3$ to $\PP^2$, which is well-defined on
$\PP^3 \backslash \{f_i\}$, where the  focal point $f_i $ is represented by a
generator of  the kernel of~$A_i$.
   
   The space of pictures from the $n$ cameras is the image of the rational~map 
 \begin{equation}
 \label{eq:phiA}
\phi_A \,:\,\,\PP^3 \,\dashrightarrow\, (\PP^2)^n, \,\,\,\, \XX \,\mapsto \, (A_1 \XX, A_2 \XX, \ldots,A_n \XX).
\end{equation}
The closure of this image is an algebraic variety, denoted $V_A$
and called the {\em multiview variety} of the given $n$-tuple of $3 \times 4$-matrices $A = (A_1,A_2,\ldots,A_n)$.
In geometric language, the multiview variety $V_A$ is the blow-up of
$\PP^3$ at the cameras $f_1,\ldots,f_n$, and we here study 
 this threefold as a subvariety of  $(\PP^2)^n$.

The {\em multiview ideal} $J_A$ is the prime ideal of all polynomials that vanish on the
multiview variety $V_A$. It lives in a polynomial ring $K[x,y,z]$ in $3n$ unknowns
$(x_i,y_i,z_i)$, $ i = 1,2,\ldots,n $, that serve as coordinates on $(\PP^2)^n$.
In Section 2 we give a determinantal representation of $J_A$ for generic $A$,
and identify a universal Gr\"obner basis consisting of
multilinear polynomials of degree $2$, $3$ and $4$.
This extends previous results of
Heyden and {\AA}str{\"o}m \cite{HA}.

The multiview ideal $J_A$ has 
a distinguished initial monomial ideal $M_n$ that is independent
of $A$, provided the  configuration $A$ is generic.
Section 3 gives an explicit description of $M_n$ and shows
that it is the unique Borel-fixed ideal with its $\ZZ^n$-graded Hilbert function.
Following \cite{CS}, we introduce the multigraded
Hilbert scheme $\mathcal{H}_n$
which parametrizes $\ZZ^n$-homogeneous ideals in
$K[x,y,z]$ with the same Hilbert function as $M_n$.
We show in Section 6 that, for $n \geq 3$, $\mathcal{H}_n$ has a distinguished component
of dimension $11n-15$ which compactifies the space
of camera positions studied in computer vision.
For two cameras, that space
 is an irreducible cubic hypersurface in $ \mathcal{H}_2 \simeq  \PP^8$.

Section 4 concerns the case when $n \leq 4$ and
the focal points $f_i$ are among the coordinate points 
$(1{:}0{:}0{:}0), \ldots, (0{:}0{:}0{:}1)$. Here the multiview variety $V_A$
is a toric threefold, and its degenerations are parametrized by a certain
toric Hilbert scheme inside $\mathcal{H}_n$. Each initial monomial
ideal of the toric ideal $J_A$ corresponds to a three-dimensional mixed subdivision 
as seen in Figure~\ref{V3_J8_Blowup}.
A classification of such mixed subdivisions for $n=4$ is given in
Theorem~\ref{thm:1068}.

\begin{figure}
\includegraphics[width=0.44\linewidth]{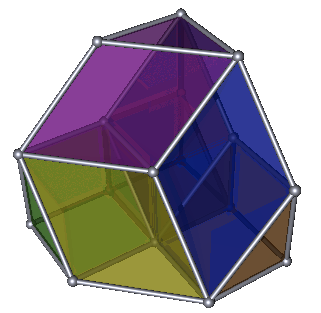} 
\includegraphics[width=0.55\linewidth]{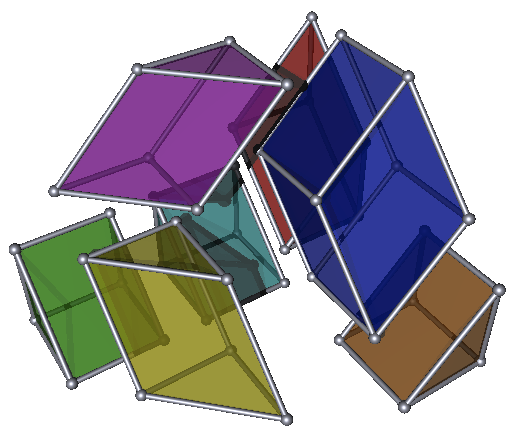}
\caption{A multiview variety $V_A$ for $n = 3$ cameras degenerates into
six copies of $\PP^1 {\times} \PP^2$ and one copy of $\PP^1 {\times} \PP^1 {\times} \PP^1$.}
\label{V3_J8_Blowup}
\end{figure}

In Section 5 we place our $n$ cameras on a line in $\PP^3$.
Moving them very close to each other on that line induces
a two-step degeneration of the form
\begin{equation}
\label{eq:TriBiMono}
 \hbox{trinomial ideal}  
\ \longrightarrow \
 \hbox{binomial ideal}
\  \longrightarrow \
 \hbox{monomial ideal}.
 \end{equation}
We present an in-depth combinatorial study of this curve of multiview ideals.

In Section 6 we finally define the Hilbert scheme $\mathcal{H}_n$,
and we construct the space of camera positions 
as a GIT quotient of a Grassmannian.
Our main result (Theorem \ref{thm:component})
states that the latter is an irreducible
component of~$\mathcal{H}_n$.
As a key step in the proof,
the tangent space of $\mathcal{H}_n$
at the monomial ideal in (\ref{eq:TriBiMono}) is computed 
and shown to have the correct dimension $11n-15$.
Thus, the curve (\ref{eq:TriBiMono})
consists of smooth points on the distinguished component of~$\mathcal{H}_n$.
For $n \geq 3$, our Hilbert scheme has multiple components.
This is seen from our classification of 
monomial ideals on $\mathcal{H}_3$, which relates closely to
\cite[\S 5]{CS}.

\bigskip

\noindent{\bf Acknowledgments}. Aholt and Thomas thank Fredrik Kahl for hosting them
at  Lund in February 2011 and pointing them to the work of Heyden and {\AA}str{\"o}m.
  They also thank Sameer Agarwal for introducing them to problems in computer vision 
  and continuing to advise them in this field.
Sturmfels thanks the Mittag-Leffler Institute, where this project started, 
and MATHEON Berlin for their hospitality.
All three authors were partially supported by the US National Science Foundation.
We are indebted to the makers of the software packages 
{\tt CaTS}, {\tt Gfan}, {\tt Macaulay2} and {\tt Sage}
which allowed explicit computations that were crucial in discovering our results.

\section{A universal Gr\"obner basis}

Let $K$ be any algebraically closed field, $n \geq 2$, and consider the map $\phi_A$
defined as in (\ref{eq:phiA})  by a tuple $A = (A_1,A_2,\ldots,A_n) $ of $3 \times 4$-matrices 
of rank $3$ with entries in $K$.
   The subvariety $V_A = \overline{{\rm image}(\phi_A)}$ of $(\PP^2)^n$
is the {\em multiview variety}, and its ideal $J_A \subset K[x,y,z]$
is the {\em multiview ideal}. Note that $J_A$ is prime because
its variety $V_A$ is the image
under $\phi_A$ of an irreducible variety.

We say that the camera configuration $A$ is {\em generic} if all $4 \times 4$-minors of the
$(4 \times 3n)$-matrix $ \bmat{ A_1^T \! & \! A_2^T \! & \!  \cdots \! & \! A_n^T }$
   are non-zero. In particular, if $A$ is generic then the focal points of the $n$ cameras are 
   pairwise distinct in $\PP^3$.
For any subset $\sigma = \{\sigma_1,\ldots,\sigma_s\} \subseteq [n]$ 	
we consider the $3s \times (s+4)$-matrix
$$ A_\sigma \,\,\, :=\,\,\, \begin{bmatrix}
A_{\sigma_1} & p_{\sigma_1} & \mathbf{0} & \cdots & \mathbf{0}\\
A_{\sigma_2} & \mathbf{0} & p_{\sigma_2} & \ddots & \mathbf{0} \\
\vdots& \vdots & \ddots &\ddots & \vdots \\
A_{\sigma_s} & \mathbf{0} & \cdots & \mathbf{0} &  p_{\sigma_s}
\end{bmatrix},$$
where $p_i:= \bmat{x_i \! &  \! y_i \! & \! z_i}^T$ for $i \in [n]$.
Assuming $s \geq 2$, each maximal minor of $A_\sigma$ is
a homogeneous polynomial of degree $s= |\sigma|$ that is linear
in $p_i$ for $i \in \sigma$.  Thus for $ s= 2,3,\ldots$
these polynomials are bilinear, trilinear, etc.
The matrix $A_\sigma$ and its maximal minors are considered frequently
in multiview geometry \cite{HartleyZisserman, HA}.
Recall that a {\em universal Gr\"obner basis} of an ideal is a subset that is a Gr\"obner basis of the ideal under all term orders. 
The following  is the main result in this section.

\begin{theorem}
\label{thm:UGB}
If $A$ is generic then
the maximal minors of the matrices $A_\sigma$ for $2 \leq |\sigma| \leq 4$ 
form a universal Gr\"obner basis of the multiview ideal $J_A$.
\end{theorem}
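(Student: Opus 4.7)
The strategy is to verify both requirements of a universal Gr\"obner basis: (a) every listed minor lies in $J_A$; and (b) for every term order $\prec$, the initial monomials of these minors generate $\tin_\prec(J_A)$. For (a), observe that if $\XX \in \PP^3$ and $p_i = A_i \XX$ for $i \in \sigma$, then the column vector $(\XX^\top, -1, -1, \ldots, -1)^\top \in K^{s+4}$ lies in the kernel of $A_\sigma$, so $\mathrm{rank}(A_\sigma) \le s+3$ at every point of $V_A$. Hence every $(s+4)\times(s+4)$ minor of $A_\sigma$ vanishes on $V_A$ and thus belongs to the prime ideal $J_A$.

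For (b), fix a term order $\prec$ and let $G$ denote the set of minors in the theorem. Since $A$ is generic, each coefficient of each term of each $g \in G$ is a nonzero polynomial in the entries of $A$; therefore $\tin_\prec(g)$ is a single monomial, multilinear in the blocks $\{x_i, y_i, z_i\}$ for $i \in \sigma$, of total degree $|\sigma| \le 4$. Let $M_\prec := \langle \tin_\prec(g) : g \in G \rangle$, so that $M_\prec \subseteq \tin_\prec(J_A)$. By flatness of Gr\"obner degeneration, $\tin_\prec(J_A)$ shares the $\ZZ^n$-graded Hilbert function of $J_A$, which is determined by the geometry of $V_A$ as the blow-up of $\PP^3$ at the focal points and is realized by the Borel-fixed monomial ideal $M_n$ of Section 3. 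Thus it suffices to prove that $M_\prec$ has the same $\ZZ^n$-graded Hilbert function as $M_n$.

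The main obstacle is this Hilbert-function match, which must hold uniformly in $\prec$. My plan is combinatorial, exploiting two structural features. First, because each $g \in G$ is multilinear in each block $\{x_i, y_i, z_i\}$, the monomial $\tin_\prec(g)$ depends only on how $\prec$ orders the three variables within each block; so only finitely many \emph{block patterns} of term orders need consideration, and these patterns are permuted by the natural $S_3^n$-action, which also acts on the space of generic camera configurations. Second, for one distinguished pattern (e.g.\ lex with $x_i > y_i > z_i$ for all $i$) one expects the initial ideal to coincide with $M_n$; verifying this amounts to an explicit computation of leading monomials of the minors, together with a standard-monomial count that matches the Hilbert function. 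Combining the explicit case with the symmetry argument, plus the multilinear degree bound $|\sigma| \le 4$ (which rules out generators of higher degrees being needed, since the syzygies among maximal minors of $A_\sigma$ for $|\sigma|\le 4$ account for any apparent higher-degree elements), should close the loop.
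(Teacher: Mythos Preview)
Your overall strategy---membership of the minors in $J_A$, reduction to finitely many block patterns via multilinearity, and the $S_3^n$ symmetry to transport one pattern to all others---mirrors the paper's proof exactly. The divergence, and the gap, is in how you handle the base case.

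You propose to verify, for the lex pattern, that the initial monomials generate an ideal $M_\prec$ with the same $\ZZ^n$-graded Hilbert function as $J_A$, appealing to ``the geometry of $V_A$ as the blow-up of $\PP^3$'' and to the ideal $M_n$ of Section~3. But this is where the real content lies, and your proposal does not supply it. Computing the multigraded Hilbert function of the blow-up embedded in $(\PP^2)^n$ directly from geometry is not routine, and invoking $M_n$ is circular: the paper establishes that $M_n$ has the Hilbert function of $J_A$ precisely by proving (Lemma~\ref{lem:generic2}) that $M_n = \tin_\prec(J_A)$ for lex, which is exactly the equality you are trying to prove.

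The paper's device for this step is to lift to the diagonal $\PP^3 \subset (\PP^3)^n$ by extending each $A_i$ to an invertible $4\times 4$ matrix $B_i$. Then $J_A = J^B \cap K[x,y,z]$ is an elimination ideal. For lexicographic order, taking initial ideals commutes with elimination, and the generic initial ideal of $J^B$ is the known Borel-fixed ideal $Z_{4,n}$ of Cartwright--Sturmfels. One reads off $\tin_\prec(J_A) = Z_{4,n} \cap K[x,y,z] = M_n$ directly, with no Hilbert-function bookkeeping at all. This elimination/diagonal trick is the missing mechanism in your outline. Your closing remark about syzygies and the bound $|\sigma|\le 4$ is also off target: that bound is not a syzygy phenomenon but simply a byproduct of the equality $\tin_\prec(J_A)=M_n$, whose generators happen to have degree at most~$4$.
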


The proof rests on a sequence of lemmas.
Here is the most basic~one.

\begin{lemma}
\label{lem:easyinclusion}
The maximal minors of $A_\sigma$ for $|\sigma| {\geq} 2$ lie in the prime ideal~$J_A$.
\end{lemma}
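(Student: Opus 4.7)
The plan is to exhibit an explicit vector in the kernel of $A_\sigma$ after substituting $p_i = A_i \XX$, which forces all maximal minors to vanish on (a Zariski-dense subset of) the multiview variety.

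First I would pin down which minors are ``maximal.'' Since $A_\sigma$ has size $3s \times (s+4)$ and $s+4 \leq 3s$ for $s \geq 2$, the maximal minors are the $(s+4) \times (s+4)$ minors obtained by choosing $s+4$ rows. Each such minor is a polynomial in $K[x,y,z]$ that is multihomogeneous (linear in the $p_i$ with $i \in \sigma$, and of total degree $s$). Because $J_A$ is prime and equal to the vanishing ideal of $V_A = \overline{\mathrm{image}(\phi_A)}$, it suffices to check that each maximal minor vanishes at every image point $\phi_A(\XX) = (A_1 \XX, \ldots, A_n \XX)$ with $\XX \in K^4$ general (so that $A_i \XX \neq 0$ for all $i$).

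The key step is the following kernel observation. Fix $\XX \in K^4$ and substitute $p_{\sigma_j} = A_{\sigma_j} \XX$ for $j = 1,\ldots,s$ in the last $s$ columns of $A_\sigma$. Consider the column vector
\[
v \,:=\, \bmat{\XX \\ -1 \\ -1 \\ \vdots \\ -1} \,\in\, K^{s+4}.
\]
The $j$-th block of three rows of $A_\sigma \cdot v$ equals $A_{\sigma_j} \XX - A_{\sigma_j}\XX = 0$, so $v$ lies in the kernel of the evaluated matrix. Hence the evaluated matrix has rank at most $s+3 < s+4$, and every $(s+4) \times (s+4)$ minor vanishes at $\phi_A(\XX)$.

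Since this holds for all $\XX$ in a dense open subset of $\PP^3$, each maximal minor vanishes on a dense subset of $V_A$, and by continuity on all of $V_A$; hence it lies in $J_A$. There is no real obstacle here beyond carefully tracking the block structure of $A_\sigma$ and the substitution $p_i \mapsto A_i \XX$; the argument does not use the genericity hypothesis on $A$ and works for arbitrary rank-three cameras.
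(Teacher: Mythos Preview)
Your argument is correct and is essentially the same as the paper's: both show that after substituting image points, the matrix $A_\sigma$ has linearly dependent columns, forcing all $(s+4)\times(s+4)$ minors to vanish. The only cosmetic difference is that the paper works with an arbitrary projective representative (so the kernel vector is $(q,-c_{\sigma_1},\ldots,-c_{\sigma_s})$ with $A_i q = c_i p_i$), whereas you fix the particular affine representative $p_i = A_i\XX$ and get the cleaner kernel vector $(\XX,-1,\ldots,-1)$; multihomogeneity makes these equivalent.
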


\begin{proof} If $ (p_1,\ldots,  p_n) \in (K^3)^n$ 
represents a point in ${\rm image}(\phi_A)$
then there exists a non-zero vector $q \in K^4$ 
and non-zero scalars $c_1,\ldots,c_n \in K$ such that
$A_i q = c_i  p_i$ for $i = 1,2,\ldots,n$.
This means that the columns of $A_\sigma$ are linearly dependent.
Since $A_\sigma$ has at least as many rows as columns, 
the maximal minors of $A_\sigma$ must vanish at every point $ p \in V_A$.
\end{proof}

Later we shall see that when $A$ is generic, $J_A$ has only one initial monomial ideal up to symmetry.
We now identify that ideal.
Let $M_n$ denote the  ideal in $K[x,y,z]$  generated by the ${n\choose 2}$ quadrics $x_ix_j$, the 
$3{n\choose 3}$ cubics $x_iy_jy_k$, 
and the ${n\choose 4}$ quartics $y_iy_jy_ky_l$, where $i,j,k,l$ runs over distinct indices in $[n]$.

We fix the lexicographic term order $\prec$ on $K[x,y,z]$ which is specified by
$x_1 {\succ} \cdots {\succ} x_n {\succ} 
y_1 {\succ} \cdots {\succ} y_n {\succ} z_1 {\succ} \cdots {\succ} z_n$.
Our goal is to prove that the initial monomial ideal $\tin_\prec(J_A)$ is equal to $M_n$.
We begin with the easier inclusion.

\newpage

\begin{lemma} \label{lem:generic1}
If $A$ is generic then $M_n \subseteq \tin_\prec(J_A) $.
\end{lemma}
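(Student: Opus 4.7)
The plan is to find, for each minimal generator $m$ of $M_n$, an explicit polynomial $f\in J_A$ whose initial monomial is $m$. Lemma~\ref{lem:easyinclusion} supplies a large family of elements of $J_A$---the maximal minors of the matrices $A_\sigma$ with $|\sigma|\geq 2$---and the strategy is to realize each generator of $M_n$ as $\tin_\prec$ of a carefully chosen such minor. The three types of generators correspond to three cases.

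For a quadric $x_i x_j$, I would take $f=\det A_{\{i,j\}}$. Laplace-expanding this $6\times 6$ determinant along its last two columns writes $f$ as a bilinear form $\sum_{a,b}\pm (p_i)_a (p_j)_b\cdot M_{ab}$, with each coefficient $M_{ab}$ a $4\times 4$ minor of the stacked matrix $[A_i;A_j]$, nonzero by genericity. Since $x_i x_j$ is lex-largest among the nine bilinear monomials in $(p_i,p_j)$, this gives $\tin_\prec(f)=x_i x_j$.

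For a cubic $x_i y_j y_k$ with $i,j,k$ distinct, I would take the $7\times 7$ minor of $A_{\{i,j,k\}}$ obtained by deleting the first row of the $j$-block of $A_\sigma$ and the first row of the $k$-block. Laplace expansion of this $7\times 7$ determinant along its last three columns shows that the resulting $f$ is supported on monomials in $\{x_i,y_i,z_i\}\cdot\{y_j,z_j\}\cdot\{y_k,z_k\}$: the rows that would have contributed $x_j$ or $x_k$ to their respective $p$-column are gone, so neither of those variables can appear. Among the twelve surviving monomials the lex-largest is $x_i y_j y_k$, whose coefficient is a $4\times 4$ minor of $[A_i;A_j;A_k]$, nonzero by genericity. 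Analogously, for a quartic $y_i y_j y_k y_l$ with distinct indices I would take the $8\times 8$ minor of $A_{\{i,j,k,l\}}$ obtained by deleting the first row of every one of the four blocks; now no $x$-variable appears at all, the lex-largest remaining monomial is $y_i y_j y_k y_l$, and its coefficient is once more a generically nonzero $4\times 4$ minor.

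The only real subtlety is the cubic case. Since $M_n$ is defined without any ordering on the triple $(i,j,k)$, the leading-term analysis has to be insensitive to how those indices sit in $[n]$: in an unconstrained trilinear form in $(p_i,p_j,p_k)$ the monomial $x_i y_j y_k$ is lex-largest only when $i<j,k$. What saves the argument is that the chosen row deletions eliminate $x_j$ and $x_k$ from $f$ outright, not merely from its lex-top slice, so every surviving monomial of $f$ has $x$-support contained in $\{x_i\}$; the remaining lex comparison then reduces to an elementary ranking of $y$- and $z$-exponents and picks out $x_i y_j y_k$ regardless of the relative positions of $i,j,k$ in $[n]$.
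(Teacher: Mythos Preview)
Your proof is correct and follows essentially the same approach as the paper: both select the identical maximal minors of $A_{\{i,j\}}$, $A_{\{i,j,k\}}$, and $A_{\{i,j,k,l\}}$ (deleting the first row of each block whose variable is to be a $y$) and identify the lex-leading term via the genericity of the resulting $4\times 4$ coefficient. Your explicit discussion of why the cubic case is insensitive to the relative ordering of $i,j,k$ is a useful clarification that the paper leaves implicit.
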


\begin{proof}
The generators of $M_n$ are the quadrics $x_ix_j$, the cubics $x_iy_jy_k$, and the quartics $y_iy_jy_ky_l$.
By Lemma \ref{lem:easyinclusion}, it suffices to show that these are the initial monomials
of maximal minors of $A_{\{ij\}}$, $A_{\{ijk\}}$ and $A_{\{ijkl\}}$ respectively.

For the quadrics this is easy. The matrix $A_{\{ij\}}$ is square and we have
\begin{equation}
\label{eq:zwei}
 {\rm det}(A_{\{ij\}}) 
\,\, = \,\, {\rm det} \begin{bmatrix}
A_i^1 & \! x_i & \! 0\\
A_i^2 & \! y_i & \! 0\\
A_i^3 & \! z_i & \! 0\\
A_j^1 & \! 0 & \! x_j\\
A_j^2 & \! 0 & \! y_j\\
A_j^3 & \! 0 & \! z_j
\end{bmatrix}  \,= \,\, 
 {\rm det} \! \begin{bmatrix} A_i^2 \\ A_i^3 \\ A_j^2 \\ A_j^3 \end{bmatrix}  \! x_i x_j \,+\, \hbox{lex.~lower terms}. 
\end{equation}
where $A_t^r$ is the $r$th row of $A_t$.
The coefficient
of $x_i x_j$ is non-zero because $A$ was assumed to be generic.
For the cubics, we consider the $9\times 7$-matrix
\begin{equation}
\label{eq:drei}
A_{\{ijk\}} \quad = \quad  \begin{bmatrix}
A_i & p_i & 0 & 0\\
A_j & 0 & p_j & 0\\
A_k & 0 & 0 & p_k
\end{bmatrix}.
\end{equation}
Now, $x_i y_j y_k$ is the lexicographic initial monomial of the
$7\times 7$-determinant formed by removing the fourth and seventh rows of $A_{\{ijk\}}$.
Here we are using that, by genericity, the vectors $A_i^2, A_i^3, A_j^3, A_k^3$
are linearly independent.

Finally, for the quartic monomial $y_iy_jy_ky_l$ we consider the $12\times 8$ matrix
\begin{equation}
\label{eq:vier}
A_{\{ijkl\}} \quad =  \quad \begin{bmatrix}
A_i & p_i & 0 & 0 & 0\\
A_j & 0 & p_j & 0 & 0\\
A_k & 0 & 0 & p_k & 0\\
A_l & 0 & 0 & 0 & p_l
\end{bmatrix}.
\end{equation}
Removing the first row from each of the four blocks, we obtain an $8 \times 8$-matrix
whose determinant has $y_iy_jy_ky_l$ as its lex.~initial monomial.
\end{proof}

The next step towards our proof  of Theorem \ref{thm:UGB} is to express
the multiview variety $V_A$ as a projection of a 
diagonal embedding of $\PP^3$. This will put us in a position to
utilize the results of Cartwright and Sturmfels in \cite{CS}.

We extend each camera matrix $A_i$ to an invertible
$4 \times 4$-matrix $B_i = \bmat{b_i\\A_i} $ by adding a row $b_i$ at the top.
Our diagonal embedding of $\PP^3$ is the map
\begin{equation}
 \label{eq:psiB}
 \psi_B: \PP^3 \,\to\, (\PP^3)^n, \,\,\,\, \XX \,\mapsto\, (B_1 \XX, B_2 \XX, \ldots,B_n \XX).
 \end{equation}

Let $V^B := {\rm image}(\psi_B) \subset (\PP^3)^n$ and $J^B \subset K[w,x,y,z]$ its prime ideal. 
Here $(w_i:x_i:y_i:z_i)$ are coordinates on the $i$th copy of $\PP^3$ and $(w,x,y,z)$ are
coordinates on $(\PP^3)^n$. The ideal $J^B$ is generated by the $2 \times 2$-minors of
\begin{equation} \label{eq:inverseB}
 \left[
 B_1^{-1} \! \begin{bmatrix} w_1 \\ x_1 \\ y_1 \\ z_1 \end{bmatrix} \,\,
B_2^{-1} \! \begin{bmatrix} w_2 \\ x_2 \\ y_2 \\ z_2 \end{bmatrix} \, \cdots \,\,\,
B_n^{-1} \! \begin{bmatrix} w_n \\ x_n \\ y_n \\ z_n \end{bmatrix} \,\right] .
\end{equation}
This is a  $4 \times n$-matrix.
Now consider the coordinate projection
$$ \pi \,:\, (\PP^3)^n \dashrightarrow (\PP^2)^n \, , \,\,\,
(w_i:x_i:y_i:z_i) \mapsto (x_i:y_i:z_i) \textup{ for }  i=1,\ldots, n. $$
The composition $\pi \circ \psi_B$ is a rational map, and it coincides with $\phi_A$ 
on its domain of definition $\PP^3 \backslash \{f_1, \ldots, f_n \}$.
Therefore, $V_A = \overline{\pi(V^B)}$ and 
\begin{equation}
\label{eq:elimideal}
 J_A \,\, =  \,\, J^B \cap K[x,y,z] . 
 \end{equation}

The polynomial ring $K[w,x,y,z]$ admits the natural $\mathbb Z^n$-grading $\deg(w_i) = \deg(x_i) = \deg(y_i) = \deg(z_i) = e_i$ where $e_i$ is the standard unit vector in $\RR^n$. Under this grading, $K[w,x,y,z]/J^B$ has the multigraded Hilbert function
$$ \mathbb N^n \to \mathbb N, \,\,\,(u_1, \ldots, u_n) \mapsto \left( \begin{array}{c} 
u_1+\cdots+u_n+3 \\ 3 \end{array} \right).$$
The multigraded Hilbert scheme $H_{4,n}$ which parametrizes $\ZZ^n$-homogeneous ideals 
in $K[w,x,y,z]$ with that Hilbert function was studied in \cite{CS}. 
More generally, the multigraded Hilbert scheme $H_{d,n}$ represents
degenerations of the diagonal $\PP^{d-1}$ in $(\PP^{d-1})^n$ for any $d$ and $n$.
For the general definition  of
multigraded Hilbert schemes see \cite{HaimanSturmfels}.
  It was shown in \cite{CS} that $H_{d,n}$ has a unique Borel-fixed ideal $Z_{d,n}$.
  Here {\em Borel-fixed} means that $Z_{d,n}$ is stable
    under the action of $\B^n$ where $\B$ is the group of
 lower triangular matrices in ${\PGL}(d,K)$.
   Here is what we shall need
  about the monomial ideal $Z_{4,n}$.
  
  \begin{lemma} \label{lem:generators of Z}  {\rm (Cartwright-Sturmfels \cite[\S 2]{CS} and 
  Conca \cite[\S 5]{Conca})}
\begin{enumerate}
\item The unique Borel-fixed monomial ideal $Z_{4,n}$ on $H_{4,n}$ is generated by the following 
monomials where $i,j,k,l$ are distinct indices in $[n]$:
\begin{center}
$\begin{array}{l}
w_iw_j, \,w_ix_j,\, w_iy_j, \,x_ix_j,\,\,
\,x_iy_jy_k, \,\,
y_iy_jy_ky_l.
\end{array}$
\end{center}
\item This ideal $Z_{4,n}$ is the lexicographic 
 initial ideal of $J^B$ when $B$ is sufficiently generic. The lexicographic order here is $w \succ x \succ y \succ z$ with each block ordered lexicographically in increasing order of index.
\end{enumerate}
\end{lemma}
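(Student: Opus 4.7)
The plan is to verify both parts relatively directly, leveraging the uniqueness of the Borel-fixed point on $H_{4,n}$ established in \cite{CS}.

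For part (1), let $I$ denote the monomial ideal with the listed generators. First I would verify that $I$ is Borel-fixed under $\B^n$. Combinatorially, since $\B$ is the lower triangular subgroup of $\PGL(4,K)$ acting separately on each block $\{w_i,x_i,y_i,z_i\}$, this reduces to checking that for every generator $m$ of $I$ and every variable $v_i \in \{x_i,y_i,z_i\}$ dividing $m$, replacing $v_i$ by any higher variable within the $i$th block (under $w \succ x \succ y \succ z$) yields a monomial in $I$. This is a short case analysis on the six families of generators; for example, promoting $y_l \mapsto x_l$ in $y_iy_jy_ky_l$ yields $x_ly_iy_jy_k$, divisible by the generator $x_ly_iy_j$, and promoting $y_l \mapsto w_l$ yields $w_ly_iy_jy_k$, divisible by $w_ly_i$. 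The remaining promotions are absorbed in exactly the same way.

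Next I would compute the $\ZZ^n$-graded Hilbert function of $K[w,x,y,z]/I$ in multidegree $(u_1,\ldots,u_n)$. A standard monomial $\prod w_i^{a_i}x_i^{b_i}y_i^{c_i}z_i^{d_i}$ with $a_i+b_i+c_i+d_i = u_i$ must satisfy: $a_i>0$ for at most one index $i$, and when it does no other index has $b_j>0$ or $c_j>0$; at most one index with $b_i>0$, whose presence restricts the $y$-support to at most one other index; the $y$-support alone of size at most three; and no constraint on the $z_i$. A direct count (or a bijection with monomials of total degree $u_1+\cdots+u_n$ in four variables) shows this equals $\binom{u_1+\cdots+u_n+3}{3}$, matching the Hilbert function of $J^B$ recorded just before the lemma. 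Uniqueness of the Borel-fixed point on $H_{4,n}$, \cite[\S 2]{CS}, then forces $I = Z_{4,n}$.

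For part (2), write $J^B = g_B \cdot I_\Delta$, where $I_\Delta$ is the ideal of the small diagonal $\Delta \subset (\PP^3)^n$ and $g_B \in \GL_4^n$ is the block change of coordinates $p_i' \mapsto B_i p_i'$. For generic $B$, $g_B$ is a generic element of $\GL_4^n$, so by the standard generic initial ideal theorem $\tin_\prec(J^B) = \gin_\prec(I_\Delta)$ is Borel-fixed. Since the initial ideal has the same $\ZZ^n$-graded Hilbert function as $J^B$, part (1) forces $\tin_\prec(J^B) = Z_{4,n}$.

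The main obstacle is the Hilbert-function count in part (1); the Borel-fixedness check and the identification of $\tin_\prec(J^B)$ as a generic initial ideal are mechanical. If the direct enumeration becomes unwieldy, one can instead follow the inductive argument in \cite[\S 2]{CS}, stratifying standard monomials by the index of their lowest-index support block and reducing to the case $n-1$.
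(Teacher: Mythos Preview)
The paper does not prove this lemma where it is stated; it is imported wholesale from \cite[\S 2]{CS} and \cite[\S 5]{Conca}. A partial argument for part~(2) does appear later, inside the proof of Lemma~\ref{lem:generic2}: there the authors observe that the listed generators of $Z_{4,n}$ arise as $\prec$-leading terms of maximal minors of the matrices $B_\sigma$, giving $Z_{4,n}\subseteq\tin_\prec(J^B)$, and then conclude equality because $\tin_\prec(J^B)$ is a multigraded generic initial ideal, hence Borel-fixed, so the uniqueness theorem of \cite{CS} forces $\tin_\prec(J^B)=Z_{4,n}$. That is exactly your argument for part~(2), so on that half you and the paper agree.

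Your plan for part~(1) is structurally sound, and the Borel-fixedness check is routine as you say. The one step you leave open is the Hilbert-function count: the claimed ``bijection with monomials of total degree $u_1+\cdots+u_n$ in four variables'' is asserted but not exhibited, and it is not entirely obvious how to write one down. The cited sources avoid a bare enumeration. Conca gets the Hilbert function of $Z_{4,n}$ as a byproduct of the Gr\"obner basis in part~(2), and \cite{CS} argues via the multidegree and prime decomposition rather than a direct standard-monomial count (the same template this paper reproduces for $M_n$ in Proposition~\ref{prop:prime_decomposition} and the uniqueness proof that follows it). Either of those routes, or your stated fallback to the inductive argument of \cite[\S 2]{CS}, would close the gap cleanly; if you want a fully self-contained argument, it is easier to establish part~(2) first and read off the Hilbert function, rather than the order you propose.
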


Using these results, it was deduced in \cite{CS} that all ideals on $H_{4,n}$ are radical and Cohen-Macaulay, and that $H_{4,n}$ is connected. We now use this distinguished Borel-fixed ideal $Z_{4,n}$ to 
prove the equality in Lemma \ref{lem:generic1}.

\begin{lemma} \label{lem:generic2}
If $A$ is generic then $M_n = \tin_\prec(J_A) $.
\end{lemma}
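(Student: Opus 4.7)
The plan is to deduce the reverse inclusion $\tin_\prec(J_A) \subseteq M_n$ from the Cartwright–Sturmfels initial ideal of $J^B$ via elimination, using the identity $J_A = J^B \cap K[x,y,z]$ from (\ref{eq:elimideal}).

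First, since $A$ is generic, I can choose the added top rows $b_i$ so that the extension $B = (B_1,\ldots,B_n)$ is sufficiently generic in the sense required by Lemma \ref{lem:generators of Z}(b); this is possible because the conditions are open. Extend $\prec$ to the lexicographic order $\prec'$ on $K[w,x,y,z]$ with $w \succ x \succ y \succ z$ and each block ordered by increasing index, exactly as in Lemma \ref{lem:generators of Z}(b). Then $\tin_{\prec'}(J^B) = Z_{4,n}$.

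Next, I would invoke the Gr\"obner basis elimination theorem. Since every monomial involving some $w_i$ is $\prec'$-larger than every monomial in $K[x,y,z]$, the order $\prec'$ is an elimination order for the $w$-variables. Hence if $G$ is a reduced Gr\"obner basis of $J^B$ with respect to $\prec'$, then $G \cap K[x,y,z]$ is a Gr\"obner basis of $J^B \cap K[x,y,z] = J_A$ with respect to $\prec$. In an elimination order, a polynomial whose leading term avoids $w$ must have all of its terms avoid $w$; consequently
$$\tin_\prec(J_A) \;=\; \tin_{\prec'}(J^B) \cap K[x,y,z] \;=\; Z_{4,n} \cap K[x,y,z].$$

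Finally, I would read off the intersection $Z_{4,n} \cap K[x,y,z]$ from the explicit generators listed in Lemma \ref{lem:generators of Z}(a). A monomial in $Z_{4,n}$ is divisible by one of its generators, and if it has no $w_i$ then that generator must also be $w$-free; the $w$-free generators are precisely $x_ix_j$, $x_iy_jy_k$, and $y_iy_jy_ky_l$, which generate $M_n$. Therefore $\tin_\prec(J_A) \subseteq M_n$, and combined with Lemma \ref{lem:generic1} this yields the desired equality.

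The only step that needs care is the reduction from ideals to their initial monomial ideals under elimination; everything else is bookkeeping with the Cartwright–Sturmfels generators. There is no serious obstacle, since the elimination theorem applies verbatim once $\prec'$ is recognized as an elimination order for $w$.
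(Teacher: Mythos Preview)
Your proof is correct and follows essentially the same route as the paper: both arguments extend $\prec$ to the lex order on $K[w,x,y,z]$, invoke Lemma~\ref{lem:generators of Z} to identify $\tin_{\prec'}(J^B)=Z_{4,n}$, use the elimination property of lex (equivalently, that taking initial ideals commutes with intersecting with $K[x,y,z]$) to get $\tin_\prec(J_A)=Z_{4,n}\cap K[x,y,z]$, and then read off $M_n$ from the $w$-free generators of $Z_{4,n}$. The paper is slightly more explicit about why generic $A$ admits an extension to a sufficiently generic $B$ (it names the condition that all $4\times 4$-minors of $[B_1^T\cdots B_n^T]$ be nonzero and checks it is achievable), whereas you appeal only to openness; you might add a line noting that for fixed generic $A$ this open condition on the $b_i$ is nonempty.
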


\begin{proof}
We fix the lexicographic term order $\prec$ on $K[w,x,y,z]$
and its restriction to $K[x,y,z]$.  Lemma \ref{lem:generators of Z}  (1)
shows that $\,M_n = Z_{4,n} \cap K[x,y,z] $.
 Lemma \ref{lem:generators of Z}  (2) states that $Z_{4,n} = \tin_{\prec}(J^B)$ when $B$ is generic.
   The lexicographic order has the important property that it allows the operations of taking initial ideals and intersections to commute \cite[Chapter 3]{CLO}. Therefore,
\begin{align*}
\tin_{\prec}(J_A) &\,=\, \tin_{\prec}(J^B\cap K[x,y,z]) & \\
&\,=\, \tin_{\prec}(J^B)\cap K[x,y,z] \\
&\,=\, Z_{4,n} \cap K[x,y,z] \,\,= \,\, M_n.
\end{align*}
This identity is valid whenever the  conclusion of
Lemma \ref{lem:generators of Z}  (2) is true.
We claim that, for this to hold, the appropriate genericity notion for $B$  is 
 that all $4 \times 4$-minors of the
$(4 \times 4n)$-matrix $ \bmat{ B_1^T \! & \! B_2^T \! & \!  \cdots \! & \! B_n^T }$
   are non-zero. Indeed, under this hypothesis, the maximal minors
   of the $4s \times (s+4)$-matrix
$$ \quad B_\sigma \,\,\, :=\,\,\, \begin{bmatrix}
B_{\sigma_1} & \tilde p_{\sigma_1} & \mathbf{0} & \cdots & \mathbf{0}\\
B_{\sigma_2} & \mathbf{0} & \tilde p_{\sigma_2} & \ddots & \mathbf{0} \\
\vdots& \vdots & \ddots &\ddots & \vdots \\
B_{\sigma_s} & \mathbf{0} & \cdots & \mathbf{0} & \tilde p_{\sigma_s}
\end{bmatrix}\! ,\, \hbox{where $\tilde p_i:= \bmat{w_i \!\! &  \!\! x_i \!\! &  \!\! y_i \!\! & \!\! z_i }^{\! T}$ for $i \in [n]$,}
$$
have non-vanishing leading coefficients. We see that $Z_{4,n} \subseteq \tin_{\prec}(J^B)$
by reasoning akin to that in the proof of Lemma \ref{lem:generic1}. The equality
$Z_{4,n}=\tin_{\prec}(J^B)$ is then immediate since
$Z_{4,n}$ is the generic initial ideal of $J^B$.
Hence, for any generic camera positions $A$, we can add a row to $A_i$ and
get $B_i$ that are ``sufficiently generic'' for Lemma \ref{lem:generators of Z}  (2).
This completes the proof.
~\end{proof}

\smallskip

\noindent
{\em Proof of Theorem \ref{thm:UGB}:}
Lemma~\ref{lem:generic2} and the proof of Lemma~\ref{lem:generic1} show 
that the maximal minors of the matrices $A_\sigma$ for $2 \leq |\sigma| \leq 4$
are a Gr\"obner basis of $J_A$ for the lexicographic term order. 
Each polynomial in that Gr\"obner basis is multilinear,
thus the initial monomials remain the same for any
term order satisfying $x_i \succ y_i \succ z_i$ for $i  = 1,2,\ldots,n$.
So, the minors form a Gr\"obner basis for that term order.
The set of minors is invariant under permuting 
$\{x_i,y_i,z_i\}$ for each $i$. 
Moreover, the genericity of $A$ implies that every monomial 
which can possibly appear in the support of a minor does so.
Hence, these minors form a universal Gr\"obner basis of $J_A$.
\qed

\begin{remark}
Computer vision experts have known for a long time that multiview 
varieties $V_A$ are defined set-theoretically by the above multilinear constraints of degree at most $4$.
We refer to work of Heyden and {\AA}str{\"o}m \cite{HA, Heyden}.
What is new here is that these constraints define $V_A$
in the strongest possible sense: they form a universal Gr\"obner basis
for the prime ideal $J_A$.
\end{remark}

The $n$ cameras are in {\em linearly general position} if no four focal points are coplanar and no three are collinear.
While the number of multilinear polynomials in our lex Gr\"obner basis of $J_A$ is
$\,\binom{n}{2} + 3 \binom{n}{3} + \binom{n}{4}$, far
fewer suffice to generate the ideal $J_A$ when $A$ is in linearly general position.

\begin{corollary}
If $A$ is in linearly general position then the ideal $J_A$ is minimally generated by $\,\binom{n}{2}$  bilinear and $\binom{n}{3}$ trilinear polynomials. 
\end{corollary}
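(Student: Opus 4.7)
By Theorem \ref{thm:UGB}, $J_A$ is generated by the $\binom{n}{2}$ bilinear minors of $A_{\{ij\}}$, the $3\binom{n}{3}$ trilinear minors of $A_{\{ijk\}}$, and the $\binom{n}{4}$ quartic minors of $A_{\{ijkl\}}$. My plan is to keep all $\binom{n}{2}$ bilinears together with one trilinear per triple $\{i,j,k\}$ (for instance the minor $T_{ijk}$ whose lex-leading monomial is $x_iy_jy_k$) and show that, under linearly general position, this reduced sublist still generates $J_A$ and is minimal. Because $J_A$ is multigraded and its universal Gr\"obner basis lives in multidegrees of support size at most $4$, every minimal generator of $J_A$ has such a multidegree; minimality of the claimed generating set thus reduces to proving $\beta_{1,e_i+e_j}(J_A) = \beta_{1,e_i+e_j+e_k}(J_A) = 1$ and $\beta_{1,e_i+e_j+e_k+e_l}(J_A) = 0$.

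The main tool is the multigraded Hilbert function of $K[x,y,z]/J_A$. Because $V_A$ is the blow-up of $\PP^3$ at the $n$ focal points, and its Hilbert function in a multidegree supported on $S \subseteq [n]$ coincides with that of the $|S|$-camera multiview ideal, I may reduce to $|S| \leq 4$. Using the identification $(K[x,y,z]/J_A)_\uu \cong H^0\bigl(\PP^3,\, \mathcal{I}_{\{f_i \,:\, u_i \geq 1\}}(|\uu|)\bigr)$, I obtain $\dim(J_A)_{e_i+e_j} = 9 - 8 = 1$ (two distinct points impose independent conditions on quadrics), $\dim(J_A)_{e_i+e_j+e_k} = 27 - 17 = 10$ (three non-collinear points impose independent conditions on cubics), and $\dim(J_A)_{e_i+e_j+e_k+e_l} = 81 - 31 = 50$ (four non-coplanar points on quartics). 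Meanwhile, the bilinears $b_{ij}, b_{ik}, b_{jk}$ multiplied by linear forms in the complementary variable contribute a subspace of $(J_A)_{e_i+e_j+e_k}$ of dimension exactly $9$, leaving a single trilinear generator per triple; and in the $4$-subset degree the bilinears together with the chosen trilinears fill all $50$ dimensions of $(J_A)_{e_i+e_j+e_k+e_l}$, rendering the quartic minors redundant.

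The main obstacle is verifying that $b_{ij}L_k + b_{ik}L_j + b_{jk}L_i$ has dimension exactly $9$ inside $K[x,y,z]_{e_i+e_j+e_k}$, and the analogous saturation statement in multidegree $e_i+e_j+e_k+e_l$. A nontrivial relation $b_{ij}\ell_k + b_{ik}\ell_j + b_{jk}\ell_i = 0$ in the polynomial ring can be unpacked by writing $b_{ij} = p_i^{\top} F_{ij}\, p_j$ for the fundamental matrix $F_{ij}$ attached to the pair $(A_i, A_j)$, and matching monomial coefficients; the resulting linear system on the entries of the $\ell_i$'s admits a nonzero solution precisely when the epipolar geometry degenerates, i.e.\ when the focal points $f_i, f_j, f_k$ are collinear in $\PP^3$, which is ruled out by linearly general position. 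An analogous syzygy analysis in the $4$-camera case terminates with the no-four-coplanar assumption. With these linear-independence statements in hand, the Hilbert function computation yields the required Betti numbers, proving both generation by $\binom{n}{2}+\binom{n}{3}$ polynomials and minimality of this generating set.
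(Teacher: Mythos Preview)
Your approach is sound but genuinely different from the paper's. The paper handles $n \le 4$ by transforming any configuration in linearly general position into the toric configuration of Section~4 (Remark~\ref{rmk:important}), where Proposition~\ref{prop:toricideal} already exhibits the six-quadric, four-cubic minimal generating set; for $n \ge 5$ it then observes that the redundancy of the extra trilinear minors and of all quartic minors is a statement about at most four cameras at a time, so it reduces to the settled small cases. Your route instead computes multigraded Betti numbers directly, using the blow-up interpretation to obtain $\dim(J_A)_{e_\sigma}$ and a syzygy count to isolate the number of new generators in each squarefree multidegree. This has the virtue of making visible exactly where ``no three collinear'' and ``no four coplanar'' enter, namely as the non-degeneracy conditions in the syzygy analysis, whereas the paper's toric reduction absorbs both hypotheses into a single coordinate change.

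Two places in your plan need more care. First, Theorem~\ref{thm:UGB} is stated for \emph{generic} $A$, which is strictly stronger than linearly general position; to invoke it you should remark (as in the first paragraph of Section~3) that a suitable element of $G^n$ carries $A$ to a generic configuration without moving the focal points. Second, and more substantively, the four-camera step is only asserted: you claim that the six bilinears and four chosen trilinears fill all $50$ dimensions of $(J_A)_{e_i+e_j+e_k+e_l}$, but the bookkeeping here (with $54+12$ spanning vectors and a $16$-dimensional space of relations to identify) is considerably heavier than in the triple case, and you have not carried it out or explained why the obstruction is governed precisely by coplanarity. The paper sidesteps this entirely by reducing to one explicit toric computation; if you want to avoid that shortcut, you should either perform the four-camera linear-algebra check directly or argue that the statement is $\PGL(4,K)\times G^4$-invariant so that verifying a single instance suffices.
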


\begin{proof}
This can be shown for $n \leq 4$ by a direct calculation.
Alternatively, these small cases are covered by transforming to the toric ideals
in Section 4.
First map the focal points of the cameras to the 
torus fixed focal points of the toric case, followed by multiplying each $A_i$ by a suitable 
$g_i \in \PGL(3,K)$.

Now let $n \geq 5$.
For any three cameras $i, j, k$, 
the maximal minors of (\ref{eq:drei}) are generated
by only one such maximal minor modulo
the three bilinear polynomials (\ref{eq:zwei}).
Likewise, for any four cameras $i$, $j$, $k$ and $l$,
the maximal minors of (\ref{eq:vier})
are generated by the trilinear and bilinear polynomials.
This implies that the resulting
$\binom{n}{2} + \binom{n}{3}$ polynomials generate $J_A$,
and, by restricting to two or three cameras, we see that they
minimally generate.
\end{proof}

\section{The Generic Initial Ideal}

We now focus on  combinatorial properties of our special monomial ideal
$$ M_n \quad = \quad \bigl\langle \,
 x_ix_j, \, x_iy_jy_k, \,y_iy_jy_ky_l \,\,:\,\, \forall \,\,i,j,k,l  \in [n] \,\, \textup{distinct}\bigr\rangle. $$
We refer to $M_n$ as the {\em generic initial ideal} in multiview geometry because it is the lex initial ideal of
any multiview ideal $J_A$ after a generic coordinate change 
via the group $G^n$ where $G = {\PGL}(3,K)$. Indeed, consider {\bf any} rank $3$ matrices
$A_1,A_2, \ldots,A_n \in K^{3 \times 4}$ with
pairwise distinct kernels $K \{f_i\}$. If $g = (g_1,g_2,\ldots,g_n) $ is
generic in $G^n$ then $g \circ A$ is generic in the sense that
all $4 \times 4$-minors of the matrix $ \bmat{ (g_1 A_1)^T \! \! & \!\! (g_2 A_2)^T \!\! & \!\!  \cdots \!\! & \!\! (g_n A_n)^T }$
are non-zero.
Thus, by the results of Section 2, $M_n$ is the initial ideal of $J_{g \circ A}$, or, using standard
commutative algebra lingo, $M_n$ is the generic initial ideal of $J_A$.

Since $M_n$ is a squarefree monomial ideal, it is radical. Hence $M_n$ is the intersection of its minimal primes,
which are generated by subsets of the variables $x_i$ and $y_j$.
We begin by computing this prime decomposition.

\begin{proposition} \label{prop:prime_decomposition}
The generic initial ideal $M_n$ is the irredundant intersection of 
$\binom{n}{3} + 2 \binom{n}{2}$ monomial primes. These are the monomial primes
$P_{ijk}$ and $Q_{ij}\subseteq K[x,y,z]$ defined below for any distinct indices $i,j,k\in[n]$:
\begin{itemize}
\item $P_{ijk}$ is generated by $x_1,\dots,x_n$  and all $y_l$ with $l\not\in\{i,j,k\}$,
\item $Q_{ij}$ is generated by all $x_l$ for $l\ne i$ and $y_l$ for $l\not\in\{i,j\}$. 
\end{itemize}
\end{proposition}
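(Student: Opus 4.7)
The plan is to exploit the fact that $M_n$ is a squarefree monomial ideal, hence radical, so $M_n$ equals the intersection of its minimal primes. For squarefree monomial ideals these minimal primes are exactly the ideals generated by the minimal vertex covers of the hypergraph whose hyperedges are the supports of the generators of $M_n$. Since no variable $z_i$ appears in any generator of $M_n$, every minimal prime is contained in $K[x,y]$, so the task reduces to enumerating minimal sets $S\subseteq\{x_1,\dots,x_n,y_1,\dots,y_n\}$ that meet every generator.

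The central step is a case analysis on the $x$-part of $S$. The quadric generators $x_ix_j$ force that at most one index $a\in[n]$ has $x_a\notin S$. In Case A, all $x_i$ lie in $S$, so every cubic is automatically covered and only the quartics impose a constraint: the set $U=\{m:y_m\notin S\}$ can have at most three elements. Minimality forces $|U|=3$, and writing $U=\{i,j,k\}$ recovers exactly the prime $P_{ijk}$; this produces $\binom{n}{3}$ primes. In Case B, there is a unique $a$ with $x_a\notin S$; the cubics $x_ay_jy_k$ for $j,k\in[n]\setminus\{a\}$ force that at most one index $b\neq a$ has $y_b\notin S$, while the remaining cubics are covered by the $x$-variables present in $S$, and the quartics are covered because any four indices contain at least two $y$'s in $S$. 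One then verifies that $y_a$ itself can be removed from $S$ (every generator containing $y_a$ is already covered by the $x$'s or by one of the other $y$'s), and minimality forces its removal; this yields the prime $Q_{ab}$ and gives $n(n-1)=2\binom{n}{2}$ primes.

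To finish, I verify that each listed prime actually contains $M_n$ (which is immediate: each generator of $M_n$ includes a variable from the generating set of $P_{ijk}$ or $Q_{ab}$), and that the list is irredundant. Irredundancy follows by checking that dropping any single variable from the generating set of $P_{ijk}$ or of $Q_{ab}$ leaves some generator of $M_n$ uncovered (for $P_{ijk}$, dropping $x_c$ or $y_c$ exposes either a cubic $x_cy_iy_j$ or the quartic $y_cy_iy_jy_k$; for $Q_{ab}$, dropping $x_c$ exposes $x_ax_c$, and dropping $y_c$ exposes $x_ay_by_c$). In particular no $P_{ijk}$ contains a $Q_{ab}$ or vice versa. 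Hence $M_n=\bigcap P_{ijk}\cap\bigcap Q_{ab}$ is the irredundant minimal prime decomposition, with $\binom{n}{3}+2\binom{n}{2}$ components.

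The main obstacle is purely bookkeeping in Case B: one must separately justify that $y_a$ can be deleted from the cover and that no further $y_\ell$ with $\ell\neq a,b$ can be. Both amount to short checks against the three families of generators, so the difficulty lies in presenting the case analysis cleanly rather than in any substantive algebraic content.
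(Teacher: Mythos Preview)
Your proof is correct and follows essentially the same approach as the paper. The paper argues on the variety side---showing that any point of $V(M_n)$ lies in some $V(P_{ijk})$ or $V(Q_{ij})$ by the same case split on whether all $\tilde x_i$ vanish---whereas you phrase the identical combinatorics in the dual language of minimal vertex covers; your explicit verification of irredundancy is a small addition the paper leaves implicit.
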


\proof
Let $L$ denote the intersection of all $P_{ijk}$ and $Q_{ij}$.
Each monomial generator of $M_n$ lies in $P_{ijk}$ and in $Q_{ij}$, so
$M_n \subseteq L$. For the reverse inclusion, we will show that 
 $V(M_n)$ is contained in $V(L) =  (\cup V(P_{ijk})) \cup (\cup V(Q_{ij}))$.

Let $(\tilde{x},\tilde{y}, \tilde{z})$ be any point in the variety $V(M_n)$. First suppose 
$\tilde{x}_i=0$ for all $i\in[n]$. Since $\tilde{y}_i \tilde{y}_j \tilde{y}_k \tilde{y}_l=0$ for distinct indices,
there are at most three indices $i,j,k$ such that $\tilde y_i$, $\tilde y_j$ and $\tilde y_k$ are nonzero. 
Hence $(\tilde{x},\tilde{y}, \tilde{z})  \in V(P_{ijk})$.

Next suppose $\tilde x_i \not = 0$. The index $i$ is unique because $x_i x_j \in M_n$ for all $j \not= i$.
Since $\tilde x_i \tilde y_j \tilde y_k=0$ for all $j,k\ne i$, we have $\tilde y_j \ne 0$
for at most one index $j\ne i$. These properties imply
$(\tilde{x},\tilde{y}, \tilde{z})  \in V(Q_{ij})$.
\endproof

We regard the monomial variety $V(M_n)$ as a threefold inside
the product of projective planes $(\PP^2)^n$.
If the focal points are distinct, $V_A$ has a Gr\"obner degeneration to the reducible threefold $V(M_n)$.
The irreducible components of $V(M_n)$ are
\begin{equation}
\label{eq:components} V(P_{ijk}) \,\simeq \,\PP^1 \times \PP^1 \times \PP^1 
\quad \hbox{and} \quad
V(Q_{ij}) \,\simeq \,\PP^2 \times  \PP^1. 
\end{equation}
We find it convenient to regard $(\PP^2)^n$ as a toric variety, so as to
identify it with its polytope $(\Delta_2)^n$, a direct product of triangles.
The components in (\ref{eq:components}) are $3$-dimensional boundary
strata of $(\PP^2)^n$, and we identify them with faces of $(\Delta_2)^n$.
The corresponding $3$-dimensional polytopes are  the {\em $3$-cube}
and the {\em triangular prism}. The following three examples illustrate this view.

\begin{figure}
\includegraphics[width=0.56\linewidth]{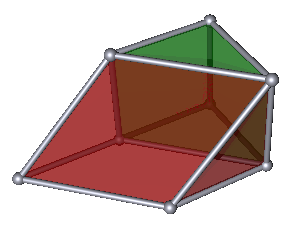}
\caption{The variety of the generic initial ideal $M_2$ seen as two adjacent facets
of the $4$-dimensional polytope $\Delta_2 \times \Delta_2$.}
\label{V2_Z_Monomial_Figure}
\end{figure}

\begin{example}{\rm [Two cameras $(n=2)$] \ }
The variety of $\,M_2 = \langle x_1 \rangle \,\cap \, \langle x_2 \rangle \,$
is a hypersurface in $\PP^2 \times \PP^2$.
The two components are triangular prisms $\PP^2 \times \PP^1$,
which are glued along a common square $\PP^1 \times \PP^1$, as shown in 
Figure~\ref{V2_Z_Monomial_Figure}. \qed
\end{example}

\begin{example}{\rm [Three cameras $(n=3)$] \ }  \label{ex:M_3}
The variety of $M_3 $ is a threefold in $\PP^2 \times \PP^2 \times \PP^2$.
Its seven components are given by the prime decomposition
$$ 
\begin{matrix} M_3 \quad = & \quad
 \langle x_1, x_2, y_1 \rangle \, \cap \,
  \langle x_1, x_2, y_2 \rangle \, \cap \,
    \langle x_1, x_3, y_1 \rangle  & \\ 
&    \, \cap \,\,
      \langle x_1, x_3, y_3 \rangle \, \cap \,
   \langle x_2, x_3, y_2 \rangle \, \cap \,
      \langle x_2, x_3, y_3 \rangle & \!\! \cap \,\,
      \langle x_1, x_2, x_3 \rangle .
\end{matrix}
       $$
The last component is a cube $\PP^1 \times \PP^1 \times \PP^1$,
and the other six components are triangular prisms $\PP^2 \times \PP^1$.
These are glued in pairs along three of the six faces of the cube.
For instance, the two triangular prisms $V(x_1,x_2,y_1)$
and $V(x_1,x_3,y_1)$ intersect the cube $V(x_1,x_2,x_3)$
in the common square face $V(x_1,x_2,x_3,y_1)$  $\simeq \PP^1\times \PP^1$.
This polyhedral complex lives in the boundary of $(\Delta_2)^3$,
and it shown in  Figure~\ref{V3_Z_Monomial_Figure}. 
Compare this picture with Figure \ref{V3_J8_Blowup}.
\qed
 \end{example}

\begin{figure}
\includegraphics[width=0.43\linewidth]{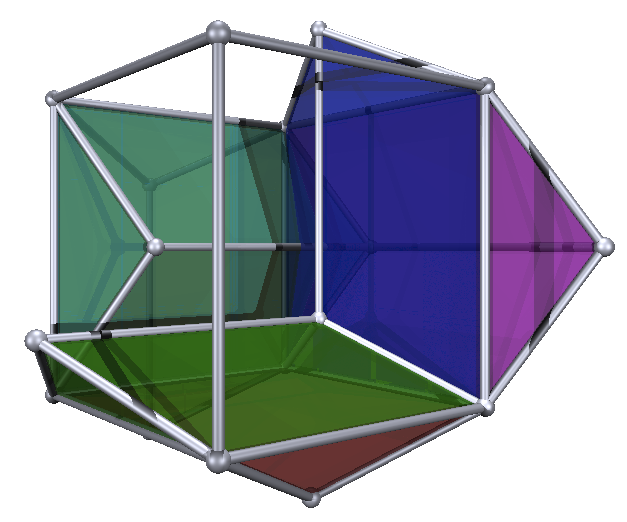} 
\!\!\!\!\!
\includegraphics[width=0.55\linewidth]{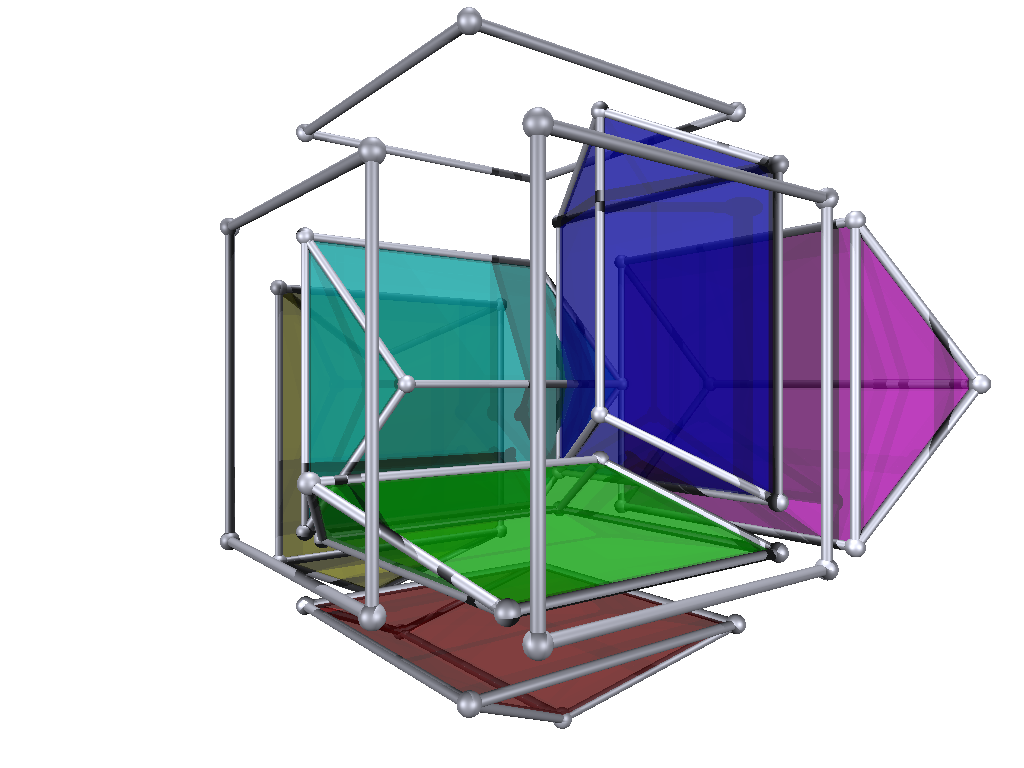}
\caption{The monomial variety $V(M_3)$ as a subcomplex of $(\Delta_2)^3$.}
\label{V3_Z_Monomial_Figure}
\end{figure}

\begin{example} {\rm [Four cameras $(n = 4)$] } \label{ex:M_4}
The variety $V(M_4) $ is a threefold in $(\PP^2)^4$,
regarded as a $3$-dimensional subcomplex
in the boundary of the $8$-dimensional polytope $(\Delta_2)^4$.
It consists of four cubes and twelve triangular prisms.
The cubes share a common vertex, any two cubes
intersect in a square, and each of the six squares
is adjacent to two triangular prisms. \qed
\end{example}

From the prime decomposition in Proposition \ref{prop:prime_decomposition}
we can read off the {\em multidegree} \cite[\S 8.5]{MS} of the ideal $M_n$.
Here and in what follows, we use
 the natural $\ZZ^n$-grading on $K[x,y,z]$ given by
$\deg(x_i) = \deg(y_i) = \deg(z_i) = e_i$.
Each multiview ideal $J_A$ is homogeneous with respect to this
$\ZZ^n$-grading.

\begin{corollary}
The multidegree of the generic initial ideal $M_n $ is equal to
\begin{equation}
\label{eq:multidegree} \mathcal{C}\bigl(K[x,y,z]/M_n; {\bf t} )\bigr) \,\,\, = \, \,\,
t_1^2 t_2^2 \cdots t_n^2 \cdot \left( \sum_{1 \leq i < j < k \leq n} \! \frac{1}{t_i t_j t_k} \,+ \!
\sum_{1 \leq i,j \leq n} \! \frac{1}{t_i^2 t_j}\, \right)
\end{equation}
\end{corollary}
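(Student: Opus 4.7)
The plan is to combine the prime decomposition of $M_n$ given in Proposition~\ref{prop:prime_decomposition} with the additivity of multidegrees for equidimensional $\ZZ^n$-graded ideals. First I would verify that $M_n$ is equidimensional: the prime $P_{ijk}$ is generated by $n+(n-3)=2n-3$ variables, and the prime $Q_{ij}$ is generated by $(n-1)+(n-2)=2n-3$ variables, so every minimal prime of $M_n$ has codimension exactly $2n-3$ in $K[x,y,z]$.

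The second step invokes the standard formula \cite[\S 8.5]{MS} that for a $\ZZ^n$-graded module $S/I$ with $I$ the intersection of equicodimensional primes, the multidegree is additive over the top-dimensional components; here every minimal prime is top-dimensional, so
$$\mathcal{C}\bigl(K[x,y,z]/M_n;\mathbf{t}\bigr) \,=\, \sum_{\{i,j,k\}} \mathcal{C}\bigl(K[x,y,z]/P_{ijk};\mathbf{t}\bigr) \,+\, \sum_{i\neq j} \mathcal{C}\bigl(K[x,y,z]/Q_{ij};\mathbf{t}\bigr).$$
For each minimal prime, which is a coordinate subspace ideal, the Koszul resolution provides the multidegree as the product of the $\ZZ^n$-degrees of its generating variables, with the convention $\deg(x_l)=\deg(y_l)=\deg(z_l)=e_l$ represented multiplicatively by $t_l$.

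Applying this gives
$$\mathcal{C}(S/P_{ijk};\mathbf{t}) \,=\, \prod_{l=1}^n t_l \,\cdot\! \prod_{l\notin\{i,j,k\}} t_l \,=\, \frac{t_1^2\cdots t_n^2}{t_i t_j t_k},$$
$$\mathcal{C}(S/Q_{ij};\mathbf{t}) \,=\, \prod_{l\neq i} t_l \,\cdot\! \prod_{l\notin\{i,j\}} t_l \,=\, \frac{t_1^2\cdots t_n^2}{t_i^2 t_j}.$$
Summing the first expression over all $\binom{n}{3}$ triples and the second over all $n(n-1)$ ordered pairs $(i,j)$ with $i\neq j$, and then factoring out the common $t_1^2 t_2^2 \cdots t_n^2$, yields the asserted formula (interpreting the second sum as ranging over $i\neq j$, as is forced by the prime decomposition).

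The computation is essentially routine; there is no substantive obstacle. The only nontrivial inputs are the prime decomposition of $M_n$ (already established in Proposition~\ref{prop:prime_decomposition}), the observation that $M_n$ is equidimensional, and the general additivity formula for multidegrees of unions of coordinate subspaces.
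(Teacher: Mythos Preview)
Your argument is correct and is exactly the approach the paper intends: the corollary is stated without proof, with only the remark that the multidegree can be ``read off'' from the prime decomposition in Proposition~\ref{prop:prime_decomposition} together with \cite[\S 8.5]{MS}. Your write-up simply makes explicit the equidimensionality check and the additivity over the coordinate-subspace components, including the correct reading of the second sum as ranging over ordered pairs $i\neq j$.
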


A more refined analysis also yields the Hilbert function in the $\ZZ^n$-grading.

\begin{theorem} \label{thm:hilbertfct}
The multigraded Hilbert function of $K[x,y,z]/M_n$ equals
\begin{equation}
\label{Vn_Hilbert_function}
\NN^n \,\to \,\NN,\ (u_1,\ldots,u_n)\,\mapsto \,{u_1+\cdots+u_n+3\choose 3}-\sum_{i=1}^n{u_i+2\choose 3}.
\end{equation}
\end{theorem}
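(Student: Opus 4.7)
My plan is to count the standard monomials of $M_n$ directly, exploiting the identification $M_n = Z_{4,n}\cap K[x,y,z]$ established in the proof of Lemma~\ref{lem:generic2}. Since $Z_{4,n}=\tin_\prec(J^B)$ and initial ideals preserve multigraded Hilbert functions, $K[w,x,y,z]/Z_{4,n}$ inherits the Hilbert function of $K[w,x,y,z]/J^B$, namely $(u_1,\ldots,u_n)\mapsto\binom{u_1+\cdots+u_n+3}{3}$, recorded just before Lemma~\ref{lem:generators of Z}. I will then partition the standard monomials of $Z_{4,n}$ of multidegree $(u_1,\ldots,u_n)$ into those that lie in $K[x,y,z]$ — which are exactly the standard monomials of $M_n$ in that multidegree — and those involving at least one $w$-variable.

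The case analysis is forced by the generators of $Z_{4,n}$ containing $w$: the quadrics $w_iw_j$ allow at most one index $i_0$ for which $w_{i_0}^a$ appears ($a\ge 1$), and then the generators $w_{i_0}x_j,\,w_{i_0}y_j$ for $j\ne i_0$ eliminate $x_j$ and $y_j$ from every other block, forcing the $j$-th block to be $z_j^{u_j}$. The $i_0$-block may then be any monomial of degree $u_{i_0}-a$ in $x_{i_0},y_{i_0},z_{i_0}$: crucially, no generator of $M_n$ constrains it, since $x_ix_j$, $x_iy_jy_k$ and $y_iy_jy_ky_l$ each involve distinct indices. Hence there are exactly $\binom{u_{i_0}-a+2}{2}$ admissible $i_0$-blocks.

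Summing $\binom{u_{i_0}-a+2}{2}$ over $a=1,\ldots,u_{i_0}$ yields $\binom{u_{i_0}+2}{3}$ by the hockey-stick identity, and summing over $i_0$ contributes $\sum_{i=1}^n\binom{u_i+2}{3}$ to the global count. Writing $N_n(u_1,\ldots,u_n)$ for the number of standard monomials of $M_n$ in the given multidegree and equating both enumerations, I obtain
\[
\binom{u_1+\cdots+u_n+3}{3} \,=\, N_n(u_1,\ldots,u_n) \,+\, \sum_{i=1}^n \binom{u_i+2}{3},
\]
which rearranges to (\ref{Vn_Hilbert_function}).

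The only delicate point, and hence the main thing to verify carefully, is the bookkeeping in the $w$-containing case: one must check both that $x_j,y_j$ are genuinely forbidden for every $j\ne i_0$ and that the $i_0$-block is genuinely unconstrained by $M_n$. Both follow from direct inspection of the generator lists, so no deeper obstacle arises. I expect this combinatorial route to be cleaner than the alternative geometric argument, which would identify the Hilbert function with $\dim H^0$ of a line bundle on the blowup of $\PP^3$ at the focal points and then invoke independence of fat-point conditions in general position.
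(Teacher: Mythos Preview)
Your argument is correct and takes a genuinely different route from the paper's own proof. The paper proves Theorem~\ref{thm:hilbertfct} by a direct enumeration of the standard monomials of $M_n$: it splits monomials $x^ay^bz^c\notin M_n$ according to whether the $x$-exponent $a$ is zero or not, and in each case counts the admissible $y$-supports with an inclusion--exclusion over repeated indices. The resulting two displayed sums are then combined algebraically into $\binom{U+3}{3}-\sum_i\binom{u_i+2}{3}$.

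Your approach instead exploits the identification $M_n = Z_{4,n}\cap K[x,y,z]$ and the already-recorded Hilbert function $\binom{U+3}{3}$ of $K[w,x,y,z]/Z_{4,n}$, so that the only new combinatorics required is counting the $w$-containing standard monomials of $Z_{4,n}$. Because the generators $w_iw_j,\,w_ix_j,\,w_iy_j$ collapse any such monomial to the rigid shape $w_{i_0}^{a}\cdot(\text{free monomial in }x_{i_0},y_{i_0},z_{i_0})\cdot\prod_{j\ne i_0}z_j^{u_j}$, this side count is immediate via the hockey-stick identity. The trade-off is that your proof is shorter and more conceptual but depends on the Cartwright--Sturmfels input (Lemma~\ref{lem:generators of Z}) and the intersection identity from the proof of Lemma~\ref{lem:generic2}, whereas the paper's proof is entirely self-contained within the combinatorics of $M_n$ itself. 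Both are valid; yours makes the formula~\eqref{Vn_Hilbert_function} look inevitable once one views $M_n$ as the $w$-free slice of $Z_{4,n}$.
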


\proof Fix  $u\in\NN^n$. A $K$-basis $\mathfrak{B}_u$ for $(K[x,y,z]/M_n)_u$ is given by all monomials $x^ay^bz^c \not \in M_n$ such that $a+b+c=u$.  Therefore, either (i) $a=0$ and at most three components 
of $b$ are non-zero; or (ii) $a\ne 0$, in which case only one $a_i$ can be non-zero and $b_j \not= 0$ for at most 
one $j \in [n] \backslash \{i\}$.

We shall count the monomials in $\mathfrak{B}_u$. Monomials of type (i) look like $y^bz^c$, with at most three nonzero entries in $b$. Also, $b$ determines $c$ since $c_i = u_i - b_i$ for all $i \in [n]$, and so we count the number of possibilities for $y^b$.
There are $u_i$ choices for $b_i \ne 0$, and thus $U := u_1+\cdots+u_n$ many monomials in the set ${\mathcal Y} := \{y_i^{b_i} \,:\, 1 \leq b_i \leq u_i, \,i=1,\ldots,n\}$. The factor $y^b$ in $y^bz^c$ is the product of $0$, $1$, $2$ or $3$ monomials from ${\mathcal Y}$ 
with distinct subscripts.

To resolve over-counting, consider a fixed index $i$.  There are ${u_i\choose 2}$ ways of choosing two monomials from ${\mathcal Y}$ with subscript $i$ and ${u_i\choose 3}$ ways of choosing three monomials from ${\mathcal Y}$ with subscript $i$. Also, there are ${u_i\choose 2}(U- u_i)$ ways of choosing two monomials from ${\mathcal Y}$ with subscript $i$ and a third monomial with a different subscript. Hence, the number of choices for $y^b$ in $y^bz^c$ is

\begin{small}
$${U\choose 0}+{U\choose 1}+\left[{U\choose 2} - \sum_{i=1}^n \! {u_i\choose 2}\right]+
\left[{U\choose 3} -\sum_{i=1}^n \! {u_i\choose 3} -U\sum_{i=1}^n \! {u_i\choose 2}+\sum_{i=1}^nu_i{u_i\choose 2}\right]\!.$$
\end{small}

For case (ii) we count all monomials $x^ay^bz^c\in\mathfrak{B}_u$ with $a_i\ne 0$ and all other $a_j=0$. 
It suffices to count the choices for the factor $x^ay^b$. For fixed $i$, there are ${u_i+1\choose 2}$ monomials of the form 
$x_i^{a_i}y_i^{b_i}$ with $a_i+b_i \leq u_i$ and $a_i \geq 1$.  Such a monomial may be multiplied with 
$y_j^{b_j}$ such that $j \ne i$ and $0 \leq b_j \leq u_j$.  This amounts to choosing zero or one monomial from ${\mathcal Y} \backslash \{y_i, y_i^2, 
\ldots, y_i^{u_i} \}$ for which there are $1+U-u_i$ choices. Hence, there are 
$$[1+U]\sum_{i=1}^n {u_i+1\choose 2} \,-\, \sum_{i=1}^nu_i{u_i+1\choose 2}$$
monomials in $\mathfrak{B}_u$ of type (ii).
Adding the two expressions, we get 
\begin{smaller}
\begin{align*}
|\mathfrak{B}_u| 
&=  1+U+{U\choose 2}+{U\choose 3}+(1+U)\sum_{i=1}^n{u_i\choose 1} -\sum_{i=1}^n u_i{u_i\choose 1} -\sum_{i=1}^n {u_i\choose 3}\\
&= 1+U+{U\choose 2}+{U\choose 3}+(1+U)U - \sum_{i=1}^n{u_i+2\choose 3}\\
&= {U+3\choose 3}-\sum_{i=1}^n{u_i+2\choose 3}.
\end{align*}
\end{smaller}
\vskip -0.9cm
\endproof

\smallskip

Our analysis of $M_n$ has the following implication for the  multiview ideals $J_A$.
Note that these are $\ZZ^n$-homogeneous for any camera configuration $A$.

 \begin{theorem} \label{thm:multiview ideals on Hilbert scheme}
For an $n$-tuple of camera matrices $A = (A_1,\ldots, A_n)$ with $\textup{rank}(A_i)=3$ for each $i$, the
multiview ideal $J_A$ has the Hilbert function (\ref{Vn_Hilbert_function}) 
if and only if the focal points of the $n$ cameras are pairwise distinct.
 \end{theorem}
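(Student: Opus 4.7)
The approach is to reduce to the generic case handled in Section~2, via the observation that $\dim(K[x,y,z]/J_A)_u$ depends only on the focal points $f_i = \ker A_i$: if $A' = (g_1 A_1,\ldots,g_n A_n)$ for $g_i \in \GL(3)$, then the induced $\ZZ^n$-graded change of coordinates $(g_1,\ldots,g_n) \in \GL(3)^n$ on $K[x,y,z]$ carries $J_A$ to $J_{A'}$, preserving every multigraded Hilbert function value.

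For the forward direction, assuming the focal points are pairwise distinct, my plan is to produce a camera configuration $\widetilde A$ with $\ker \widetilde A_i = f_i$ satisfying the Section~2 genericity hypothesis: every $4\times 4$ minor of $[\widetilde A_1^T \mid \cdots \mid \widetilde A_n^T]$ is nonzero. Given such a $\widetilde A$, Lemma~\ref{lem:generic2} and Theorem~\ref{thm:hilbertfct} force $\dim(K[x,y,z]/J_{\widetilde A})_u$ to equal (\ref{Vn_Hilbert_function}), and the reduction above transfers the equality to $J_A$. The main obstacle is producing $\widetilde A$: the stated condition is a finite intersection of Zariski-open conditions on the rows of the $\widetilde A_i$, and each minor equals the determinant of four linear forms chosen from the three-dimensional subspaces $W_i := \{\ell \in K[t_1,\ldots,t_4]_1 : \ell(f_i) = 0\}$, with at most three rows from any one block. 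Distinctness of focal points gives $\dim(W_i \cap W_j) = 2$ for $i \ne j$, so $W_i + W_j = K[t]_1$; a short case analysis on how the four chosen rows distribute among the blocks then shows each individual minor is generically nonzero, and the finite intersection of the resulting open dense subsets remains open dense, yielding $\widetilde A$.

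For the converse direction, I observe that $J_A$ is the kernel of the ring map $\phi_A^\ast : K[x,y,z] \to K[t_1,\ldots,t_4]$ sending $(x_i,y_i,z_i) \mapsto (A_i^1 t, A_i^2 t, A_i^3 t)$, and that the three rows of $A_i$ span $W_i$; hence the image of $\phi_A^\ast$ in multidegree $u$ equals $\prod_i \textup{Sym}^{u_i}(W_i) \subset K[t]_U$ with $U := \sum u_i$. If $f_i = f_j$ for some $i \ne j$, then $W_i = W_j$, so in bidegree $u = e_i + e_j$ the image equals $\textup{Sym}^2(W_i)$, of dimension $\binom{4}{2} = 6$, whereas (\ref{Vn_Hilbert_function}) evaluates at $u = e_i+e_j$ to $\binom{5}{3} - 2\binom{3}{3} = 8$. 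Hence the Hilbert function of $J_A$ differs from (\ref{Vn_Hilbert_function}) in this bidegree, completing the proof.
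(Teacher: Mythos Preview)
Your proof is correct. The ``if'' direction is essentially the paper's own argument, stated in the first paragraph of Section~3: acting by a generic element of $\GL(3)^n$ on $A$ produces a configuration whose $4\times 4$ minors are all nonzero, and then Lemma~\ref{lem:generic2} and Theorem~\ref{thm:hilbertfct} finish. You supply more justification than the paper for why the minors can be made nonzero (your case analysis on how the four rows distribute among blocks), whereas the paper simply asserts this.

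The ``only-if'' direction is where your approach genuinely diverges. The paper argues as follows: using (\ref{lem:multiview ideal stays same}) it moves the common rowspace of $A_i$ and $A_j$ to the hyperplane $\{t_1=0\}$ via $Q\in\PGL(4,K)$, writes $A_i = [\,\mathbf{0}\mid C_i\,]$, extends to $B_i$, and then observes that the $2\times 2$ minors of the bottom three rows of (\ref{eq:inverseB}) generate an ideal $I$ on the Hilbert scheme $H_{3,2}$ of \cite{CS}, whose Hilbert function at $(1,1)$ is $6$; since $I\subseteq J_A$, the value of $\dim(K[x,y,z]/J_A)_{e_i+e_j}$ is at most $6<8$. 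Your argument bypasses the coordinate change, the auxiliary ideal $J^B$, and the appeal to \cite{CS} entirely: you identify $(K[x,y,z]/J_A)_u$ directly with the image $\prod_i\operatorname{Sym}^{u_i}(W_i)\subset K[t]_U$ of the graded piece under $\phi_A^\ast$, and read off $\dim\operatorname{Sym}^2(W_i)=6$ when $W_i=W_j$. This is more elementary and more transparent; the paper's route, on the other hand, ties the computation to the structural results of \cite{CS} already invoked in Section~2, so it fits the surrounding narrative. Both reach the same numerical obstruction $6\ne 8$ in bidegree $e_i+e_j$.
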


\begin{proof}
The if-direction follows from the argument in the first paragraph of this section.
If the $n$ camera positions $f_i = {\rm ker}(A_i)$  are distinct in $\PP^3$
then $M_n$ is the generic initial ideal of $J_A$, and hence both ideals have
 the same $\ZZ^n$-graded Hilbert function.
 For the only-if-direction we shall use:
 \begin{equation} \label{lem:multiview ideal stays same}
\hbox{If $Q \in \PGL(4,K)$ and $AQ := (A_1Q, \ldots, A_nQ)$, then $J_A = J_{AQ}$.}
\end{equation}
This holds because $Q$ defines an isomorphism on $\PP^3 $
and hence $\phi_A$ as in (\ref{eq:phiA}) has the same image in $(\PP^2)^n$ as
$\phi_{AQ}$.

Suppose first that $n=2$ and $A_1$ and $A_2$ have the same focal point
and hence the same (three-dimensional) rowspace $W$. 
We can map $W$ to the hyperplane $\{x_1 = 0\}$ by some
$Q \in \PGL(4,K)$, and  (\ref{lem:multiview ideal stays same})
ensures that
 $J_A = J_{AQ}$. 
 Thus we may assume that $ A_1 = \left[ \begin{array}{ll} \bf{0} & C_1 \end{array} \right]$ and $A_2 = \left[ \begin{array}{ll} \bf{0} & C_2 \end{array} \right]$ where $C_1$ and $C_2$ are invertible matrices and $\bf{0}$ is a column of zeros. Choosing 
$f_1 = f_2 = (1,0,0,0)$ as the top row of $B_1$ and $B_2$ (as in Section 2), we have 
$$ B_1^{-1} = \left[ \begin{array}{cc} 1 & \bf{0} \\ \bf{0} & C_1^{-1} \end{array} \right], \,\,\,B_2^{-1} = \left[ \begin{array}{cc} 1 & \bf{0} \\ \bf{0} & C_2^{-1} \end{array} \right].$$

The ideal $J^B$ is generated by the $2\times2$ minors of the matrix (\ref{eq:inverseB}) which is 
$$D = \left[ \begin{array}{cc} w_1 & w_2 \\
p_1(x_1,y_1,z_1) & q_1(x_2,y_2,z_2) \\ p_2(x_1,y_1,z_1)  & q_2(x_2,y_2,z_2) \\ p_3(x_1,y_1,z_1)  & q_3(x_2,y_2,z_2)  \end{array} \right]$$
where the $p_i$'s and $q_i$'s are linear polynomials. The ideal $I$ generated by the $2 \times 2$ minors of the submatrix of $D$ obtained by deleting the top row lies on the Hilbert scheme $H_{3,2}$ from \cite{CS} and hence $K[x,y,z]/I$ has Hilbert function 
$$ \mathbb N^2 \to \mathbb N, \,\,\,(u_1,u_2) \mapsto \left( \begin{array}{c} 
u_1+u_2+2 \\ 2 \end{array} \right).$$
For $(u_1,u_2) = (1,1)$, this has value
$6$. Since $I \subseteq J_A = J^B \cap K[x,y,z] $, the Hilbert function
of $ K[x,y,z]/J_A$ has value $\leq 6$, while (\ref{Vn_Hilbert_function}) evaluates to $8$.

If $n > 2$, we may assume without loss of generality that $A_1$ and $A_2$ have the same rowspace. 
The argument for $n=2$ shows that $J_A = J^B \cap K[x,y,z] \supseteq I$. The Hilbert function value of $K[x,y,z]/J_A$ in degree $e_1+e_2$ is again $8$, while the Hilbert function value of $K[x,y,z]/I$ in degree $e_1+e_2$ coincides with 
the value $6$ for $K[x_1,y_1,z_1,x_2,y_2,z_2]/I$. So we again conclude that 
$K[x,y,z]/J_A$ does not have Hilbert function (\ref{Vn_Hilbert_function}).
 \end{proof}

For $G = \PGL(3,K)$, the product $G^n$ acts on $K[x,y,z]$ by left-multiplication
$$ (g_1,\ldots,g_n) \cdot  \left[ \begin{array}{c} x_i \\ y_i \\ z_i \end{array} \right] 
\,\,=\,\, \, g_i  \left[ \begin{array}{c} x_i \\ y_i \\ z_i \end{array} \right].$$
An ideal $I$ in $K[x,y,z]$ is said to be {\em Borel-fixed} if it is fixed
under the induced action of $\B^n$ where $\B$ is the subgroup of lower triangular
matrices in $G$.

\begin{proposition}
The generic initial ideal $M_n$ is the unique ideal  in $K[x,y,z]$
that is Borel-fixed and has the Hilbert function (\ref{Vn_Hilbert_function})
in the $\ZZ^n$-grading.
\end{proposition}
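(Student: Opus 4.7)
My approach is to prove that any Borel-fixed $\ZZ^n$-graded ideal $I \subseteq K[x,y,z]$ with the Hilbert function (\ref{Vn_Hilbert_function}) must contain $M_n$; the reverse inclusion then follows from Theorem~\ref{thm:hilbertfct} since $M_n$ already realizes this Hilbert function.

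First, $I$ is automatically a monomial ideal: the group $\B^n$ contains the diagonal $3n$-dimensional torus acting on $K[x,y,z]$ by independently scaling each variable, and every torus-invariant ideal in a polynomial ring is monomial. So I may assume $I$ is generated by monomials, and the Borel-fixed condition becomes the combinatorial statement that $I$ is closed under the raising moves $z_i \to y_i$, $z_i \to x_i$, and $y_i \to x_i$ performed at each index $i$ independently.

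The heart of the plan is to show that each of the three families of minimal generators of $M_n$ lies in $I$. For the bilinears $x_ix_j$: in multidegree $e_i+e_j$, the Hilbert function value $8$ combined with $\dim K[x,y,z]_{e_i+e_j}=9$ forces $|I_{e_i+e_j}|=1$, and the unique monomial in $I_{e_i+e_j}$ must be stable under Borel raising, hence it is the Borel-maximum $x_ix_j$. For the cubics $x_iy_jy_k$: in multidegree $e_i+e_j+e_k$ one has $|I|=10$, with seven monomials forced by the bilinears of step one, and three ``extras'' lying among the twenty monomials with at most one $x$-variable. Several Borel-closed triples are compatible at this single multidegree, but I would cross-check the Hilbert function value $25$ at the three adjacent multidegrees $2e_i+e_j+e_k$, $e_i+2e_j+e_k$, and $e_i+e_j+2e_k$ via inclusion-exclusion on the monomials divisible by the hypothesized generators; every alternative such as $\{x_iy_jy_k,x_jy_iy_k,x_jy_iz_k\}$ overshoots to $|I|=26$ at one of these multidegrees, leaving only $\{x_iy_jy_k,x_jy_iy_k,x_ky_iy_j\}$. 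For the quartics $y_iy_jy_ky_l$: an analogous enumeration in multidegree $e_i+e_j+e_k+e_l$, using the bilinear and cubic generators already in $I$ and consistency at the neighboring multidegrees $2e_i+e_j+e_k+e_l$, rules out every alternative Borel-maximum such as $x_iy_jz_kz_l$ and forces $y_iy_jy_ky_l \in I$.

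With $M_n \subseteq I$ established and the two ideals sharing the same Hilbert function by hypothesis and Theorem~\ref{thm:hilbertfct}, the equality $M_n = I$ follows immediately. The main obstacle is the cross-multidegree case analysis in the cubic and quartic steps: for each candidate Borel-closed subset of non-forced monomials one must check, by an inclusion-exclusion computation over the monomials divisible by the candidate generators, that the resulting Hilbert function violates (\ref{Vn_Hilbert_function}) at some adjacent multidegree. The individual computations are elementary, but the bookkeeping requires care.
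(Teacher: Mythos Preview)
Your approach is genuinely different from the paper's, and the overall architecture is sound: reduce to monomial ideals via the torus in $\B^n$, force the generators of $M_n$ into $I$ multidegree by multidegree, and conclude by equality of Hilbert functions. The bilinear step is clean and correct. However, the cubic and quartic steps are only sketched, and the promised case analysis is larger than your proposal suggests. In multidegree $e_i+e_j+e_k$ there are many Borel upward--closed triples among the $20$ monomials with at most one $x$ (for instance $\{x_iy_jy_k,\,x_iy_jz_k,\,x_iz_jy_k\}$, or two maximal elements together with one sub-maximal such as $\{x_iy_jy_k,\,x_jy_iy_k,\,x_jy_iz_k\}$); each must be eliminated by exhibiting an adjacent multidegree at which the monomials it forces already exceed the quota. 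Your sample alternative does overshoot, but at $e_i+2e_j+e_k$ rather than at $2e_i+e_j+e_k$ (where it gives exactly $25$), so the ``one of these multidegrees'' phrasing hides real work: different alternatives fail at different neighbors, and you have given no systematic reason why every alternative must fail somewhere. The quartic step compounds this. So the proposal is plausible but, as written, incomplete.

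The paper avoids all of this bookkeeping by reversing the containment. Since the $\ZZ^n$-graded Hilbert function determines the multidegree \eqref{eq:multidegree}, every maximum--dimensional associated prime of a Borel-fixed $I$ with Hilbert function \eqref{Vn_Hilbert_function} has multidegree $t_1^2\cdots t_n^2/(t_it_jt_k)$ or $t_1^2\cdots t_n^2/(t_i^2t_j)$; Borel-fixedness then forces these primes to be exactly the $P_{ijk}$ and $Q_{ij}$ of Proposition~\ref{prop:prime_decomposition}. Hence $I\subseteq\sqrt{I}\subseteq\bigcap P_{ijk}\cap\bigcap Q_{ij}=M_n$, and equality of Hilbert functions finishes. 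This argument is uniform in $n$ and requires no enumeration of Borel-closed subsets, which is precisely the step where your approach becomes laborious.
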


\proof
The proof is analagous to that of 
\cite[Theorem 2.1]{CS}, where $Z_{d,n}$ plays the role of $M_n$.
The ideal $M_n$ is Borel-fixed because it is a generic initial ideal.
The same approach as  in \cite[\S 15.9.2]{E} can be used to prove this fact.

The multidegree of any $\ZZ^n$-graded ideal is determined by its Hilbert series \cite[Claim 8.54]{MS}.
Thus any ideal $I$ with Hilbert function (\ref{Vn_Hilbert_function}) has multidegree (\ref{eq:multidegree}).
Let $I$ be such a Borel-fixed ideal. This is a monomial ideal.

Each maximum-dimensional associated prime $P$ of  $I$ has multidegree either
$t_1^2t_2^2\cdots t_n^2/(t_it_jt_k)$ or $t_1^2t_2^2\cdots t_n^2/(t_i^2t_j)$, by \cite[Theorem 8.53]{MS}.
In the first case $P$ is generated by $2n-3$ indeterminates, one associated with each 
of the three cameras $i,j,k$ and two each from the other $n-3$ cameras.
Borel-fixedness of $I$ tells us that the generators indexed by each camera must be the most
expensive variables with respect to the order $\prec$. Hence $P=P_{ijk}$.
Similarly, $P=Q_{ij}$ in the case when $P$ has multidegree  $t_1^2t_2^2\cdots t_n^2/(t_i^2t_j)$.

Every prime component of $M_n$ is among the minimal associated primes of $I$.
This yields the containments $I\subseteq \sqrt{I}\subseteq M_n$.
Since $I$ and $M_n$ have the same $\ZZ^n$-graded Hilbert function,
the equality $I=M_n$ holds.
\endproof

The {\em Stanley-Reisner complex} of a squarefree monomial ideal $M$ in a polynomial ring
$K[t_1,\ldots,t_s]$ is the simplicial complex on $\{1, \ldots, s \}$ whose facets are the sets $[s] \backslash \sigma$ where
$P_\sigma := \{ t_i \,:\, i \in \sigma \}$ is a minimal prime of $M$. A {\em shelling} of a simplicial complex
is an ordering  $F_1,F_2, \ldots, F_q$
of its facets such that, for each $1 < j \leq q$, there exists a unique
minimal face of $F_j$ (with respect to inclusion) among the faces of $F_j$ that are not faces of some earlier facet $F_i$, $i < j$; see \cite[Definition 2.1]{Stanley}.
If the Stanley-Reisner complex of $M$ is shellable, then
$K[t_1, \ldots, t_s]/M$ is Cohen-Macaulay \cite[Theorem 2.5]{Stanley}.

\begin{proposition}
The Stanley-Reisner complex of the generic initial ideal $M_n$ is shellable.
Hence the quotient ring $K[x,y,z]/M_n$ is Cohen-Macaulay.
\end{proposition}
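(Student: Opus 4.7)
My plan is to exhibit an explicit shelling of the Stanley-Reisner complex $\Delta_n$ of $M_n$ and then apply \cite[Theorem 2.5]{Stanley}. By Proposition~\ref{prop:prime_decomposition}, none of $z_1,\ldots,z_n$ appears in any minimal prime of $M_n$, so every facet of $\Delta_n$ contains $\{z_1,\ldots,z_n\}$. Hence $\Delta_n$ is the simplicial join of the simplex on $\{z_1,\ldots,z_n\}$ with the two-dimensional complex $\Delta'_n$ on $\{x_1,\ldots,x_n,y_1,\ldots,y_n\}$ whose facets are the triangles $F_{ijk}=\{y_i,y_j,y_k\}$ for $1\le i<j<k\le n$ and $G_{ij}=\{x_i,y_i,y_j\}$ for distinct $i,j\in[n]$. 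Since joining with a simplex preserves shellability, it suffices to shell $\Delta'_n$.

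I would propose ordering the facets of $\Delta'_n$ as follows: first list all $F_{ijk}$ lexicographically in $(i,j,k)$, then list all $G_{ij}$ lexicographically in $(i,j)$. The first block is the classical lex shelling of the $2$-skeleton of the $(n-1)$-simplex on $\{y_1,\ldots,y_n\}$, with restriction face $R(F_{i_1 i_2 i_3})=\{y_{i_t}:i_t>t\}$; this part is standard and can be verified by checking that the edge $\{y_{i_a},y_{i_b}\}$ lies in some earlier $F_{i_1 i_2 i_3}\setminus\{y_{i_t}\}$ iff we can replace $y_{i_t}$ by a lex-smaller index. For the second block, when we add $G_{ij}$ the key structural observations are: the edge $\{y_i,y_j\}$ lies in some earlier $F_{ijk}$ (provided $n\ge 3$; the case $n=2$ is trivial as $M_2=\langle x_1x_2\rangle$), and the edge $\{x_i,y_j\}$ lies in \emph{no} earlier facet, since no $F_{abc}$ contains any $x$-variable and no previously listed $G_{ij'}$ (with $j'<j$ and $j'\ne i$) contains $y_j$. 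If $G_{ij}$ is the first facet of the second block with first index $i$, then $\{x_i,y_i\}$ is also new, giving restriction $R(G_{ij})=\{x_i\}$ and an intersection with earlier facets equal to the single codim-$1$ facet $\{y_i,y_j\}$ together with its subfaces. Otherwise the vertex $x_i$ and the edge $\{x_i,y_i\}$ were introduced by the first such $G_{ij_0}$, so $R(G_{ij})=\{x_i,y_j\}$ and the intersection has two codim-$1$ facets $G_{ij}\setminus\{x_i\}$ and $G_{ij}\setminus\{y_j\}$. In either case purity in codimension one is manifest.

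The main obstacle I anticipate is the bookkeeping in the second block: one must uniformly verify that $\{x_i,y_j\}$ is genuinely new at the moment $G_{ij}$ is added, while correctly tracking which edges have been introduced by earlier $G_{ij'}$ sharing the first index $i$. Once these verifications are complete, taking the join with the $z$-simplex lifts the shelling to $\Delta_n$, and \cite[Theorem 2.5]{Stanley} yields Cohen-Macaulayness of $K[x,y,z]/M_n$.
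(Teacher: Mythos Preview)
Your argument is correct, and the shelling order you propose genuinely works. The reduction to $\Delta'_n$ via the join with the $z$-simplex is clean, and the two-block ordering (lex on the $F_{ijk}$'s, then lex on the $G_{ij}$'s) does give a shelling with exactly the restriction faces you describe. The one place where your justification is phrased a bit loosely is the claim that $\{x_i,y_j\}$ is new when $G_{ij}$ is added: you only mention earlier $G_{ij'}$ with the same first index, whereas a priori one must also rule out $G_{i'j'}$ with $i'<i$. But this is immediate, since any facet containing $x_i$ is necessarily of the form $G_{i\bullet}$, so there is nothing further to check; you should state this explicitly.

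The paper's proof takes a different route. Rather than splitting into two blocks, it encodes both types of minimal primes uniformly by vectors $u\in\{0,1,2\}^n$ (with $P_{ijk}$ corresponding to $u$ having three $1$'s and $Q_{ij}$ to $u$ having a $0$ and a $1$), and then uses a \emph{single} lexicographic order on these $u$'s. This interleaves the $F$- and $G$-type facets rather than segregating them. The paper's approach is modeled directly on the shelling of $Z_{d,n}$ in \cite[Corollary~2.6]{CS}, so it has the advantage of being a visible specialization of a known argument. Your approach, by contrast, isolates the $2$-skeleton of the $(n-1)$-simplex on the $y$-vertices as a recognizable subcomplex and shells it first by the classical lex shelling, then attaches the $x$-triangles one camera at a time; this is more elementary and arguably more transparent, at the cost of not exhibiting the parallel with $Z_{d,n}$.
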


\proof
This proof is similar to that for $Z_{d,n}$ given in \cite[Corollary 2.6]{CS}.
 Let $\Delta_n$ denote the Stanley-Reisner complex of the ideal $M_n$.
By Proposition~\ref{prop:prime_decomposition}, there are two types of minimal primes for $M_n$,
namely $P_{ijk}$ and $Q_{ij}$, which we describe uniformly as follows. Let $P = (p_{ij})$ be the $3 \times n$ matrix whose $i$th column is $[x_i \,\, y_i \,\, z_i]^T$. For $u \in \{0,1,2\}^n$ define
$P_u := \langle p_{ij} \,:\, i \leq u_j, \, 1 \leq j \leq n \rangle$. Then the minimal primes $P_{ijk}$ of $M_n$ are precisely the primes $P_u$ as $u$ varies over all vectors with three coordinates equal to one and the rest equal to two, and the minimal primes $Q_{ij}$ are those $P_u$ where $u$ has one coordinate equal to zero, one coordinate equal to one and the rest equal to two. The facet of $\Delta_n$ corresponding to the minimal prime $P_u$ is then $F_u := \{ p_{ij} \,:\, u_j < i \leq 3, \, 1 \leq j \leq n \}$.
We claim that the ordering of the facets $F_u$ induced by ordering the $u$'s lexicographically starting with $(0,1,2,2, \ldots, 2)$ and ending with $(2,2, \ldots, 2,1,0)$ is a shelling of $\Delta_n$.

Consider the face  $\eta_u := \{ p_{ij} \,: \, j > 1, i = u_j+1 \leq 2\}$ of the facet $F_u$.
We will prove that $\eta_u$ is the unique minimal one among the faces of $F_u$ that have not appeared in a facet $F_{u'}$ for $u' < u$.  Suppose $G$ is a face of $F_u$ that does not contain $\eta_u$. Pick an element $p_{u_j+1,j} \in \eta_u \backslash G$. Then $j > 1$, $u_j \leq 1$ and so if $F_u$ is not the first facet in the ordering, then there exists $i < j$ such that $u_i > 0$ because $u >
(0,1,2,2,\ldots,2)$ and of the form described above. Pick $i$ such that $i < j$ and $u_i > 0$ and consider
$F_{u+e_j-e_i} = F_u \backslash \{p_{u_j+1,j} \} \cup \{p_{u_i,i} \}$. Then $u+e_j-e_i < u$ and
$G$ is a face of $F_{u+e_j-e_i}$. Conversely, suppose $G$ is a face of $F_u$ that is also a face of $F_{u'}$ where $u' < u$. Since $\sum u'_j = \sum u_j$, there exists some $j > 1$ such that $u'_j > u_j$. Therefore, $G$ does not contain $p_{u_j+1,j}$ which belongs to $\eta_u$. Therefore, $\eta_u$ is not contained in $G$.
\endproof

\section{A Toric Perspective}

In this section we examine multiview ideals $J_A$ that are toric.
For an introduction to toric ideals we refer the reader to \cite{GBCP}.
We now
assume that, for each camera $i$, each of the 
four torus fixed points in $\PP^3$ either is the camera position
or is mapped to a torus fixed point in $\PP^2$. 
This implies  $n \leq 4$. We 
fix $n=4$ and  $f_i = e_i$ for $i=1,2,3,4$. Up to permuting and rescaling columns,
our assumption implies that the configuration $A$ equals
$$
\begin{small}
 A_1 = \begin{bmatrix} 0 \! & \! 1 \! & \! 0 \! & \! 0 \\
                                           0 \! & \! 0 \! & \! 1 \! & \! 0 \\
                                           0 \! & \! 0 \! & \! 0 \! & \! 1  \end{bmatrix} \! ,\,\,
 A_2 = \begin{bmatrix} 1 \! & \! 0 \! & \! 0 \! & \! 0 \\
                                           0 \! & \! 0 \! & \! 1 \! & \! 0 \\
                                           0 \! & \! 0 \! & \! 0 \! & \! 1  \end{bmatrix} \!,\,\,
A_3 = \begin{bmatrix} 1 \! & \! 0 \! & \! 0 \! & \! 0 \\
                                           0 \! & \! 1 \! & \! 0 \! & \! 0 \\
                                           0 \! & \! 0 \! & \! 0 \! & \! 1  \end{bmatrix} \!,\,\,
A_4 = \begin{bmatrix} 1 \! & \! 0 \! & \! 0 \! & \! 0 \\
                                           0 \! & \! 1 \! & \! 0 \! & \! 0 \\
                                           0 \! & \! 0 \! & \! 1 \! & \! 0  \end{bmatrix} \! .
\end{small}
$$
For this camera configuration, the multiview ideal $J_A$ is indeed a toric ideal:

\begin{proposition} \label{prop:toricideal}
The ideal $J_A$ is obtained  by  eliminating the diagonal unknowns $w_1$, $w_2$, $w_3$ and $w_4$
from the ideal
of $2 \times 2$-minors of the $4 \times 4$-matrix
\begin{equation}
\label{eq:fourbyfour}
\begin{pmatrix}
w_1   & x_2 & x_3 & x_4 \\
x_1 & w_2   & y_3 & y_4 \\
y_1 & y_2 & w_3   & z_4 \\
z_1 & z_2 & z_3 & w_4
\end{pmatrix}.
\end{equation}
This toric ideal is minimally generated by six quadrics and four cubics:
\begin{small}
$$ \! \begin{matrix} J_A \,=\, \langle
y_1 y_4{-}x_1 z_4, y_3 x_4{-}x_3 y_4, y_2 x_4{-}x_2 z_4, z_1 y_3{-}x_1 z_3, z_2 x_3{-}x_2 z_3,
     z_1 y_2{-}y_1 z_2 ,\\ \qquad \quad
     y_2 z_3 y_4-z_2 y_3 z_4, \,y_1 z_3 x_4-z_1 x_3 z_4,\, x_1 z_2 x_4-z_1 x_2 y_4,\,
     x_1 y_2 x_3-y_1 x_2 y_3 \rangle
     \end{matrix}
$$
\end{small}
\end{proposition}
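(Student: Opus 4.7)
The plan is to derive the first assertion from the general elimination setup of Section~2 via a well-chosen lift of $A$, and then to compute the ten generators of $J_A$ by direct elimination.

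\textbf{First assertion.} I set $b_i := e_i^T \in K^4$, which spans $\ker(A_i)$, and let $B_i = \bmat{b_i \\ A_i}$. Each $B_i$ is a $4 \times 4$ permutation matrix (for example $B_1 = I_4$, while $B_2$ swaps rows $1$ and~$2$), and a direct check verifies that $B_i^{-1} \hat p_i$ equals the $i$th column of the matrix~(\ref{eq:fourbyfour}), where $\hat p_i := (w_i, x_i, y_i, z_i)^T$. Hence the ideal $J^B$ generated by the $2 \times 2$-minors of~(\ref{eq:inverseB}) coincides with the ideal of $2 \times 2$-minors of~(\ref{eq:fourbyfour}), and~(\ref{eq:elimideal}) gives $J_A = J^B \cap K[x,y,z]$.

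\textbf{Toric interpretation and membership.} The specialized $\phi_A$ is monomial: $x_i, y_i, z_i$ map to the three $X_k$ (in the order prescribed by the rows of $A_i$) with $k \neq i$. Hence $J_A$ is the toric kernel of the resulting $K$-algebra homomorphism $K[x,y,z] \to K[X_1, \ldots, X_4]$. Each of the ten polynomials in the statement is a binomial whose two monomials evaluate to the same monomial in the $X_k$ (for instance $y_1 y_4 = X_3 X_2 = x_1 z_4$ and $y_2 z_3 y_4 = X_3 X_4 X_2 = z_2 y_3 z_4$), so they all lie in $J_A$.

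\textbf{Generation and minimality.} To show the ten polynomials generate, I would eliminate $w_1, \ldots, w_4$ from $J^B$ directly. Among the $36$ many $2 \times 2$-minors of~(\ref{eq:fourbyfour}), the six that avoid every diagonal entry are precisely the six listed quadrics; each remaining minor contains one or two of the $w_k$, and forming appropriate S-polynomial reductions to cancel each $w_k$ produces, modulo the quadrics, exactly the four listed cubics. A short Gr\"obner basis computation (for instance in Macaulay2) is the cleanest way to make this rigorous. For minimality, use the $\ZZ^4$-grading $\deg x_i = \deg y_i = \deg z_i = e_i$: the six quadric multidegrees $\{e_i + e_j : i < j\}$ and four cubic multidegrees $\{e_i + e_j + e_k : i < j < k\}$ are pairwise distinct, so each quadric is irredundant. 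Irredundancy of each cubic reduces, by the $S_4$-symmetry permuting cameras, to a single $K$-vector-space dimension count in multidegree $e_1 + e_2 + e_3$, showing the cubic is not in the span of the three relevant quadrics times linear forms. The main obstacle is the bookkeeping of the elimination step: done by hand this is tedious, so I would ultimately delegate it to a computer algebra system.
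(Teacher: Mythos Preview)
Your proof is correct and follows essentially the same route as the paper: choose $b_i = e_i^T$ so that each $B_i$ is a permutation matrix, identify the matrix~(\ref{eq:inverseB}) with~(\ref{eq:fourbyfour}), invoke~(\ref{eq:elimideal}), and delegate the actual elimination to a computer algebra system. The paper does exactly this and nothing more for generation; your additional remarks on the explicit monomial parametrization and the $\ZZ^4$-graded minimality argument go a bit beyond what the paper spells out, but they are sound and in the same spirit.
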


\begin{proof}
We extend $A_i$ to a $4 \times 4$-matrix $B_i$ as in Section~2 by adding the
row $b_i = e_i^T$. The $B_i$'s are then all permutation matrices,
and  the matrix in (\ref{eq:inverseB}) equals the matrix in (\ref{eq:fourbyfour}).
The ideal $J^B$ is generated by the $2 \times 2$ minors of that matrix of unknowns.
The multiview ideal is $J_A = J^B \cap K[x,y,z]$. We find the listed
binomial generators  by performing  the elimination with a
computer algebra package such as {\tt Macaulay2}.
 Toric ideals are precisely those prime
ideals generated by binomials and hence $J_A$ is a toric ideal.
\end{proof}

\begin{remark}
The {\em normalized coordinate system in multiview geometry}
proposed by Heyden and  {\AA}str{\"o}m \cite{HA} is different from ours
and does not lead to toric varieties. Indeed, if one uses the camera matrices in
\cite[\S 2.3]{HA}, then  $J_A$ is also generated by six quadrics
and four cubics, but seven of the ten generators are not binomials.
One of the cubic generators has six terms. \qed
\end{remark}

In commutative algebra, it is customary to represent
toric ideals by  integer matrices. Given $\mathcal{A} \in \mathbb N^{p \times q}$ with columns
$a_1, \ldots, a_q$, the {\em toric ideal} of $\mathcal{A}$ is
$$ I_\mathcal{A} \,\, := \,\, \langle t^u - t^v \,:\, \mathcal{A}u 
\,\,= \,\, \mathcal{A}v, \, u, v \in \mathbb N^q \rangle \,\, \subset \,\, K[t] \,:= \,K[t_1, \ldots, t_q], $$
where $t^u$ represents the monomial $t_1^{u_1}t_2^{u_2} \cdots t_q^{u_q}$.
If $\mathcal{A'}$ is the submatrix of $\mathcal A$ obtained by deleting the columns indexed by $j_1, \ldots, j_s$ for some $s < q$, then the toric ideal $I_{\mathcal{A'}}$ equals the elimination ideal $I_\mathcal{A} \cap K[t_j \,:\,
j \not \in \{j_1, \ldots, j_s\}]$; see \cite[Prop.~4.13 (a)]{GBCP}. The integer matrix $\mathcal{A}$
for our toric multiview ideal $J_A$ in Proposition \ref{prop:toricideal}
is the following {\em Cayley matrix}  of format $8 \times 12$:
$$
\mathcal{A} \,\,\, = \,\,\,
\begin{bmatrix}
 A_1^T &  A_2^T &  A_3^T &  A_4^T \\
{\bf 1} & {\bf 0} & {\bf 0} & {\bf 0} \\
{\bf 0} & {\bf 1} & {\bf 0} & {\bf 0} \\
{\bf 0} & {\bf 0} & {\bf 1} & {\bf 0} \\
{\bf 0} & {\bf 0} & {\bf 0} & {\bf 1}
\end{bmatrix}
$$
where ${\bf 1} = [1 \,1\,1 ]$ and ${\bf 0} = [0\,0\,0 ]$.
This matrix $\mathcal{A}$ is obtained from the following $8 \times 16$ matrix
by deleting columns $1, 6, 11$ and $16$:
\begin{equation}
\label{eq:transportation}
\begin{bmatrix}
I_4 & I_4 & I_4 & I_4 \\
{\bf 1} & {\bf 0} & {\bf 0} & {\bf 0} \\
{\bf 0} & {\bf 1} & {\bf 0} & {\bf 0} \\
{\bf 0} & {\bf 0} & {\bf 1} & {\bf 0} \\
{\bf 0} & {\bf 0} & {\bf 0} & {\bf 1}
\end{bmatrix}
\end{equation}
 The vectors ${\bf 1}$ and ${\bf 0}$ now have length four, $I_4$ is the $4 \times 4$ identity matrix and we assume that the columns of (\ref{eq:transportation}) are indexed by $$w_1, x_1, y_1,z_1, x_2,w_2,y_2,z_2,x_3,y_3,w_3,z_3,x_4,y_4,z_4,w_4.$$ The matrix 
 (\ref{eq:transportation}) represents the direct product of two tetrahedra, 
 and its toric ideal is known  (by  \cite[Prop. 5.4]{GBCP})
 to be  generated by the $2 \times 2$ minors of (\ref{eq:fourbyfour}).
        Its elimination ideal in the ring $K[x,y,z]$ is $I_\mathcal{A}$, and hence
        $J_A = I_\mathcal{A}$.

        \begin{figure}
\includegraphics[width=0.45\linewidth]{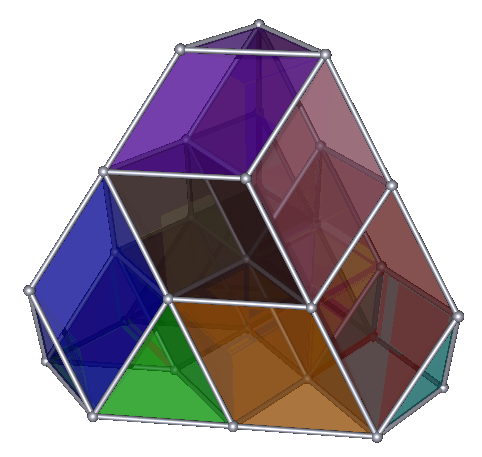}\hfill
\includegraphics[width=0.55\linewidth]{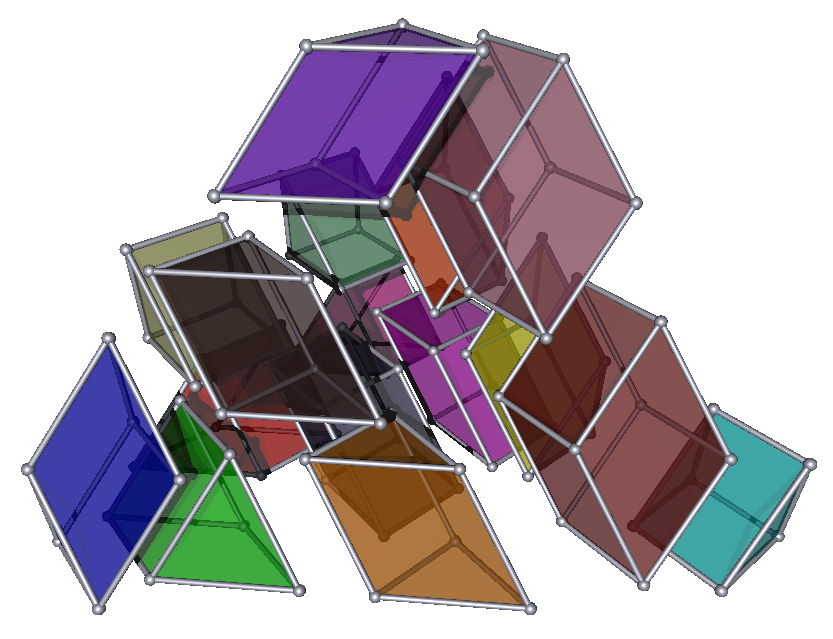}
\caption{
Initial monomial ideals of the toric multiview variety correspond to mixed subdivisions of
the truncated tetrahedron $P$. These have $4$ cubes and $12$ triangular prisms.}
\label{V4_Toric_Blowup}
\end{figure}

The matrix $\mathcal{A}$ has rank $7$ and its columns determine a
 $6$-dimensional polytope ${\rm conv}(\mathcal{A})$
  with $12$ vertices.
 The normalized volume of $ {\rm conv}(\mathcal{A})$ equals $16$, and this
is the degree of the $6$-dimensional projective toric variety in $\PP^{11}$ defined by $J_A$.
In our context, we don't care for the $6$-dimensional variety in $\PP^{11}$ 
but we are interested in the threefold in
       $\PP^2 {\times} \PP^2 {\times} \PP^2 {\times} \PP^2$ cut out by $J_A$.
       To study this combinatorially,   we apply the {\em Cayley trick}. This means we
 replace the $6$-dimensional polytope
${\rm conv}(\mathcal{A})$ by the $3$-dimensional polytope
$$  P \,\, = \,\, {\rm conv}(A_1^T) + {\rm conv}(A_2^T) + {\rm conv}(A_3^T) + {\rm conv}(A_4^T) . $$
This is the Minkowski sum of the four triangles that form the facets of the standard tetrahedron.
Equivalently, $P$ is the scaled tetrahedron $4 \Delta_3$ with its vertices sliced off.
Triangulations of $\mathcal{A}$ correspond to mixed subdivisions of $P$.
Each $6$-simplex in $\mathcal{A}$ becomes a cube or a triangular prism in $P$.
Each mixed subdivision has  four cubes $\PP^1 \times \PP^1 \times \PP^1$ 
and twelve triangular prisms $\PP^2 \times \PP^1$.
Such a mixed subdivision of $P$ is shown in Figure \ref{V4_Toric_Blowup}.
Note the similarities and differences relative to the complex $V(M_4)$ in Example \ref{ex:M_4}.

\smallskip

We worked out a complete classification of all mixed subdivisions of $P$:

\begin{theorem} \label{thm:1068}
The truncated tetrahedron $P$ has $1068$ mixed subdivisions, one
for each triangulation of the Cayley polytope ${\rm conv}(\mathcal{A})$.
Precisely $1002$ of the $1068$ triangulations are regular.
The regular triangulations form $48$ symmetry classes, and the
non-regular triangulations form $7$ symmetry classes.
\end{theorem}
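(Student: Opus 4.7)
The plan is to reduce the enumeration to a finite computation on the Cayley polytope $\mathrm{conv}(\mathcal{A})$ and then to organize the output under the natural symmetry group acting on it. By the Cayley trick (see for instance \cite{GBCP}), mixed subdivisions of $P=\mathrm{conv}(A_1^T)+\mathrm{conv}(A_2^T)+\mathrm{conv}(A_3^T)+\mathrm{conv}(A_4^T)$ are in bijection with triangulations of the $6$-dimensional Cayley polytope $\mathrm{conv}(\mathcal{A})$, and this bijection restricts to one between coherent (i.e.\ regular) objects on both sides. Hence it suffices to enumerate all triangulations of the $8\times 12$ configuration $\mathcal{A}$, classify them as regular or non-regular, and then quotient by the appropriate symmetry group.

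The first step is the regular enumeration. Since $\mathcal{A}$ has only $12$ columns in $\mathbb{R}^8$, the secondary fan can be computed directly: using the Gr\"obner fan of the toric ideal $I_\mathcal{A}=J_A$, whose maximal cones are in bijection with the regular triangulations of $\mathcal{A}$, one obtains a list of $1002$ regular triangulations. Software such as \texttt{Gfan}, \texttt{CaTS} or \texttt{TOPCOM} handles a configuration of this size routinely, and the output can be certified by verifying that each listed triangulation is the support of an initial ideal of $J_A$ with respect to a weight vector realizing its defining cone. The total count of mixed cells matches the prediction from Proposition~\ref{prop:toricideal}: each triangulation yields exactly $4$ cubes and $12$ prisms in the corresponding mixed subdivision of $P$.

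The second step is to produce the $66$ additional non-regular triangulations. For this we run the bistellar flip algorithm (as implemented in \texttt{TOPCOM}) starting from any regular triangulation and traversing the flip graph of $\mathcal{A}$. Connectivity of the flip graph for a configuration of this size, together with termination of the breadth-first search, yields exactly $1068$ triangulations in total; subtracting the $1002$ regular ones accounts for the $66$ non-regular triangulations. The non-regularity of each of these is certified by exhibiting an infeasibility certificate for the linear program that tests whether a weight vector inducing the triangulation exists.

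Finally, the group acting on the classification is the symmetric group $S_4$ permuting the four cameras, together with the simultaneous $S_3$-action permuting the coordinates $(x_i,y_i,z_i)$ inside each $\mathbb{P}^2$-factor (equivalently, the three vertices of each triangle $\mathrm{conv}(A_i^T)$, which is what preserves the Cayley structure); this is the group of combinatorial automorphisms of $\mathcal{A}$ visible from (\ref{eq:transportation}). Computing the orbits of this group on the list of $1068$ triangulations yields the $48$ regular classes and the $7$ non-regular classes. The main obstacle is not conceptual but computational: verifying non-regularity of the $66$ exceptional triangulations and ensuring the flip search is exhaustive. Both can be discharged by independent reimplementations in \texttt{Macaulay2} and \texttt{Sage}, which is the route we follow.
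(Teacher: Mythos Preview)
Your overall plan---compute the secondary fan to get the regular triangulations, then extend to all triangulations, then pass to orbits---matches the paper's, and the regular count via \texttt{Gfan} is exactly what the paper does. The substantive divergence is in how you reach the non-regular triangulations. The paper does \emph{not} traverse the flip graph. Instead it observes that $\mathcal{A}$ is unimodular, invokes the bijection between triangulations of a unimodular configuration and $\mathcal{A}$-graded monomial ideals \cite[Lemma~10.14]{GBCP}, and then runs \texttt{CaTS} to enumerate the latter, obtaining $1068$ and hence $66$ non-regular ones.

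Your flip-graph argument, as written, has a gap: you assert that ``connectivity of the flip graph for a configuration of this size'' guarantees the breadth-first search is exhaustive, but flip-graph connectivity is not a theorem for general point configurations and you give no reason it holds for this particular $\mathcal{A}$. Terminating with $1068$ triangulations only tells you the connected component of your starting triangulation has that many members. To close this, either switch to an enumeration routine in \texttt{TOPCOM} that does not presuppose connectivity, or---more in line with the paper---note unimodularity of $\mathcal{A}$ and certify exhaustiveness through the $\mathcal{A}$-graded correspondence.

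One smaller point: the symmetry group should be $(S_3)^4 \rtimes S_4$, with an \emph{independent} copy of $S_3$ acting on each triple $(x_i,y_i,z_i)$; your phrase ``simultaneous $S_3$-action'' reads as the diagonal $S_3$, which is too small to account for only $48$ regular and $7$ non-regular orbits.
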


We offer a brief discussion of this result and how it was obtained.
Using the software {\tt Gfan} \cite{Gfan}, we found that $I_\mathcal{A}$ has 
1002 distinct monomial initial ideals. These ideals fall into 48 symmetry classes under the 
natural action of $(S_3)^4 \rtimes S_4$ 
on $K[x,y,z]$ where the $i$-th copy of $S_3$ permutes the variables $x_i, 
y_i,z_i$, and $S_4$ permutes the labels of the cameras.
The matrix $\mathcal{A}$ being unimodular, each initial ideal of $I_\mathcal{A}$
is squarefree and  each triangulation of $\mathcal{A}$ is unimodular.
To calculate all non-regular triangulations, we used the
bijection between triangulations and $\mathcal{A}$-graded monomial ideals 
 in \cite[Lemma 10.14]{GBCP}. Namely, we ran a second
computation using the software package {\tt CaTS} \cite{CaTS}
that lists all $\mathcal{A}$-graded monomials ideals, and we
found their number to be  $1068$, and hence $\mathcal{A}$ has 66 non-regular triangulations.

 \begin{figure}
\includegraphics[width=0.36\linewidth]{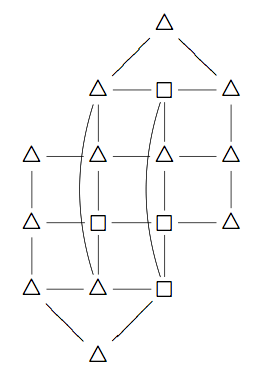}
\vskip -0.6cm
\caption{The dual graph of the mixed subdivision given by  $Y_1$.}
\label{fig:dualgraph12gens}
\end{figure}

The $48$ distinct initial monomial ideals of the toric multiview ideal $J_A$
can be distinguished by various invariants. First, their
numbers of generators range from $12$ to $15$.
  There is precisely one initial ideal with $12$ generators:
    \begin{align*}
Y_1  \,\,\, = \,\,\,  & \langle \,
 y_1z_2, z_1 y_3,  x_1z_4,  z_2 x_3, y_2 x_4, x_3y_4, \\  & \,\,\,\,
  x_1 y_2 x_3, z_1 y_2 x_3, x_1z_2 x_4, 
 z_1 x_3 z_4, z_2 y_3 x_4, z_2 y_3 z_4\,
\rangle.
 \end{align*}
At the other extreme, there are two classes of initial ideals with $15$ generators.
These are the only classes having quartic generators, as all ideals with $\leq 14$ generators
require only quadrics and cubics. A representative is
\begin{align*}
Y_2 \,\,\, = \,\,\, & \langle \,
z_1 y_2, x_1 z_3, x_1 z_4, x_2 z_3, y_2 x_4, y_3 x_4,
\, y_1 z_2 x_3 y_4, \\ & \,\,\,\,
x_1 y_2 x_3, x_1 z_2 x_3,  
x_1 z_2 x_4, x_4 z_2 y_1, 
y_1 z_3 x_4, y_1 z_3 y_4, 
y_2 x_3 y_4, y_2 z_3 y_4\,
\rangle.
\end{align*}
All non-regular $\mathcal{A}$-graded monomial ideal have $14$ generators.
One of them~is
\begin{align*}
Y_3 \,\,\, = \,\,\, & \langle \,
z_1y_2, z_1y_3, x_1z_4, x_2z_3, x_2z_4, 
y_3x_4,\, x_1y_2z_3, y_1x_2y_3, \\ &  \,\,\,\,
x_1y_2x_4, x_1z_2x_4, 
x_1z_3x_4, y_1z_3x_4, 
y_2z_3x_4, y_2z_3y_4\,
\rangle.
\end{align*}

A more refined combinatorial invariant of the $55$ types
is the dual graph of the mixed subdivision of $P$. The $16$ vertices of this
graph are labeled with squares and triangles to denote cubes and triangular prisms respectively,
and edges represent common facets.
The graph for $Y_1$ is shown in Figure~\ref{fig:dualgraph12gens}.

For complete information on the classification in
Theorem \ref{thm:1068} see the website \
\texttt{www.math.washington.edu/$\sim$aholtc/HilbertScheme}.

\smallskip

That website also contains the same information for the
toric multiview variey in the easier case of $n=3$ cameras. Taking
 $A_1, A_2$ and $A_3$ as camera matrices, the
corresponding Cayley matrix has format $7 \times 9$ and rank $6$:
$$
\mathcal{A} \,\,\, = \,\,\,
\begin{bmatrix}
 A_1^T & A_2^T &  A_3^T  \\
{\bf 1} & {\bf 0} & {\bf 0} \\
{\bf 0} & {\bf 1} & {\bf 0} \\ 
{\bf 0} & {\bf 0} & {\bf 1} 
\end{bmatrix}
\,\,\, = \,\,\,
\begin{bmatrix}
\,0 & 0 & 0 & 1 & 0 & 0 & 1 & 0 & 0 \,\\ 
\,1 & 0 & 0 & 0 & 0 & 0 & 0 & 1 & 0 \,\\
\,0 & 1 & 0 & 0 & 1 & 0 & 0 & 0 & 0 \,\\
\,0 & 0 & 1 & 0 & 0 & 1 & 0 & 0 & 1 \,\\
\, 1& 1 & 1 & 0 & 0& 0 & 0 & 0 & 0 \,\\
\, 0& 0 & 0 & 1 & 1& 1 & 0 & 0 & 0 \,\\
\, 0& 0 & 0 & 0 & 0& 0 & 1 & 1 & 1 \,
\end{bmatrix}
$$
This is the transpose of the matrix $A_{\{123\}}$ in (\ref{eq:drei}) when
evaluated at $x_1 = y_1 = \cdots = z_3 = 1$.
The corresponding $6$-dimensional Cayley polytope $ {\rm conv}(\mathcal{A})$ has
$9$ vertices and normalized volume $7$, and the toric multiview ideal equals
\begin{equation}
\label{eq:4binomials}
 J_A \,\,= \,\, \langle z_1 y_3 - x_1 z_3, z_2 x_3 - x_2 z_3,
     z_1 y_2 -y_1 z_2 ,   x_1 y_2 x_3-y_1 x_2 y_3 \rangle. 
     \end{equation}
We note that the quadrics cut out $V_A$ plus an extra component
$\PP^1 \times \PP^1 \times \PP^1$:
\begin{equation}
\label{eq:HAThm56}
\langle z_1 y_3 - x_1 z_3, z_2 x_3 - x_2 z_3,
     z_1 y_2 -y_1 z_2  \rangle \,\, = \,\, J_A     \cap \langle z_1,z_2,z_3 \rangle
\end{equation}
 This equation is precisely \cite[Theorem 5.6]{HA}
 but written in toric coordinates.
     
     \smallskip
     
The toric ideal $J_A$ has precisely $20$ initial monomial ideals, in three symmetry classes,
one for each mixed subdivision of the $3$-dimensional polytope
$$  P \,\, = \,\, {\rm conv}(A_1^T) + {\rm conv}(A_2^T) + {\rm conv}(A_3^T) . $$
Thus $P$ is the Minkowski sum of three of the four triangular facets of the
regular tetrahedron. Each mixed subdivision of $P$
uses one cube $\PP^1 \times \PP^1 \times \PP^1$
and six triangular prisms $\PP^2 \times \PP^1$.
A picture of one of them is seen in Figure~\ref{V3_J8_Blowup}.

\begin{remark} \label{rmk:important}
Our  toric study in this section is universal in the sense that {\bf every}
multiview variety $V_A$ for $n\leq 4$ cameras in linearly general 
position in $\PP^3$ is isomorphic to the toric multiview variety under
a  change of coordinates in $(\PP^2)^n$. This fact can be proved
using the coordinate systems for the Grassmannian ${\rm Gr}(4,3n)$
furnished by the construction in \cite[\S 4]{SZ}.
Here is how it works for $n=4$. The coordinate change via ${\PGL}(3,K)^4$ gives
\begin{equation}
\label{eq:supportset} \,\,
 \bmat{ A_1^T \! & \! A_2^T \! & \! A_3^T \! & \! A_4^T } \,\,=\,\, 
\left[ \,
\begin{matrix} 0 & 0 & 0  \\ * & * & *  \\ * & * & *  \\ * & * & *  \\ \end{matrix} \quad\,\,
\begin{matrix} * & * & *  \\  0 & 0 & 0  \\ * & * & *  \\ * & * & *  \\ \end{matrix} \quad\,\,
\begin{matrix} * & * & *  \\ * & * & *  \\ 0 & 0 & 0  \\ * & * & *  \\ \end{matrix} \quad\,\,
\begin{matrix} * & * & *  \\ * & * & *  \\ * & * & *  \\ 0 & 0 & 0  \\ \end{matrix}
\, \right] \!,
\end{equation}
where the $3 \times 3$-matrices indicated by the stars in the four blocks are invertible.
Now, the $4 \times 12$-matrix (\ref{eq:supportset}) gives a {\em support set} $\Sigma$
that satisfies the conditions in \cite[Proposition 3.1]{SZ}. The corresponding
Zariski open set $\mathcal{U}_\Sigma$ of the Grassmannian ${\rm Gr}(4,12)$
is non-empty. In fact, by \cite[Remark 4.9(a)]{SZ}, the set $\mathcal{U}_\Sigma$
represents configurations whose cameras  $f_1, f_2, f_3,f_4$ are not coplanar.
Now, Theorem 4.6 in \cite{SZ} completes our proof because (the universal Gr\"obner basis of)
the ideal $J_A$ depends only on 
the point in $\mathcal{U}_\Sigma \subset {\rm Gr}(4,12)$ represented by (\ref{eq:supportset})
and not on the specific camera matrices $A_1,\ldots,A_4$. \qed
\end{remark}

\section{Degeneration of Collinear Cameras}

In this section we consider a family of collinear camera positions.
The degeneration of the associated multiview variety will play a key role in
 proving our main results in Section 6, but they may be of independent  interest.
Collinear cameras have been studied in computer vision, for example in \cite{HartleyZisserman}.

Let $\varepsilon $ be a parameter and fix the configuration
$A(\varepsilon) := (A_1, \ldots, A_n)$ where
$$A_i \,:=\ \left[ \begin{array}{cccc} 1 & 1 & 0 & 0 \\ 1 & 0 & 1 & 0 \\ \varepsilon^{n-i} & 0 & 0 & 1 \end{array} \right]$$
 The focal point of camera $i$ is $f_i = (-1:1:1:\varepsilon^{n-i})$ and hence the $n$ cameras given by $A(\varepsilon)$ are collinear in $\PP^3$. Note that these camera matrices stand in sharp contrast to those for which $A$ is generic which was the focus of Sections 2 and 3. They also differ from the toric situation in Section 4. 
 
 We consider the multiview ideal $J_{A(\varepsilon)}$ in the polynomial ring
 $K(\varepsilon)[x,y,z]$, where $K(\varepsilon)$ is the field of rational functions in $\varepsilon$ with coefficients in $K$. 
 Then  $J_{A(\varepsilon)}$ has the Hilbert function (\ref{Vn_Hilbert_function}),
 by Theorem~\ref{thm:multiview ideals on Hilbert scheme}.
  Let $\mathcal{G}_n$ be the set of polynomials in $K(\varepsilon)[x,y,z]$ consisting of the ${n \choose 2}$ quadratic 
polynomials
\begin{equation}
\label{eq:sec5quadrics}
 x_iy_j - x_jy_i \qquad \hbox{for} \,\,\,\, 1 \leq i < j \leq n 
 \end{equation}
and the $3{n \choose 3}$ cubic polynomials below for all choices of $1 \leq i < j < k \leq n$:
\begin{equation}
\label{eq:sec5cubics}
 \begin{array}{c}
 (\varepsilon^{n-k}-\varepsilon^{n-i}) x_iz_jx_k + 
(\varepsilon^{n-j}-\varepsilon^{n-k}) z_ix_jx_k + 
(\varepsilon^{n-i}-\varepsilon^{n-j}) x_ix_jz_k  \\
(\varepsilon^{n-k}-\varepsilon^{n-i}) y_iz_jy_k + 
(\varepsilon^{n-j}-\varepsilon^{n-k}) z_iy_jy_k + 
(\varepsilon^{n-i}-\varepsilon^{n-j})y_iy_jz_k \\
(\varepsilon^{n-k}-\varepsilon^{n-i}) y_iz_jx_k + 
(\varepsilon^{n-j}-\varepsilon^{n-k}) z_iy_jx_k + 
(\varepsilon^{n-i}-\varepsilon^{n-j}) y_ix_jz_k 
\end{array}
\end{equation}
Let $L_n$ be the ideal generated by (\ref{eq:sec5quadrics})
and the following binomials from the first two terms in (\ref{eq:sec5cubics}):
$$  L_n  \,:=\,  \bigl\langle x_iy_j - x_jy_i \,: \, 1 {\leq} i {<} j {\leq}n \bigr\rangle  +   \left\langle 
\begin{array}{c} 
\! x_iz_jx_k - z_ix_jx_k,\\
\! y_iz_jy_k - z_iy_jy_k,\\
\! y_iz_jx_k - z_iy_jx_k
\end{array} : \, 1 {\leq} i  {<} j {<} k {\leq} n \right\rangle \! .$$
Let $N_n$ be the ideal generated by the leading monomials 
in  (\ref{eq:sec5quadrics}) and  (\ref{eq:sec5cubics}):
$$  N_n \, \,:= \,\,  \bigl\langle x_iy_j \,: \,1 {\leq} i {<} j {\leq} n \bigr\rangle
 \, +\,   \bigl\langle 
x_iz_jx_k, \, y_iz_jy_k, \,y_iz_jx_k \,:  \, 1 {\leq} i {<} j {<} k {\leq} n  \bigr\rangle.
$$
The main result in this section is the following construction
of a two-step flat degeneration $J_{A(\varepsilon)} \rightarrow L_n \rightarrow N_n$. This 
gives an explicit realization of~(\ref{eq:TriBiMono}).
We note that $V_{A(\varepsilon)}$ can be seen as a variant of the
{\em Mustafin varieties} in~\cite{CHSW}.

\begin{theorem} \label{thm:sec5main}
The three ideals $J_{A(\varepsilon)}$,  $L_n$ and $N_n$ satisfy the following:
\begin{enumerate}
\item[(a)] The multiview ideal  $J_{A(\varepsilon)}$ is generated by the set $\,\mathcal{G}_n$.
\item[(b)] The binomial ideal $L_n$ equals the special fiber of $J_{A(\varepsilon)}$ for $\varepsilon = 0$.
\item[(c)] The monomial ideal $N_n$ is the initial ideal of $L_n$,
in the Gr\"obner basis sense, with respect to the
lexicographic term order with $ x \succ y \succ z$.
\end{enumerate}
\end{theorem}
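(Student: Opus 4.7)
The plan is to prove (a), (b), and (c) in parallel via a Hilbert-function collapse applied to the chain
$$ J_{A(\varepsilon)} \;\longrightarrow\; L_n \;\longrightarrow\; N_n, $$
where the first arrow is the $\varepsilon$-adic flat degeneration and the second is passage to the lex initial ideal. Each operation weakly increases the quotient's dimension in every multidegree, so it will suffice to match the extremes.

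First I would verify $\G_n \subseteq J_{A(\varepsilon)}$. On the image of $\phi_{A(\varepsilon)}$, a point satisfies $p_i = \lambda_i(X_1{+}X_2,\,X_1{+}X_3,\,\varepsilon^{n-i}X_1{+}X_4)^{T}$ for some nonzero $\lambda_i$. The first two coordinates are proportional to $(X_1{+}X_2):(X_1{+}X_3)$ independently of $i$, which yields the quadrics (\ref{eq:sec5quadrics}). For the cubics, observe that $z_\ell/x_\ell$ is an affine function of $\varepsilon^{n-\ell}$ with coefficients depending only on $X$; so for any triple $i<j<k$ the three rows $(x_\ell,\ \varepsilon^{n-\ell}x_\ell,\ z_\ell)$ are linearly dependent, and the $3\times 3$ determinant identity reproduces, up to sign, the first cubic of (\ref{eq:sec5cubics}). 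Replacing $x$ by $y$, or mixing them, gives the other two families. This establishes the containment $\langle\G_n\rangle\subseteq J_{A(\varepsilon)}$.

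Next I would handle the degeneration. Each cubic in $\G_n$ has smallest $\varepsilon$-exponent equal to $n-k$, attained in its first two terms; dividing by $\varepsilon^{n-k}$ produces a generator in $K[\varepsilon][x,y,z]$ whose reduction modulo $\varepsilon$ is exactly the corresponding binomial generator of $L_n$, while the quadrics specialize trivially. Hence the $\varepsilon=0$ flat limit of $\langle\G_n\rangle$ contains $L_n$. A direct check shows that under the lex order $x\succ y\succ z$ (refining the Section~2 ordering within each block), each binomial generator of $L_n$ has the corresponding monomial of $N_n$ as its leading term, so $N_n\subseteq \tin_\prec(L_n)$. For the combinatorial input I would compute the $\ZZ^n$-graded Hilbert function of $K[x,y,z]/N_n$ by enumerating standard monomials $x^ay^bz^c\notin N_n$ in each multidegree $u$: the quadric obstructions forbid any pair $(a_i,b_j)$ nonzero with $i<j$, while the three cubic families impose triangle-avoidance conditions in which $z$ occupies the middle coordinate. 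Splitting into the two cases $a=0$ versus a unique $a_i\ne 0$, as in the proof of Theorem~\ref{thm:hilbertfct}, I would set up a bijection with standard monomials of $M_n$ to obtain
$$ \dim(K[x,y,z]/N_n)_u \;=\; \binom{u_1+\cdots+u_n+3}{3}-\sum_{i=1}^n \binom{u_i+2}{3}. $$

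To finish, combining the preceding steps with the Hilbert-function equality from Theorem~\ref{thm:multiview ideals on Hilbert scheme} gives, in each multidegree $u\in\NN^n$,
$$ \dim (K[x,y,z]/J_{A(\varepsilon)})_u \,\leq\, \dim (K[x,y,z]/\langle\G_n\rangle)_u \,\leq\, \dim (K[x,y,z]/L_n)_u \,\leq\, \dim (K[x,y,z]/N_n)_u, $$
whose extremes both equal (\ref{Vn_Hilbert_function}). Every inequality then collapses to equality, simultaneously giving $J_{A(\varepsilon)} = \langle\G_n\rangle$, $L_n$ as the flat limit at $\varepsilon=0$, and $\tin_\prec(L_n) = N_n$. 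The main obstacle is the enumeration in the Hilbert-function step: unlike $M_n$, the ideal $N_n$ treats $x$, $y$, $z$ asymmetrically (with $z$ pinned to the middle factor of every cubic), so the bijection with standard monomials of $M_n$ must be constructed carefully, presumably via a swap move exchanging $y$- and $z$-variables at a distinguished index.
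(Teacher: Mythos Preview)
Your global architecture is exactly the paper's: establish the chain of containments
\[
N_n \,\subseteq\, \tin_\prec(L_n), \qquad L_n \,\subseteq\, \tin(\langle \G_n\rangle), \qquad \langle \G_n\rangle \,\subseteq\, J_{A(\varepsilon)},
\]
then force all of them to be equalities by matching the multigraded Hilbert functions of the two ends via Theorem~\ref{thm:multiview ideals on Hilbert scheme} and a direct computation for $N_n$. Your verification that $\G_n \subseteq J_{A(\varepsilon)}$ (by parametrizing the image rather than expanding determinants of $A(\varepsilon)_\sigma$) and your treatment of the $\varepsilon\to 0$ and lex-initial passages are fine and agree with the paper.

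The one place where your sketch goes wrong is the Hilbert-function step for $N_n$, which you correctly flag as the crux. Your proposed case split ``$a=0$ versus a unique $a_i\neq 0$'' is borrowed from the proof of Theorem~\ref{thm:hilbertfct} for $M_n$, where it works because $x_ix_j\in M_n$. But $N_n$ contains no generator $x_ix_j$: its quadrics are $x_iy_j$ with $i<j$ only. Hence a standard monomial of $N_n$ can have arbitrarily many nonzero $x$-exponents (for instance $x_1x_2\cdots x_n$ is standard), and the dichotomy you propose does not partition the standard monomials. A bijection with the standard monomials of $M_n$ may well exist, but it cannot be organized around that case split, and the ``swap $y\leftrightarrow z$ at a distinguished index'' heuristic does not obviously repair this.

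The paper sidesteps the bijection entirely. Its Lemma~\ref{lem:N_Hilbert_function} observes that a monomial avoids $N_n$ if and only if it factors as $z^a y^b x^c z^d$ with the supports of $a,b,c,d$ forming consecutive (possibly overlapping at endpoints) blocks $[1,i]$, $[i,j]$, $[j,k]$, $[k,n]$. This structure is counted by a stars-and-bars argument: place three bars among $|u|+3$ slots, fill the four compartments with $z$'s, $y$'s, $x$'s, $z$'s in order, and observe that the only overcount occurs when all three bars land among the slots of a single index $i$, contributing exactly $\binom{u_i+2}{3}$. That yields~(\ref{Vn_Hilbert_function}) directly and closes the argument.
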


 The rest of this section is devoted to explaining and proving these results.
 Let us begin by showing that  $\mathcal{G}_n$ is a subset of $J_{A(\varepsilon)}$.
The determinant of 
$$ A(\varepsilon)_{\{ij\}} \,\, = \,\, \left[ \begin{array}{ccc} A_i & p_i & \bf{0} \\ A_j & \bf{0} & p_j \end{array} \right]$$
equals
$(\varepsilon^{n-j}-\varepsilon^{n-i})( x_iy_j - x_jy_i)$. Hence
$J_{A(\varepsilon)}$ contains (\ref{eq:sec5quadrics}), by the argument in
Lemma~\ref{lem:easyinclusion}.
Similarly, for any $1 {\leq} i {<} j {<} k {\leq} n$, consider the $9 \times 7$ matrix 
$$A(n)_{\{ijk\}} \,\,=\,\, \left[ \begin{array}{ccccccc}
1 & 1 & 0 &  0 & x_i & 0 & 0 \\
1 & 0 & 1 &  0 & y_i & 0 & 0 \\
\varepsilon^{n-i} & 0 & 0 &  1 & z_i & 0 & 0 \\
1 & 1 & 0 &  0 & 0 & x_j  & 0 \\
1 & 0 & 1 &  0 & 0 & y_j & 0 \\
\varepsilon^{n-j} & 0 & 0 &  1 & 0 & z_j  & 0 \\
1 & 1 & 0 &  0 & 0 & 0 & x_k  \\
1 & 0 & 1 &  0 & 0 & 0 & y_k  \\
\varepsilon^{n-k} & 0 & 0 &  1 & 0 & 0 & z_k 
\end{array}
\right].
 $$
The three cubics  (\ref{eq:sec5cubics}), in this order and up to sign, are the determinants of the $7 \times 7$ submatrices of $A(\varepsilon)_{\{ijk\}}$ obtained by deleting the rows corresponding to $y_j$ and $y_k$, the rows corresponding to 
$x_j$ and $x_k$, and the rows corresponding to $x_i$ and $y_k$ respectively. 
We conclude that $\mathcal{G}_n$ lies in $J_{A(\varepsilon)}$.

\smallskip

We next discuss part (b) of Theorem \ref{thm:sec5main}.
Every rational function $c(\varepsilon) \in K(\varepsilon)$ has a unique expansion as a Laurent series 
$c_1\varepsilon^{a_1} + c_2\varepsilon^{a_2} + \cdots $ where $c_i \in K$ and $a_1 < a_2 < \cdots$ are integers. The function ${\rm val}: K(\varepsilon) \rightarrow \ZZ$ 
given by $c(\varepsilon) \mapsto a_1$ is then a valuation on $K(\varepsilon)$, and $K[\![\varepsilon]\!] = 
\{ c \in K(\varepsilon) \,:\, {\rm val}(c) \geq 0 \}$ is its valuation ring. The unique maximal ideal in 
$K[\![\varepsilon]\!]$ is $m = \langle c \in K(\varepsilon) \,:\, {\rm val}(c) > 0 \rangle$.
The residue field $K[\![\varepsilon]\!]/m$ is isomorphic to $K$, so
there is a natural map $K[\![\varepsilon]\!] \rightarrow K$ that represents 
the evaluation at $\varepsilon= 0$.
The {\em special fiber} of an ideal $I \subset K(\varepsilon)[x,y,z]$
is the image of $I \cap K[\![\varepsilon]\!][x,y,z]$ under the induced map
$K[\![\varepsilon]\!][[x,y,z] \rightarrow K[x,y,z]$. The special fiber is denoted ${\rm in}(I)$.
It can be computed from $I$ by a variant of  Gr\"obner bases (cf.~\cite[\S 2.4]{TropicalBook}).

What we are claiming in Theorem \ref{thm:sec5main} (b) is the following identify
$$ {\rm in}( J_{A(\varepsilon)} ) \,\, = \,\, L_n \qquad {\rm in} \,\, K[x,y,z]. $$
It is easy to see that the left hand side contains the right hand side:
indeed, by multiplying the trinomials in (\ref{eq:sec5cubics}) by $\varepsilon^{k-n}$
and then evaluating at $\varepsilon = 0$, we obtain the binomial 
cubics among the generators of $L_n$.

Finally, what is claimed in Theorem \ref{thm:sec5main} (c) is the following identity
$$ {\rm in}_\prec(L_n ) \,\, = \,\, N_n \qquad {\rm in} \,\, K[x,y,z]. $$
Here, ${\rm in}_\prec(L_n)$ is the lexicographic initial ideal
of $L_n$, in the usual Gr\"obner basis sense.
Again, the left hand side contains the right hand side 
because the initial monomials of the binomial generators of $L_n$ generate $N_n$.

Note that $N_n$ is distinct from the generic initial ideal $M_n$.
Even though $M_n$ played a prominent role in  Sections 2 and 3,
the ideal $N_n$ will be more useful in Section 6.
The reason is that $M_n$ is the most singular point on 
the Hilbert scheme $\mathcal{H}_n$ while,
as we shall see, $N_n$ is a smooth point on $\mathcal{H}_n$.

In summary, what we have shown thus far is the following inclusion:
\begin{equation} \label{eq:thusfar}
 N_n \,\, \subseteq \,\,{\rm in}_\prec \bigl( {\rm in}(J_{A(\varepsilon)}) \bigr)  
 \end{equation}
We seek to show that equality holds.
Our proof rests on the following lemma.

\begin{lemma}
\label{lem:N_Hilbert_function}
The monomial ideal $N_n$ has the $\ZZ^n$-graded Hilbert function~\eqref{Vn_Hilbert_function}.
\end{lemma}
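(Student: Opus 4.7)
The strategy is to count standard monomials of $N_n$ in each multidegree $u \in \NN^n$ directly, and to verify that the count equals $\binom{U+3}{3}-\sum_i\binom{u_i+2}{3}$, where $U = u_1 + \cdots + u_n$ and I also write $S_r = \sum_i u_i^r$.

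I would first characterize the standard monomials.  For a monomial $x^ay^bz^c$ of multidegree $u$, let $X = \{i : a_i > 0\}$, $Y = \{j : b_j > 0\}$, $V = X \cup Y$, and $P = \min V$, $Q = \max V$ when $V \ne \emptyset$.  Inspecting the generators of $N_n$ yields: $x^ay^bz^c \notin N_n$ if and only if
(C1) $\max Y \le \min X$, and
(C2) $c_j = 0$ for every $P < j < Q$.
Indeed, (C1) is equivalent to the absence of a divisor $x_iy_j$; given (C1), for any $i < j < k$ with $c_j > 0$ and $i, k \in V$ (which is possible precisely when $P < j < Q$), one of the cubics $x_iz_jx_k, y_iz_jy_k, y_iz_jx_k$ divides the monomial, so (C2) captures exactly the obstruction to all three cubic families (the remaining pattern $i \in X, k \in Y$ is already excluded by (C1) via the quadric $x_iy_k$).

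Next I stratify the count by the pair $(P, Q)$, with the degenerate stratum $V = \emptyset$ contributing the single monomial $z^u$.  For $P \le Q$ in $[n]$ with $V = [P, Q] \cap \{i : u_i \ge 1\}$: positions outside $[P, Q]$ are forced to $z_i^{u_i}$; positions $i \in (P, Q)$ with $u_i \ge 1$ have $c_i = 0$ and $a_i + b_i = u_i$; and at the two endpoints $c_i \in \{0, \dots, u_i - 1\}$ with $a_i + b_i = u_i - c_i \ge 1$.  Condition (C1) then forces $V$ to split as a $Y$-prefix followed by an $X$-suffix with at most one ``transition'' position of type $XY$, giving $2|V| + 1$ profiles in all.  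Multiplying the per-position micromonomial counts over each profile and summing yields, for $P < Q$,
\begin{equation*}
\mathrm{Count}(P, Q) \,=\, u_P u_Q \Bigl[3 + \sum_{P < i < Q} u_i \Bigr] + \binom{u_P}{2} u_Q + u_P \binom{u_Q}{2},
\end{equation*}
while $\mathrm{Count}(P, P) = \binom{u_P + 2}{2} - 1$; contributions with $u_P u_Q = 0$ vanish automatically.

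The final step is the main obstacle: algebraic bookkeeping.  Using $\sum_{i<j} u_i u_j = \tfrac{1}{2}(U^2 - S_2)$ together with $\sum_{i<j<k} u_i u_j u_k = \tfrac{1}{6}(U^3 - 3US_2 + 2S_3)$, the total
$\,1 + \sum_P \mathrm{Count}(P, P) + \sum_{P < Q} \mathrm{Count}(P, Q)\,$
collapses to
\begin{equation*}
\tfrac{1}{6}\bigl[(U+1)(U+2)(U+3) - 2U - 3S_2 - S_3\bigr] \,=\, \binom{U+3}{3} - \sum_{i=1}^n \binom{u_i + 2}{3},
\end{equation*}
which is precisely (\ref{Vn_Hilbert_function}).
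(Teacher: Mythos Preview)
Your argument is correct.  The characterization (C1)--(C2) is equivalent to the paper's description of standard monomials as words $z^{a}y^{b}x^{c}z^{d}$ with the four blocks supported on consecutive intervals $[1,i],[i,j],[j,k],[k,n]$; your $P$ and $Q$ play the roles of the outer breakpoints $i$ and $k$.  Where the two proofs diverge is in the enumeration.  You stratify by the pair $(P,Q)$, classify the $2|V|+1$ admissible $Y/XY/X$ profiles, multiply out the per-slot counts, and then collapse the double sum using the Newton identities for $T_2=\sum_{i<j}u_iu_j$ and $T_3=\sum_{i<j<k}u_iu_ju_k$.  The paper instead sets up a single stars-and-bars encoding: place three bars among $U+3$ slots, fill the four resulting compartments greedily with $z$'s, $y$'s, $x$'s, $z$'s indexed in order, and observe that the only overcounting occurs when all three bars land inside one block of $u_i$ consecutive slots, which happens $\binom{u_i+2}{3}$ times.

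The paper's approach is shorter and explains the shape of the formula bijectively: the main term $\binom{U+3}{3}$ and the correction $\sum_i\binom{u_i+2}{3}$ both acquire combinatorial meaning, and no symmetric-function algebra is needed.  Your approach, on the other hand, is entirely mechanical once the stratification is set up and would adapt readily to variants where a clean bijection is not apparent.  Either way the Hilbert function comes out the same.
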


\begin{proof} 
Let $u=(u_1,\ldots,u_n)\in\NN^n$, and let $\mathfrak{B}_u$ be the set of all monomials
of multidegree $u$ in $K[x,y,z]$ which are not in $N_n$.  We need to show that
$$|\mathfrak{B}_u| \,=\, {u_1+\cdots+u_n+3\choose 3} - \sum_{i=1}^n{u_i+2\choose 3}.$$

It can be seen from the generators of $N_n$ that the monomials in $\mathfrak{B}_u$ are
of the form $z^a y^bx^c z^d$ for $a,b,c,d\in\NN^n$ such that $u=a+b+c+d$ and 
\begin{align*}
a &= (a_1,\ldots, a_i, 0,\ldots,0)\\
b &= (0,\ldots,0,b_i,\ldots,b_j,0,\ldots,0)\\
c &= (0,\ldots,0,c_j,\ldots, c_k, 0,\ldots,0)\\
d &= (0,\ldots, 0,d_k, \ldots, d_n)
\end{align*}
for some triple $i,j,k$ with $1\le i\le j\le k\le n$.

We count the monomials in $\mathfrak{B}_u$ using a combinatorial ``stars and bars'' argument.
Each monomial can be formed in the following way.
Suppose there are $u_1+\cdots+u_n+3$ blank spaces laid left to right.
Fill exactly three spaces with bars. This leaves $u_1+\cdots+u_n$
open blanks to fill in, which is the total degree of a monomial in $\mathfrak{B}_u$.
The three bars separate the blanks into four
compartments, some possibly empty. From these compartments we greedily form $a$, $b$, $c$, and $d$ to make
$z^ay^bx^cz^d$ as described below.

In what follows, $\star$ is
used as a placeholder symbol.  Fill the first $u_1$ blanks with the symbol $\star_1$, the next
$u_2$ blanks with $\star_2$, and continue to fill up until the last $u_n$ blanks are filled with $\star_n$. Now we pass
once more through these symbols and replace each $\star_i$ with either $x_i$, $y_i$, or $z_i$ such that 
all variables in the first compartment are $z$'s, those in the second are $y$'s, then $x$'s and in the fourth compartment  $z$'s. Removing the bars gives  $z^ay^bx^cz^d$ in $\mathfrak{B}_u$.

There are $\displaystyle{u_1+\cdots+u_n+3\choose 3}$ ways of choosing the three bars.
The monomials in
$\mathfrak{B}_u$  are overcounted only when $i=j=k$ if $z_i$ appears in both the 
first and fourth compartments.
Indeed, in such cases if we require $a_i=0$, the monomial is uniquely represented, so we
are overcounting by the $\displaystyle{u_i+2\choose 3}$ choices when $a_i\ne 0$.
\end{proof}

We are now prepared to derive the main result of this section.  

\medskip

\noindent  {\em Proof of Theorem \ref{thm:sec5main}:}
Lemma~\ref{lem:N_Hilbert_function} and Theorem~\ref{thm:multiview ideals on Hilbert scheme} tell us that
$N_n$ and $J_{A(\varepsilon)}$ have the same $\ZZ^n$-graded Hilbert function \eqref{Vn_Hilbert_function}.
We also know from \cite[\S 2.4]{TropicalBook} that $\tin(J_{A(\varepsilon)})$ has the same Hilbert function, just as 
passing to an initial monomial ideal for a term order preserves Hilbert function.
Hence the equality
$N_n \,\, \subseteq \,\,{\rm in}_\prec \bigl( {\rm in}(J_{A(\varepsilon)}) \bigr)  $ holds in (\ref{eq:thusfar}).
This proves parts (b) and (c).
We have shown that $\mathcal{G}_n$ is a Gr\"obner basis
for the homogeneous ideal $J_{A(\varepsilon)}$ in the
valuative sense of  \cite[\S 2.4]{TropicalBook}. This implies
that $\mathcal{G}_n$ generates  $J_{A(\varepsilon)}$.
\qed
\medskip

\begin{remark} \label{rmk:decomp}
The polyhedral subcomplexes of $(\Delta_2)^n$ defined by
the binomial ideal $L_n$ and the monomial ideal $N_n$ are
combinatorially interesting. For instance,
$L_n$ has prime decomposition $I_3\cap I_4\cap\cdots\cap I_n\cap I_{n+1}$, where
\begin{align*}
I_t \,\,&:= \,\langle\, x_i,y_i :\, i=t,t+1,\ldots,n \,\rangle \ +\\
&\ \ \ \ \ \langle\, x_iy_j - x_jy_i :  1\le i < j < t \,\rangle\  + \\
&\ \ \ \ \ \langle\, x_iz_j - x_jz_i, \,y_iz_j - y_jz_i : 1\le i < j < t-1 \,\rangle.
\end{align*}
The monomial ideal $N_n$ is the intersection of
 ${\rm in}_\prec(I_t)$ for $t=3,\ldots,n+1$. \qed
\end{remark}

\section{The Hilbert Scheme}

We define $\mathcal{H}_n$ to be the multigraded Hilbert scheme which parametrizes all $\ZZ^n$-homogeneous ideals in $K[x,y,z]$ with the Hilbert function in (\ref{Vn_Hilbert_function}).
According to the general construction given in \cite{HaimanSturmfels}, 
 $\mathcal{H}_n$ is a projective scheme.
The ideals $J_A$ and ${\rm in}_\prec(J_A)$ for $n$ distinct
camera positions, as well as the combinatorial ideals $M_n, L_n$ and $N_n$ all correspond to closed points  on $\mathcal{H}_n$.

Our Hilbert scheme $\mathcal{H}_n$ is closely related to the  Hilbert scheme $H_{4,n}$ 
which was studied in \cite{CS}. We already utilized results 
 from that paper in our proof of Theorem \ref{thm:UGB}.
 Note that $H_{4,n}$ parametrizes degenerations of the
 diagonal $\PP^3$ in $(\PP^3)^n$ while
 $\mathcal{H}_n$ parametrizes blown-up images of that $\PP^3$
 in $(\PP^2)^n$.
 
 Let $G=\PGL(3,K)$ and $\B\subset G$ the Borel subgroup of lower-triangular $3\times 3$ matrices modulo scaling. 
 The group $G^n$ acts on $K[x,y,z]$ and this induces an action on the Hilbert scheme $\mathcal{H}_n$.
 Our results concerning the ideal $M_n$ in Section 3 imply the following corollary,
 which summarizes the statements analogous to Theorem 2.1 and Corollaries 2.4 and 2.6 in \cite{CS}.

\begin{corollary}
The multigraded Hilbert scheme $\mathcal{H}_n$ is connected.
The point representing the generic initial ideal $M_n$ lies on each irreducible component of $\mathcal{H}_n$.
All ideals that lie on $\mathcal{H}_n$ are radical and Cohen-Macaulay.
\end{corollary}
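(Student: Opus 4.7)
The plan is to deduce all three assertions from the analysis of the distinguished Borel-fixed ideal $M_n$ carried out in Section~3, closely following the template of Theorem~2.1 and Corollaries~2.4,~2.6 in \cite{CS}. The group $G^n$ with $G = \PGL(3,K)$ acts on $K[x,y,z]$ preserving the $\ZZ^n$-grading, and hence the Hilbert function~(\ref{Vn_Hilbert_function}); this induces an action of $G^n$, and in particular of the subgroup $\B^n$, on $\mathcal{H}_n$. Each irreducible component of $\mathcal{H}_n$ is $G^n$-stable, and therefore $\B^n$-stable.

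For connectedness and the fact that $M_n$ lies on every irreducible component, I would invoke the Borel fixed point theorem: any $\B^n$-action on a projective scheme has a fixed point on each irreducible $\B^n$-stable subscheme. Thus every irreducible component of $\mathcal{H}_n$ contains a Borel-fixed $\ZZ^n$-homogeneous ideal with Hilbert function~(\ref{Vn_Hilbert_function}). By the uniqueness statement established at the end of Section~3, this fixed ideal must be $M_n$. Connectedness then follows because all components share the point $[M_n]$.

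For Cohen-Macaulayness and radicality, I would use a Gr\"obner degeneration to $M_n$. Given an arbitrary ideal $I$ on $\mathcal{H}_n$, applying a generic $g \in G^n$ and then taking the lex initial ideal produces $\tin_\prec(g\cdot I)$, which is Borel-fixed, $\ZZ^n$-homogeneous, and has the same Hilbert function as $I$; by uniqueness it coincides with $M_n$. Cohen-Macaulayness ascends through this Gr\"obner degeneration: the Hilbert function, and hence the Krull dimension, is preserved, while in general $\mathrm{depth}(S/I) \geq \mathrm{depth}(S/\tin_\prec(g\cdot I))$; since $S/M_n$ is Cohen-Macaulay by shellability of its Stanley-Reisner complex (Section~3), equality of depth and dimension for $S/M_n$ forces the same for $S/I$.

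The step I expect to be the main obstacle is radicality, which does not ascend from the special fiber as transparently as Cohen-Macaulayness. The standard workaround, which I would import here, combines two ingredients: a squarefree monomial initial ideal forces every minimal prime of $I$ to have multiplicity one, and the Cohen-Macaulayness just established rules out embedded primes. Together these imply $K[x,y,z]/I$ is reduced. Since the only inputs to this argument are that $M_n$ is a squarefree Cohen-Macaulay monomial ideal with the prescribed Hilbert function, all of which have been verified in Section~3, the argument of \cite[\S 2]{CS} transfers to $\mathcal{H}_n$ without modification.
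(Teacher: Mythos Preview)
Your proposal is correct and follows exactly the approach the paper intends: the paper offers no separate proof but simply states that the corollary follows from the Section~3 results by the template of Theorem~2.1 and Corollaries~2.4,~2.6 in \cite{CS}, which is precisely the Borel fixed point plus Gr\"obner-degeneration-to-$M_n$ argument you have spelled out. The ingredients you invoke---uniqueness of the Borel-fixed ideal, shellability and hence Cohen--Macaulayness of $M_n$, and the squarefree-initial-plus-CM route to radicality---are exactly those supplied in Section~3 and used in \cite{CS}.
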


In particular, every monomial ideal in $\mathcal{H}_n$ is squarefree
and can hence be identified with its variety in $(\PP^2)^n$,
or, equivalently, with a subcomplex in  the product of triangles $(\Delta_2)^n$.
One of the first questions one asks about any multigraded Hilbert scheme,
including $\mathcal{H}_n$, is to list its monomial ideals.

This task is easy for the first case, $n=2$. The Hilbert scheme $\mathcal{H}_2$ 
parametrizes $\ZZ^2$-homogeneous ideals in $K[x,y,z]$ having Hilbert function 
$$ h_2:\NN^2\to\NN, \, (u_1,u_2)\mapsto {u_1+u_2+3\choose 3} - {u_1+2\choose 3} - {u_2+2\choose 3}. $$
There are exactly nine monomial ideals on $\mathcal{H}_2$, namely
$$ \langle x_1x_2 \rangle ,\ \langle x_1y_2 \rangle ,\ \langle x_1z_2 \rangle ,\ \langle y_1x_2 \rangle ,\ 
\langle y_1y_2 \rangle ,\ \langle y_1z_2 \rangle ,\ \langle z_1x_2 \rangle ,\ \langle z_1y_2 \rangle ,\ \langle z_1z_2 \rangle  .$$
In fact, the ideals on $\mathcal{H}_2$ are precisely the
principal ideals generated by bilinear forms, and
$\mathcal{H}_2$ is isomorphic to an $8$-dimensional projective space
$$ \mathcal{H}_2 \,=\, \{ \langle c_0x_1x_2+c_1x_1y_2+\cdots+c_8z_1z_2 \rangle \
 \,:                                                                            
\, (c_0:c_1:\cdots:c_8)\in\PP^8\}.$$

The principal ideals $J_A$ which actually arise from two cameras
form a cubic hypersurface in this $\mathcal{H}_2 \simeq \PP^8$.  To see this, we
write $A^j_i$ for the $j$-th row of the $i$-th camera matrix
and $[A_{i_1}^{j_1} A_{i_2}^{j_2} A_{i_3}^{j_3}  A_{i_4}^{j_4}]$
for the $4 \times 4$-determinant formed by four such row vectors.
The bilinear form can be written as
$$ \xx_2^T F \xx_1 \,= \,
\bmat{x_2 & y_2 & z_2}
\bmat{c_0 & c_3 & c_6\\ c_1 & c_4 & c_7 \\ c_2 & c_5 & c_8}
\bmat{x_1\\y_1\\z_1},
$$
where $F$ is the \emph{fundamental matrix} \cite{HartleyZisserman}. In terms of the camera matrices, 
\begin{equation}
\label{eq:fundmatrix}
 F \, = \,
\bmat{
 \phantom{-}[A_1^2 A_1^3 A_2^2 A_2^3 ] & - [A_1^1 A_1^3 A_2^2 A_2^3 ] &  \phantom{-}[A_1^1 A_1^2 A_2^2 A_2^3 ] \\
 -[A_1^2 A_1^3 A_2^1 A_2^3 ] &  \phantom{-}[A_1^1 A_1^3 A_2^1 A_2^3 ] & -[A_1^1 A_1^2 A_2^1 A_2^3 ] \\
  \phantom{-}[A_1^2 A_1^3 A_2^1 A_2^2 ] & -[A_1^1 A_1^3 A_2^1 A_2^2 ] &  \phantom{-}[A_1^1 A_1^2 A_2^1 A_2^2 ] }.
 \end{equation}
 This matrix has rank $\leq 2$, and every $3 \times 3$-matrix of rank $\leq 2$ can be
 written in this form for suitable camera matrices $A_1$ and $A_2$ of size $3 \times 4$.

The formula in (\ref{eq:fundmatrix}) defines a map $(A_1,A_2) \mapsto F$
from pairs of camera matrices with distinct focal points into the Hilbert scheme $\mathcal{H}_2$.
The closure of its image is a compactification of the space of camera positions.
We now precisely define the corresponding map for arbitrary $n \geq 2$.
The construction is inspired by the construction due to Thaddeus discussed in
\cite[Example 7]{CS}.

\smallskip

Let ${\rm Gr}(4,3n)$ denote the Grassmannian of
$4$-dimensional linear subspaces of $K^{3n}$.
The $n$-dimensional algebraic torus $(K^*)^n$ acts on
this Grassmannian by scaling the coordinates on $K^{3n}$,
where the $i$th factor $K^*$ scales the coordinates
indexed by $3i-2, 3i-1$ and $3i$. Thus, if we represent
each point in ${\rm Gr}(4,3n)$ as the row space of a
$(4 \times 3n)$-matrix $ \bmat{ A_1^T \! & \! A_2^T \! & \!  \cdots \! & \! A_n^T }$, then
$\lambda = (\lambda_1,\ldots,\lambda_n) \in (K^*)^n$ sends this matrix to
 $ \bmat{ \lambda_1 A_1^T \! & \! \lambda_2 A_2^T \! & \!  \cdots \! & \! \lambda_n A_n^T }$.
The multiview ideal $J_A$ is invariant under this action by $(K^*)^n$. In symbols,
$J_{\lambda \circ A} = J_A$. In the next lemma,
 GIT stands for {\em geometric invariant theory}.

\begin{lemma} \label{lem:cameramap}
The assignment $\, A \mapsto J_A \,$ defines
 an injective rational map $\gamma$ from a
GIT quotient $ {\rm Gr}(4,3n) /\!/  (K^*)^n$ to the
multigraded Hilbert scheme~$\mathcal{H}_n$.
\end{lemma}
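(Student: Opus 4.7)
My plan is to build $\gamma$ in two steps: first verify that $A \mapsto J_A$ descends to a rational map on the GIT quotient by using Theorem~\ref{thm:multiview ideals on Hilbert scheme}, then prove injectivity by extracting the pairwise fundamental matrices from $J_A$ and invoking the classical reconstruction theorem of multiview geometry.

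For well-definedness, let $U \subset \mathrm{Gr}(4,3n)$ denote the Zariski open locus of $4$-planes which, when presented as the row span of $[A_1^T \mid \cdots \mid A_n^T]$, have each $A_i$ of rank $3$ and pairwise distinct focal points $f_i = \ker A_i \in \PP^3$. Both conditions are $\GL(4)$-invariant (so they descend from matrix representatives to $\mathrm{Gr}(4,3n)$) and $(K^*)^n$-invariant. For $A \in U$, Theorem~\ref{thm:multiview ideals on Hilbert scheme} provides the Hilbert function (\ref{Vn_Hilbert_function}) for $J_A$, so $J_A$ represents a closed point of $\mathcal{H}_n$. Rescaling $A_i$ by $\lambda_i \in K^*$ leaves the projective map $\phi_{A_i} \colon \PP^3 \dashrightarrow \PP^2$ unchanged, hence $J_{\lambda \circ A} = J_A$. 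The induced morphism on $U/\!/(K^*)^n$ extends by Zariski closure to the rational map $\gamma \colon \mathrm{Gr}(4,3n)/\!/(K^*)^n \dashrightarrow \mathcal{H}_n$.

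For injectivity, suppose $A, A' \in U$ satisfy $J_A = J_{A'}$. Evaluating (\ref{Vn_Hilbert_function}) at $u = e_i + e_j$ gives $\dim (K[x,y,z]/J_A)_{e_i+e_j} = 8$, so $(J_A)_{e_i+e_j}$ is $1$-dimensional and spanned by $\det A_{\{i,j\}}$ from Theorem~\ref{thm:UGB}. This polynomial is, up to scalar, the fundamental matrix $F_{ij}$ of (\ref{eq:fundmatrix}), so every $F_{ij}$ is intrinsically determined by $J_A$. I would then apply the classical reconstruction theorem: for $n = 2$, the fundamental matrix $F_{12}$ determines the camera pair $(A_1, A_2)$ uniquely modulo $\GL(4) \times (K^*)^2$, with matching dimensions $2 \cdot 12 - 15 - 2 = 7 = \dim\{F \in \PP^8 : \mathrm{rank}(F) \leq 2\}$. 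For $n \geq 3$, I would normalize via $\GL(4)$ so that $A_1 = A_1'$, and then deduce $A_i' = \lambda_i A_i$ from the equalities $F_{1i} = F'_{1i}$ for $i = 2, \ldots, n$.

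The main obstacle is the classical reconstruction step for $n = 2$, which requires careful tracking of gauge freedoms. One concrete approach: normalize $A_1 = [I_3 \mid \mathbf{0}]$ via $\GL(4)$, write $A_2 = [R \mid t]$, read off from (\ref{eq:fundmatrix}) the factorization of $F_{12}$ as a skew-times-rank-$3$ product in $(R, t)$, and recover $(R, t)$ from this factorization up to the residual $\GL(4)$-stabilizer of $A_1$ and the scalar $\lambda_2 \in K^*$---precisely the group action quotiented out in the target of $\gamma$.
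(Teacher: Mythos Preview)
The paper's own proof is a single sentence: it asserts that one must check $J_A \neq J_{A'}$ for generic $A, A'$ lying in distinct $(K^*)^n$-orbits, and then stops. Your proposal therefore supplies substantially more than the paper does. The well-definedness paragraph is correct, and your device of reading the fundamental matrix $F_{ij}$ off the one-dimensional graded piece $(J_A)_{e_i+e_j}$ (via Lemma~\ref{lem:easyinclusion} and the value $8$ of the Hilbert function at $e_i+e_j$) is the natural first step toward injectivity.

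There is, however, a genuine gap in your $n \geq 3$ step. After normalizing $A_1 = A_1'$, the equality $F_{1i} = F'_{1i}$ only yields $A_i' = \lambda_i A_i Q_i$ for some $Q_i$ in the stabilizer of $A_1$ in $\PGL(4,K)$, and that stabilizer is $4$-dimensional (it consists of all $Q = cI + f_1 w^T$ with $w \in K^4$). Nothing forces the $Q_i$ to agree for different $i$, so you cannot conclude $A_i' = \lambda_i A_i$. A dimension count exposes the problem: the configuration space has dimension $11n-15$, whereas the tuple $(F_{12},\ldots,F_{1n})$ varies in only $7(n-1)$ dimensions, and $7(n-1) < 11n-15$ already for $n=3$. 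So the fundamental matrices with one index fixed cannot separate configurations.

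The repair is to use more of $J_A$. One option: after fixing $A_1 = A_1'$, use $F_{12}$ and the residual $4$-dimensional stabilizer to also arrange $A_2 = A_2'$; a short computation shows the joint stabilizer of $(A_1,A_2)$ in $\PGL(4,K)$ is trivial when $f_1 \neq f_2$. Then for each $i \geq 3$ the two equalities $F_{1i} = F'_{1i}$ and $F_{2i} = F'_{2i}$ together force $A_i' \in K^* \cdot A_i$, provided $f_i \notin \langle f_1, f_2 \rangle$ (a generic condition, which suffices for a rational map). Alternatively, bypass the pairwise matrices and extract the trifocal tensors from the graded pieces $(J_A)_{e_i+e_j+e_k}$, which are classically known to determine three cameras up to projectivity.
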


\begin{proof}
For the proof it suffices to check that $J_A \not= J_{A'}$ whenever
$A$ and $A'$ are generic camera configurations
that are not in the same $(K^*)^n$-orbit.
\end{proof}

We call $\gamma$ the camera map. Since we need
$\gamma$ only as a rational map, the choice of
linearization does not matter when we form the GIT quotient.
The closure of its image in $\mathcal{H}_n$ is well-defined
and independent of that choice of linearization.
We define the {\em compactified camera space}, for $n$ cameras, to be
$$ \Gamma_n \,\,:=\,\, \overline{\gamma({\rm Gr}(4,3n) /\!/ (K^*)^n)} \,\,\,\subseteq \,\, \mathcal{H}_n. $$
The projective
variety $\Gamma_n$ is a natural compactification of
the parameter space studied by Heyden in \cite{Heyden}. 
Since the torus $(K^*)^n$ acts on ${\rm Gr}(4,3n)$ with a one-dimensional
stabilizer, Lemma \ref{lem:cameramap} implies that the
compactified space of $n$ cameras has the dimension we expect from \cite{Heyden}, namely,
$$  {\rm dim}(\Gamma_n) \,\, = \,\,
{\rm dim}({\rm Gr}(4,3n)) - (n - 1) \,\, = \,\,  4(3n-4) - (n - 1) \,\, = \,\, 11 n - 15. $$

We regard the following theorem as the main result in this paper.

\begin{theorem}
\label{thm:component}
For $n \geq 3$, the compactified  camera space  $\Gamma_n$ appears as
a distinguished irreducible component in the  multigraded Hilbert scheme $\mathcal{H}_n$.
\end{theorem}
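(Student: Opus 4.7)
The strategy is to use a standard tangent-space argument. First note that $\Gamma_n$ is irreducible of dimension $11n-15$: irreducibility is inherited from $\mathrm{Gr}(4,3n)$ through the GIT quotient and the rational map $\gamma$, and the dimension was computed right before the theorem. To exhibit $\Gamma_n$ as an irreducible component of $\mathcal{H}_n$, it therefore suffices to produce a single point $[I]\in\Gamma_n$ at which $\mathcal{H}_n$ is smooth of dimension exactly $11n-15$.

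The plan is to take $[I]=[N_n]$, the monomial ideal of Section 5. By Theorem~\ref{thm:sec5main}, $N_n$ is reached from the multiview ideal $J_{A(\varepsilon)}$ through the two-step flat degeneration $J_{A(\varepsilon)}\rightsquigarrow L_n\rightsquigarrow N_n$, and each step preserves the Hilbert function~\eqref{Vn_Hilbert_function}. Thus the family $\{[J_{A(\varepsilon)}]\}$ of points of $\gamma(\mathrm{Gr}(4,3n)/\!/(K^*)^n)$ extends to a curve in $\mathcal{H}_n$ whose limit point is $[N_n]$. This yields the containment $[N_n]\in\Gamma_n$. Similarly the intermediate ideal $[L_n]$ lies on $\Gamma_n$, and both will be seen to be smooth points on the distinguished component.

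Next one computes the Zariski tangent space
\[
 T_{[N_n]}\mathcal{H}_n \;=\; \mathrm{Hom}_R(N_n,R/N_n)_{\mathbf 0},
\]
the degree-zero part of the Hom-module with respect to the $\ZZ^n$-grading, in the sense of \cite{HaimanSturmfels}. Because $N_n$ is a squarefree monomial ideal, this Hom-space has an explicit combinatorial description: for each minimal generator $m$ of $N_n$, list all standard monomials of $R/N_n$ of the same $\ZZ^n$-degree as $m$ (these are the allowed local perturbations), then quotient by the linear relations imposed by the minimal first syzygies among the $m$'s. One expects the contribution of each bilinear generator $x_iy_j$ (which has multidegree $e_i+e_j$) and each trilinear generator to be computed by enumerating the relevant standard monomials, with syzygies accounting for the overlap between generators sharing an index.

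The main obstacle is carrying out this tangent-space bookkeeping so that the total dimension collapses cleanly to $11n-15$; for large $n$ both the number of generators ($\binom{n}{2}+3\binom{n}{3}$) and the number of Koszul-type syzygies grow, and one needs a tidy way to cancel them. A natural approach is to partition the perturbations by the "type" of move ($x_iy_j\mapsto$ other bilinear in $e_i+e_j$, cubic generator $\mapsto$ other standard cubic) and then show that many of the local moves are forced by syzygies coming from the generators of $N_n$ sharing a variable. An alternative is to construct an explicit basis for $\mathrm{Hom}_R(N_n,R/N_n)_{\mathbf 0}$ indexed by an $11n-15$-dimensional combinatorial object (e.g.\ tangent directions compatible with the polyhedral description of $V(N_n)$ in Remark~\ref{rmk:decomp}).

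Once the tangent-space dimension at $[N_n]$ is shown to equal $11n-15$, the conclusion is immediate: $\mathcal{H}_n$ is smooth of dimension $\le 11n-15$ at $[N_n]$, and $\Gamma_n$ is a closed irreducible subscheme of $\mathcal{H}_n$ passing through $[N_n]$ of dimension exactly $11n-15$. Therefore $\mathcal{H}_n$ is smooth of dimension $11n-15$ at $[N_n]$, $\Gamma_n$ is locally equal to $\mathcal{H}_n$ there, and $\Gamma_n$ is an irreducible component of $\mathcal{H}_n$.
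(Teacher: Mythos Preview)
Your overall strategy is exactly the one the paper uses: show $[N_n]\in\Gamma_n$ via the two-step degeneration of Section~5 (this is the paper's Lemma~\ref{lem:on_component}), and then compute $\dim_K \mathrm{Hom}_R(N_n,R/N_n)_{\mathbf 0}=11n-15$ (the paper's Lemma~\ref{lem:tangent_space}). So there is no disagreement on approach.

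The gap is that you do not actually carry out the tangent-space computation; you flag it as ``the main obstacle'' and sketch two possible routes without executing either. This is the entire content of the theorem---everything else is formal---so as written the proposal is an outline, not a proof. The paper fills this gap by hand: it writes down an explicit list of $11n-15$ degree-$\mathbf 0$ homomorphisms $N_n\to R/N_n$, organized into three classes (maps acting only on the quadratic generators $x_iy_j$, maps acting only on the cubic generators, and maps mixing both), checks they are well-defined and linearly independent, and then proves they span by taking an arbitrary $\varphi$ and using the module relations $\varphi(m\cdot g)=m\cdot\varphi(g)$ for carefully chosen monomials $m$ to force all remaining coefficients to zero. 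Your second suggested approach (``construct an explicit basis indexed by an $11n-15$-dimensional combinatorial object'') is exactly what is done, but the combinatorics does not collapse to a one-line argument; it requires a sequence of case analyses on the indices $i,j,k$. If you want a complete proof, you must supply this computation.
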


Note that the same statement if false for $n=2$:
$\Gamma_2$ is not a component of $\mathcal{H}_3 \simeq \PP^8$.
It is the hypersurface consisting of the fundamental matrices~(\ref{eq:fundmatrix}).

\begin{proof} By definition,  the compactified camera space $\Gamma_n$ is a closed subscheme of $\mathcal{H}_n$.
The discussion above shows that the dimension of any
irreducible component of $\mathcal{H}_n$  that contains
 $\Gamma_n$ is no smaller than $11n-15$.
We shall now prove the same $11n-15$ as an
upper bound for the dimension. This is done by exhibiting
a point in $\Gamma_n$ whose tangent space in 
the Hilbert scheme $\mathcal{H}_n$
has dimension $11n-15$. This will imply the assertion.

For any ideal $I\in\mathcal{H}_n$, the tangent space to
the Hilbert scheme $\mathcal{H}_n$ at $I$ is the space
of $K[x,y,z]$-module homomorphisms $I\to K[x,y,z]/I$ of degree~{\bf 0}.
In symbols, this space is $\,{\rm Hom}(I,K[x,y,z]/I)_{\bf 0} $.
The $K$-dimension of the tangent space provides an upper bound
for the dimension of any component on which $I$ lies.
It remains to specifically identify a point on $\Gamma_n$ that is smooth on 
$\mathcal{H}_n$, an ideal which has tangent space dimension exactly $11n-15$.

It turns out that the monomial ideal $N_n$ described in 
the previous section has this desired property.
Lemmas \ref{lem:on_component} and \ref{lem:tangent_space}
below give the details.
\end{proof}

\begin{lemma}
\label{lem:on_component}
The ideals $L_n$ and $N_n$ from the previous section lie in  $\Gamma_n$.
\end{lemma}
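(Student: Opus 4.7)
The plan is to realize both $L_n$ and $N_n$ as limits of one-parameter families whose generic fibers are honest multiview ideals. Since $\Gamma_n$ is by definition the closure of $\gamma(\mathrm{Gr}(4,3n)/\!/(K^*)^n)$ inside $\mathcal{H}_n$, and hence closed, any such limit point automatically lies in $\Gamma_n$.

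For $L_n$, I would apply this idea directly to the collinear family $A(\varepsilon)$ constructed in Section~5. For any $\varepsilon_0 \in K^*$ the focal points $f_i = (-1:1:1:\varepsilon_0^{n-i})$ are pairwise distinct, so $J_{A(\varepsilon_0)}$ is a genuine multiview ideal and lies in $\Gamma_n$. Theorem~\ref{thm:multiview ideals on Hilbert scheme} gives the Hilbert function~(\ref{Vn_Hilbert_function}) for every $\varepsilon_0 \ne 0$, and Theorem~\ref{thm:sec5main}(b), together with Lemma~\ref{lem:N_Hilbert_function} and the Hilbert-function invariance under Gr\"obner degeneration, gives the same Hilbert function for $L_n$. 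Hence the $\ZZ^n$-graded Hilbert function is constant in the family, so the ideal generated by $\mathcal{G}_n$ in $K[\varepsilon][x,y,z]$ is $K[\![\varepsilon]\!]$-flat and defines a morphism $\mathrm{Spec}\,K[\![\varepsilon]\!] \to \mathcal{H}_n$. Its generic point maps into $\Gamma_n$ and its closed point maps to $L_n$ by Theorem~\ref{thm:sec5main}(b); closedness of $\Gamma_n$ then forces $L_n \in \Gamma_n$.

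For $N_n$, I would set up a torus degeneration of $L_n$ that stays inside $\Gamma_n$. Choose an integer weight vector $\omega \in \ZZ^{3n}$ refining the lex order of Theorem~\ref{thm:sec5main}(c), so that $\tin_\omega(L_n) = \tin_\prec(L_n) = N_n$. The assignment $\lambda \mapsto \bigl(\mathrm{diag}(\lambda^{\omega_{x_i}},\lambda^{\omega_{y_i}},\lambda^{\omega_{z_i}})\bigr)_{i=1}^n$ is a one-parameter subgroup $\phi: K^* \to G^n$ inside the product of diagonal tori. The induced action of $G^n$ on $\mathcal{H}_n$ preserves the locus of multiview ideals because $J_{g \circ A} = g \cdot J_A$ for every $g \in G^n$, and hence preserves its closure $\Gamma_n$. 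Since $L_n \in \Gamma_n$ by the previous step, the orbit $\phi(K^*)\cdot L_n$ lies entirely in $\Gamma_n$. Under the standard dictionary between one-parameter subgroups of tori and weight initial ideals, its limit as $\lambda \to 0$ is $\tin_\omega(L_n) = N_n$; closedness of $\Gamma_n$ gives $N_n \in \Gamma_n$.

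The only delicate point is the $G^n$-invariance of $\Gamma_n$: this reduces to the equivariance $J_{g \circ A} = g \cdot J_A$ for the natural left $G^n$-action on $K[x,y,z]$, after which closure preserves invariance. The remaining ingredients---flatness via a constant multigraded Hilbert function, and identifying the limit of a one-parameter subgroup of a torus with a weight initial ideal---are standard, so I do not expect them to cause trouble.
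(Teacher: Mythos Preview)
Your proposal is correct and follows essentially the same approach as the paper's proof: degenerate the collinear family $A(\varepsilon)$ to obtain $L_n$ as a limit point in the closed set $\Gamma_n$, and then pass from $L_n$ to its initial ideal $N_n$ by a further one-parameter degeneration. The paper compresses both steps into a couple of sentences, whereas you spell out the flatness via constant Hilbert function and, for the second step, make explicit the $G^n$-invariance of $\Gamma_n$ and the identification of the torus limit with $\tin_\omega(L_n)$; these are exactly the points the paper is tacitly invoking when it writes ``since $N_n=\tin_\prec(L_n)$ \ldots\ it also lies on $\Gamma_n$.'' One small imprecision (shared with the paper): the focal points $(-1{:}1{:}1{:}\varepsilon_0^{\,n-i})$ fail to be pairwise distinct at the finitely many $\varepsilon_0$ satisfying $\varepsilon_0^{\,j-i}=1$ for some $i<j$, but this does not affect the limit argument.
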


\begin{proof}
The image of $\gamma$ in $\mathcal{H}_n$ consists of
all multiview ideals $J_A$, where $A$ runs over configurations of $n$ 
distinct cameras, by Theorem \ref{thm:multiview ideals on Hilbert scheme}.
Let $A(\varepsilon)$ denote the collinear configuration in Section 5,
and consider any specialization of $\varepsilon$
to a non-zero scalar in $K$. The resulting ideal
$J_{A(\varepsilon)}$ is a  $K$-valued point of $\Gamma_n$, for any 
$\varepsilon \in K \backslash \{0\}$. The special fiber
$J_{A(0)}  = L_n$ is in the Zariski closure of these points, 
because, locally, any regular function vanishing on
the coordinates of $J_{A(\varepsilon)}$ for 
all $\varepsilon \not= 0$ will vanish for $\varepsilon = 0$.
We conclude that $L_n$ is a $K$-valued point in the projective variety $\Gamma_n$.
Likewise, since $N_n = {\rm in}_\prec(L_n)$ is
an initial monomial ideal of $L_n$, it also lies on $\Gamma_n$.
\end{proof}

\begin{lemma}
\label{lem:tangent_space}
The tangent space of the multigraded Hilbert scheme $\mathcal{H}_n$ at the
point represented by the
monomial ideal $N_n$ has dimension $11n-15$.
\end{lemma}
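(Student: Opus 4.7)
The plan is to compute the tangent space
$$T_{N_n}\mathcal{H}_n \;=\; \mathrm{Hom}_R(N_n, R/N_n)_{\mathbf{0}}$$
directly from a minimal free presentation of $N_n$, where $R = K[x,y,z]$. By Lemma \ref{lem:on_component}, the point $N_n$ lies in $\Gamma_n$, and the discussion preceding Theorem \ref{thm:component} gives $\dim\Gamma_n = 11n-15$. Hence $\dim T_{N_n}\mathcal{H}_n \geq 11n-15$ automatically, and the content of Lemma \ref{lem:tangent_space} is the matching upper bound.

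First I would list the $\binom{n}{2}+3\binom{n}{3}$ minimal generators of $N_n$ from Section 5: the quadratics $x_iy_j$ for $i<j$ and the cubics $x_iz_jx_k$, $y_iz_jy_k$, $y_iz_jx_k$ for $i<j<k$. Lemma \ref{lem:N_Hilbert_function} yields $\dim(R/N_n)_{e_i+e_j}=8$ and $\dim(R/N_n)_{e_i+e_j+e_k}=17$, so the space of multidegree-preserving $R$-module maps $F_0 \to R/N_n$ in degree $\mathbf 0$, with $F_0$ the free module on the minimal generators, has $K$-dimension
$$8\binom{n}{2}+51\binom{n}{3}.$$
An element of $T_{N_n}\mathcal{H}_n$ is such a tuple $(\phi(m))_m$ satisfying the relations imposed by the first syzygies of $N_n$.

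The bulk of the work is to enumerate the minimal first syzygies of $N_n$ and compute the rank of the linear system they induce on the tuples $(\phi(m))_m$. I expect these syzygies to split into four natural families, classified by how the pair- and triple-index sets of the two generators involved overlap: quadratic-quadratic syzygies among $x_iy_j$'s that share a variable, quadratic-cubic syzygies between $x_iy_j$ and a cubic indexed by a triple containing $\{i,j\}$, internal cubic-cubic syzygies among the three cubics indexed by a single triple, and external cubic-cubic syzygies between cubics on two overlapping triples. Each minimal syzygy $\sum a_i e_i = 0$ contributes the linear constraint $\sum a_i\phi(m_i) \equiv 0 \pmod{N_n}$ in its multidegree.

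The main obstacle is to isolate the minimal (as opposed to Taylor) syzygies cleanly and to compute the rank of the induced linear system exactly. I would exploit the $S_n$-action on camera indices to decompose the syzygies into orbits, so that the whole computation reduces to an analysis on index sets of size at most four or five, whose outputs are then assembled with binomial multiplicities. Rank additivity along this decomposition should yield the target rank $8\binom{n}{2}+51\binom{n}{3}-(11n-15)$, forcing the kernel to have dimension exactly $11n-15$. As a sanity check I would verify the count with \texttt{Macaulay2} for $n \leq 5$ before promoting to the general statement; combined with the lower bound from Lemma \ref{lem:on_component}, this pins down $\dim T_{N_n}\mathcal{H}_n = 11n-15$.
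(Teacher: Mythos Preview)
Your strategy is sound in principle --- computing $\mathrm{Hom}(N_n,R/N_n)_{\mathbf 0}$ as the kernel of the syzygy-induced linear map on $\mathrm{Hom}(F_0,R/N_n)_{\mathbf 0}$ is the standard description of the tangent space, and your dimension counts $8\binom{n}{2}$ and $51\binom{n}{3}$ are correct --- but the proposal does not actually carry out the computation. You correctly identify the obstacle (``isolate the minimal syzygies cleanly and compute the rank of the induced linear system exactly'') and then defer it, proposing to check $n\le 5$ with \texttt{Macaulay2} ``before promoting to the general statement.'' That promotion is exactly where the proof lives: nothing in your outline explains why the rank should equal $8\binom{n}{2}+51\binom{n}{3}-(11n-15)$ for all $n$, and the $S_n$-decomposition you invoke is not developed into an argument. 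As written this is a plan, not a proof.

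The paper takes a more direct route that sidesteps the syzygy enumeration entirely. It writes down $11n-15$ explicit homomorphisms $N_n\to R/N_n$, organized into three classes according to whether they act nontrivially on the quadratic generators, the cubic generators, or both; checks that each is a well-defined module map and that together they are linearly independent; and then shows by a hands-on case analysis that any degree-$\mathbf 0$ homomorphism reduces to zero after subtracting a suitable $K$-linear combination of these. This yields both bounds simultaneously, without appealing to $\dim\Gamma_n$. Were you to push your syzygy approach through, the constraints you would extract are essentially the relations the paper exploits in its case analysis --- but starting from an explicit candidate basis is cleaner, since it avoids the bookkeeping of distinguishing minimal from Taylor syzygies and computing ranks in each multidegree.
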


\begin{proof}
The tangent space at $N_n$ equals 
$\,{\rm Hom}(N_n,K[x,y,z]/N_n)_{\bf 0} $.
We shall present a basis for this space that is broken into three
distinct classes: those homomorphisms that act nontrivially only on the quadratic generators,
those that act nontrivially only on the cubics, and those with a mix of both.

Each $K[x,y,z]$-module homomorphism $\varphi:N_n\to K[x,y,z]/N_n$ below is described by its action
on the minimal generators of $N_n$.  Any generator
not explicitly mentioned is mapped to 0 under $\varphi$.
One checks that each is in fact a well-defined $K[x,y,z]$-module homomorphism 
from $N_n$ to $K[x,y,z]/N_n$.

\smallskip

\underline{Class I:} \
For each $1\le i < n$, we define the following maps
\begin{itemize}
\item $\alpha_i: x_iy_k\mapsto y_iy_k$ for all $i<k\le n$,
\item $\beta_i: x_iy_{i+1}\mapsto x_{i+1}y_i$.
\end{itemize}
For each $1<k\le n$, we define the following map
\begin{itemize}
\item $\gamma_k : x_iy_k \mapsto x_ix_k$ for all $1\le i < k$.
\end{itemize}
We define two specific homomorphisms
\begin{itemize}
\item $\delta_1: x_1y_2 \mapsto y_1z_2$,
\item $\delta_2:x_{n-1}y_n\mapsto z_{n-1}x_n$.
\end{itemize}

\smallskip

\underline{Class II:} \
For each $1<j<n$, we define the following maps.  Each
homomorphism is defined on every pair $(i,k)$ such that
$1\le i < j < k\le n$.
\begin{itemize}
\item $\rho_j: x_iz_jx_k\mapsto x_ix_jx_k$ and $y_iz_jx_k \mapsto y_ix_jx_k$,
\item $\sigma_j: x_iz_jx_k \mapsto x_ix_jz_k$ and $y_iz_jx_k\mapsto y_ix_jz_k$,
\item $\tau_j: x_iz_jx_k \mapsto x_iz_jz_k$ and $y_iz_jx_k \mapsto y_iz_jz_k$,
\item $\nu_j: y_iz_jx_k \mapsto y_iy_jx_k$ and $y_iz_jy_k \mapsto y_iy_jy_k$,
\item $\mu_j: y_iz_jx_k \mapsto z_iy_jx_k$ and $y_iz_jy_k \mapsto z_iy_jy_k$,
\item $\pi_j: y_iz_jx_k \mapsto z_iz_jx_k$ and $y_iz_jy_k \mapsto z_iz_jy_k$.
\end{itemize}

\underline{Class III:} \
For each $1\le i < n$, we define the  map
\begin{itemize}
\item $\epsilon_i: x_iy_k \mapsto z_iy_k$ and $x_iz_jx_k \mapsto z_iz_jx_k$ \
 for $i<k\le n$ and $i<j<k$.
\end{itemize}
For each $1<k\le n$, we define the  map
\begin{itemize}
\item $\zeta_k: x_iy_k \mapsto x_iz_k$ and $y_iz_jy_k\mapsto y_iz_jz_k$ \
for $1\le i < k$ and  $i<j<k$.
\end{itemize}

\smallskip
All these maps are linearly independent over the field $K$. There are $n-1$
maps each of type $\alpha_i$, $\beta_i$, $\gamma_k$, $\epsilon_i$, and $\zeta_k$,
for a total of $5(n-1)$ different homomorphisms.  Each subclass of maps in class II
has $n-2$ members, adding $6(n-2)$ more homomorphisms.  Finally 
adding $\delta_1$ and $\delta_2$, we arrive at the total count of
$5(n-1)+6(n-2)+2 = 11n-15$ homomorphisms. 

 We claim that any $K[x,y,z]$-module
 homomorphism $N_n\to K[x,y,z]/N_n$ can be recognized as a $K$-linear combination
 of those from the three classes described above.
To prove this, suppose that $\varphi:N_n\to K[x,y,z]/N_n$ is a module homomorphism. 
For $1\le i < k \le n$, we can write $\varphi(x_iy_k)$ as a linear combination of monomials
of multidegree $e_i+e_k$ which are not in $N_n$.  By subtracting appropriate multiples of
$\alpha_i$, $\epsilon_i$, $\gamma_k,$ and $\zeta_k$, we can assume that
$$\varphi(x_iy_k) = a\,y_ix_k + b\,y_iz_k + c\,z_ix_k + d\,z_iz_k$$
for some scalars $a,b,c,d\in K$. We show that this can be written as a linear combination
of the maps described above by considering a few cases.

In the first case we assume $i+1<k$.
We use $K[x,y,z]$-linearity to infer 
$$\varphi(x_iy_{i+1}y_k) = a\,y_iy_{i+1}x_k + b\,y_iy_{i+1}z_k + c\,z_iy_{i+1}x_k + d\,z_iy_{i+1}z_k = y_k\, \varphi(x_iy_{i+1}).$$
Specifically, $y_k$ divides the middle polynomial.  But none of the four monomials are
zero in the quotient $K[x,y,z]/N_n$.  Hence, $0=a=b=c=d$.

For the subsequent cases we assume $k=i+1$.  This allows us to further
assume that $a=0$, since we can subtract off $a\, \beta_i(x_iy_{i+1})$.
Now suppose that we have strict inequality $k<n$.
As before, the $K[x,y,z]$-linearity of $\varphi$ gives 
$$\varphi(x_iy_ky_n) = d\,z_iz_ky_n = y_k\,\varphi(x_iy_n).$$
Specifically, $y_k$ divides the middle term.  Hence, $d=0$.  Similarly, $c=0$:
$$\varphi(x_iy_kz_kx_n) = c\,z_ix_kz_kx_n = y_k\,\varphi(x_iz_kx_n).$$
Suppose we further have the strict inequality $1<i$.  Then necessarily $b=0$:
$$\varphi(y_1z_ix_iy_k) = b\, y_1z_iy_iz_k = x_i\,\varphi(y_1z_iy_k).$$
However, if $i=1$ and $k=2$, we have that $\varphi(x_1y_2) = b\, \delta_1(x_1y_2)$.

The only case that remains is $k=n$ and $i=n-1$. Here, we can also assume
that $c=0$ by subtracting $c\, \delta_2(x_{n-1}y_n)$.  We will show that $d=0=b$
by once more appealing to the fact that $\varphi$ is a module homomorphism:
$$\varphi(x_1x_{n-1}y_n) = d\, x_1z_{n-1}z_n = x_{n-1}\, \varphi(x_1y_n),$$
which gives $d=0$.  This subsequently implies the desired $b=0$, because
$$\varphi(y_1x_iz_iy_n) = b\, y_1y_iz_iz_n = x_i\, \varphi(y_1z_iy_n).$$

This has finally put us in a position where we can assume that
$\varphi(x_iy_k)=0$ for all $1\le i < k \le n$. To finish the proof that $\varphi$
is  a linear combination of the $11n-15$ classes described above,
we need to examine what happens with the cubics.
Suppose $1\le i<j<k\le n$, and consider $\varphi(y_iz_jx_k)$.
This can be written as a linear sum of the 17 standard monomials
of multidegree $e_i+e_j+e_k$ which are not in $N_n$.
Explicitly, these standard monomials are:
$$\begin{array}{lllll}
x_ix_jx_k, & x_ix_jz_k, & x_iz_jz_k, & y_ix_jx_k,& y_ix_jz_k\\
y_iy_jx_k, & y_iy_jy_k, & y_iy_jz_k, & y_iz_jz_k,\\
z_ix_jx_k, & z_ix_jz_k, & z_iy_jx_k, & z_iy_jy_k,\\
z_iy_jz_k, & z_iz_jx_k, & z_iz_jy_k, & z_iz_jz_k.
\end{array}$$
By subtracting off multiples of the maps $\rho_j$, $\sigma_j$,
$\tau_j$, $\nu_j$, $\mu_j$, and $\pi_j$, we can assume that this is
a sum of the 11 monomials remaining after removing $y_ix_jx_k$,
$y_ix_jz_k$, $y_iz_jz_k$, $y_iy_jx_k$, $z_iy_jx_k$, and $z_iz_jx_k$.
However, now note  that
$$\varphi(x_iy_iz_jx_k) = x_i\,\varphi(y_iz_jx_k) = y_i\,\varphi(x_iz_jx_k).$$
This means that for every one of the 11 monomials $m$ appearing in the sum,
either $x_im=0$ or $y_i$ divides $m$. Similarly,
$$\varphi(y_iz_jx_ky_k) = y_k\,\varphi(y_iz_jx_k) = x_k\,\varphi(y_iz_jy_k),$$
and so either $y_km=0$ or $x_k$ divides $m$.
Taking these both into consideration actually kills every one
of the 11 possible standard monomials (we spare the reader the explicit check),
and hence we can assume that $\varphi(y_iz_jx_k)=0$.

Now consider what happens with $\varphi(x_iz_jx_k)$.  Indeed,
$$0=x_i\,\varphi(y_iz_jx_k) = \varphi(x_iy_iz_jx_k)=y_i\,\varphi(x_iz_jx_k).$$
So for every one of the 17 standard monomials $m$ which possibly appears
in the support of $\varphi(x_iz_jx_k)$ we must have that $y_im=0$
in $K[x,y,z]/N_n$. This actually leaves us with only two possible
such standard monomials -- namely $z_iz_jx_k$ and $z_iz_jy_k$.
We write $\varphi(x_iz_jx_k)=a\, z_iz_jx_k + b\, z_iz_jy_k$.

The fact that we assume $\varphi(x_iy_k)=0$ implies $a=0=b$.
This is because
$$0 = z_jx_k\, \varphi(x_iy_k) = \varphi(x_iz_jx_ky_k) = y_k\,\varphi(x_iz_jx_k).$$
To sum up, we have shown that, under our assumptions,
if $\varphi(y_iz_jx_k)=0$ holds then it also must be the case that
$\varphi(x_iz_jx_k)=0$.  We can prove in a similar manner that
$\varphi(y_iz_jy_k)=0$, and this finishes the proof that $\varphi$
can be written as a $K$-linear sum of the $11n-15$ classes of maps described.
\end{proof}

We reiterate that Theorem \ref{thm:component} fails for $n=2$, since 
$\mathcal{H}_2\simeq \PP^8$, and $\Gamma_2$ is a cubic hypersurface
cutting through $\mathcal{H}_2$. We offer a short report for $n=3$.

\begin{remark} \label{rmk:mysterious}
The Hilbert scheme $\mathcal{H}_3$ contains $13,824$ monomial ideals.
These come in $16$ symmetry classes under the action of $(S_3)^3\rtimes S_3$.
A detailed analysis of these symmetry classes and how we found the
$13,824$ ideals appears on the website
\texttt{www.math.washington.edu/$\sim$aholtc/HilbertScheme}.
For seven of the symmetry classes, the tangent space dimension
is less than ${\rm dim}(\Gamma_3) = 18$. From this
we infer that $\mathcal{H}_3$ has components other than~$\Gamma_3$.

We note that the number $13,824$ is exactly the number of monomial ideals
on $H_{3,3}$ as described in \cite{CS}. Moreover, the monomial ideals on $H_{3,3}$
also fall into $16$ distinct symmetry classes.
We do not yet fully understand the relationship between
$\mathcal{H}_n$ and $H_{3,n}$ suggested by this observation.

Moreover, it would be desirable to coordinatize 
the inclusion $\Gamma_3 \subset \mathcal{H}_3$ and to relate it
to the equations defining {\em trifocal tensors}, as seen in \cite{AT, Heyden}.
It is our intention to investigate this topic in a subsequent publication. 
\end{remark}

Our study was restricted to cameras that take $2$-dimensional
pictures of $3$-dimensional scenes. Yet, residents of {\em flatland} might be more interested
in taking $1$-dimensional pictures of $2$-dimensional scenes.
From a mathematical perspective, generalizing to arbitrary dimensions makes sense:
given  $n$ matrices of format $r \times s$ we get a map
from $\PP^{s-1} $ into $(\PP^{r-1})^n$, and one could study
the Hilbert scheme parametrizing the resulting varieties.
Our focus on $r=3$ and $s=4$
was motivated by the context of computer vision.


\begin{thebibliography}{99}
\bibliographystyle{plain}


\bibitem{AT} 
A.~Alzati and A.~Tortora: A geometric approach to the trifocal tensor,
{\em Journal of Mathematical Imaging and Vision} {\bf  38} (2010) 159--170.

 \bibitem{CHSW}
 D.~Cartwright, M.~H\"abich, B.~Sturmfels and A.~Werner:
 Mustafin varieties, {\em Selecta Mathematica}, to appear.

\bibitem{CS} D.~Cartwright and B.~Sturmfels: The Hilbert scheme of the diagonal in a product 
of projective spaces, {\em International Mathematics Research Notices} {\bf 9} (2010) 1741--1771.

\bibitem{Conca} A.~Conca:
Linear spaces, transversal polymatroids and ASL domains,
{\em Journal of Algebraic Combinatorics} {\bf 25} (2007) 25--41. 

\bibitem{CLO} D.~Cox, J.~Little and D.~O'Shea: {\em Ideals, Varieties and Algorithms},
Fifth edition,  Undergraduate Texts in Mathematics, Springer, New York,  2007.

\bibitem{E} D.~Eisenbud: {\em Commutative Algebra with a View Toward Algebraic Geometry},
Graduate Texts in Mathematics, Springer, New York, 1995.

\bibitem{FL} 
O.~Faugeras and Q-T.~Luong: {\em The Geometry of Multiple Images}, MIT Press, Cambridge, MA, 2001.

\bibitem{M2} D. R.~Grayson and M. E.~Stillman:
          Macaulay2, a software system for research
                   in algebraic geometry, Available at \texttt{http://www.math.uiuc.edu/Macaulay2/}
\bibitem{Grosshans}
F.~Grosshans: On the equations relating a three-dimensional object and its
two-dimensional images, {\em Advances in Applied Mathematics} {\bf 34} (2005) 366--392.

\bibitem{HaimanSturmfels} M.~Haiman and B.~Sturmfels: Multigraded Hilbert schemes, 
{\em Journal of Algebraic Geometry} {\bf 13} (2004) 725--769.

\bibitem{HartleyZisserman} 
 R.~Hartley and A~Zisserman: {\em  Multiple View Geometry in Computer Vision}, Second edition,
    Cambridge University Press, 2003.
 
\bibitem{HA} A.~Heyden and  K.~{\AA}str{\"o}m: Algebraic properties of multilinear constraints, 
{\em Mathematical Methods in the Applied Sciences} {\bf 20} (1997) 1135--1162.

\bibitem{Heyden} A.~Heyden: Tensorial properties of multiple view constraints,
{\em Mathematical Methods in the Applied Sciences} {\bf 23} (2000) 169--202.

\bibitem{CaTS} A.~Jensen: CaTS, a software system for toric state polytopes,
      Available at \texttt{http://www.soopadoopa.dk/anders/cats/cats.html}

\bibitem{Gfan} A.~Jensen: Gfan, a software system for Gr{\"o}bner fans and tropical varieties,
     Available at {\texttt{http://www.math.tu-berlin.de/$\sim$jensen/software/gfan/gfan.html}}.
     
\bibitem{TropicalBook} D.~Maclagan and B.~Sturmfels: {\em Introduction to Tropical Geometry}, draft of book 
available at {\texttt{http://www.warwick.ac.uk/staff/D.Maclagan/papers/papers.html}}.


\bibitem{MS} E.~Miller and B.~Sturmfels:
{\em Combinatorial Commutative Algebra}, Springer, New York, 2005.

\bibitem{Stanley} R.~Stanley: {\em Combinatorics and Commutative Algebra}, 
Progress in Mathematics, Birkh\"auser, Boston, 1996.

\bibitem{Sage} W.~Stein {\it et al}: Sage Mathematics Software (Version 4.7),
   The Sage Development Team, 2011, \texttt{http://www.sagemath.org}.

\bibitem{GBCP} B.~Sturmfels: {\em Gr\"obner Bases and Convex Polytopes},
University Lecture Series, American Mathematical Society, Providence, 1996.

\bibitem{SZ} B.~Sturmfels and A.~Zelevinsky:
Maximal minors and their leading terms, {\em Advances in Mathematics} {\bf 98} (1993) 65--112.

\end{thebibliography}
\end{document}